\documentclass[12pt]{article}

\usepackage[british]{babel}
\usepackage[babel]{csquotes}
\usepackage{amsmath}
\usepackage{amssymb}
\usepackage{amsthm}
\usepackage{amsfonts}
\usepackage{hyperref}   

\usepackage{enumitem}
\setenumerate[1]{label=(\textup{\alph*})}
\setenumerate[2]{label=(\textup{\roman*})}

\newcommand{\overbar}[1]{\mkern 1.5mu\overline{\mkern-1.5mu#1\mkern-1.5mu}\mkern 1.5mu}

\newtheorem{defi}{Definition}[section]
\newtheorem{expl}[defi]{Example}
\newtheorem{thm}[defi]{Theorem}
\newtheorem{lem}[defi]{Lemma}
\newtheorem{prop}[defi]{Proposition}
\newtheorem{cor}[defi]{Corollary}
\newtheorem{rem}[defi]{Remark}

\DeclareMathAlphabet{\mathdutchcal}{U}{dutchcal}{m}{n}
\SetMathAlphabet{\mathdutchcal}{bold}{U}{dutchcal}{b}{n}
\DeclareMathAlphabet{\mathdutchbcal}{U}{dutchcal}{b}{n}

\DeclareMathOperator\diam{diam}
\DeclareMathOperator*{\argmax}{argmax}

\DeclareMathOperator*{\inj}{inj}

\newcommand{\R}{\mathbb{R}}         
\newcommand{\C}{\mathbb{C}}         
\newcommand{\N}{\mathbb{N}}         
\newcommand{\bb}{\mathcal{B}}       
\newcommand{\olb}{\overbar{\bb}}   
\newcommand{\mcl}{\mathcal{M}}      
\newcommand{\hyp}{\mathbb{H}}       
\newcommand{\sph}{\mathbb{S}}       

\newcommand{\dd}{~\textup{d}}       
\newcommand{\eps}{\varepsilon}
\newcommand{\vphi}{\varphi}
\newcommand{\ncl}{\mathcal{N}}      
\newcommand{\wcl}{\mathcal{W}}      
\newcommand{\eh}{\frac{1}{2}}
\newcommand{\cdu}{\mathdutchbcal{c}}
\newcommand{\can}{{\operatorname{can}}}
\newcommand{\ecl}{\mathcal{E}}
\newcommand{\hcl}{\mathcal{H}}
\newcommand{\kcl}{\mathcal{K}}

\newcommand{\ind}{1\hspace{-0.098cm}\mathrm{l}}     
\newcommand{\indi}[1]{\,\ind_{\{#1\}}}              

\newcommand\prob[1]{\mathbb{P}\left(#1\right)}	    
\newcommand\expec[1]{\mathbb{E}\left[#1\right]}	    
\newcommand\var[1]{\mathbb{V}\left[#1\right]}	    
\newcommand\eqd{{~\stackrel{(d)}{=}~}}     		    

\newcommand\abs[1]{\left\lvert #1 \right\rvert}     
\newcommand\abrac[1]{\left\langle #1 \right\rangle} 
\newcommand\norm[1]{\left\Vert #1 \right\Vert}      

\newcommand{\ol}[1]{\overbar{#1}}

\newcommand{\wh}[1]{\widehat{#1}}

\newcommand{\ocl}{\mathcal{O}}
\newcommand{\odcl}{\mathdutchcal{o}}

\title{Persistence probabilities of fractional L\'evy fields indexed by hyperbolic space and other Riemannian manifolds}
\author{Frank Aurzada and  Max Helmer}
\date{\today}

\begin{document}

\maketitle 

\begin{abstract}
    We study the persistence probability of fractional L\'evy fields, i.e.\ the analogue of fractional Brownian motion with generalised (multi-dimensional) index sets. First, we compute the persistence exponent of the hyperbolic fractional L\'evy field. The result matches the rate obtained in \cite{Molchan99} for Euclidean space and the one in \cite{AurzadaHelmer2026} for the sphere. This enables us to study persistence for fractional L\'evy fields indexed by a large class of Riemannian manifolds (whenever that process exists) through a local comparison argument with the spherical and hyperbolic case.
\end{abstract}

\textbf{Keywords:} fractional Brownian motion; fractional L\'evy fields; Gaussian random fields; global supremum location; hyperbolic space; L\'evy’s hyperbolic Brownian motion; persistence probabilities; random fields indexed by Riemannian manifolds; Toponogov’s theorem

\textbf{Math subject classification (2020):} 60G22, 60G60 (primary); 60G15, 53B20 (secondary) 

\allowdisplaybreaks

\section{Introduction}

Persistence probabilities arise naturally in the broader study of stochastic processes under constraints: they describe the probability that a stochastic process or field remains below a prescribed level over an extended region of its index space. Persistence probabilities are highly sensitive both to the dependence structure of the field and to the geometry of the underlying index set. Explicit  results are typically difficult to obtain so that one focuses on the asymptotic (polynomial) decay rate which is governed by the persistence exponent. In this work, we study the interplay for fractional L\'evy fields indexed by hyperbolic space and more general Riemannian manifolds. The main question is whether the persistence exponent agrees with the exponent suggested by Euclidean fractional Brownian fields \cite{Molchan99}. This is a companion paper to \cite{AurzadaHelmer2026}, where the same question is studied for the sphere instead of the hyperbolic space.

We refer to the monograph \cite{MetzlerEtAl2014book} and the surveys \cite{BrayMajumdarSchehr2013, AurzadaSimon2015} for standard references on persistence probabilities. For (fractional) Gaussian fields we refer to \cite{LodhiaEtAl2016} for a survey. The main object of the present work is the fractional L\'evy field indexed by hyperbolic space, cf.\ \cite{Istas2005, Istas2012, Faraut1973}. This is a generalisation of the fractional Brownian motion defined by \cite{Kolmogoroff1940} and popularised by \cite{MandelbrotVanNess1968}. The naming convention \emph{L\'evy field} has been adopted from \cite{CohenLifshits2012}.

\paragraph*{\textit{Our framework.}} Let us first recall standard fractional Brownian motion on the real line.
Fractional Brownian motion is the centred, a.s.\ continuous Gaussian process $(B_H(t))_{t\in\R}$ with $B_H(0)=0$ a.s.\ and structure function
\begin{align*}
    \expec{(B_H(s) - B_H(t))^2} = \abs{t-s}^{2H}, \qquad s,t\in\R,
\end{align*}
where $H\in (0,1)$ is called the Hurst parameter. The case $H=1/2$ corresponds to Brownian motion.
Standard works on the topic are \cite{BiaginiEtAl2008book, Mishura2008book, Nourdin2012book}.

Substituting the index set for the multidimensional $\R^d$ and replacing $\abs{.}$ by $\norm{.}$, the Euclidean norm in $\R^d$, we obtain  fractional Brownian motion with multidimensional time, which is the centred, a.s.\ continuous Gaussian process also denoted by $(B_H(t))_{t\in\R^d}$, determined by $B_H(0)=0$ a.s.\ and by the structure function
\begin{align} \label{eqn:defeuclideanlf}
    \expec{(B_H(s) - B_H(t))^2} = \norm{t-s}^{2H}, \qquad s,t\in\R^d.
\end{align}
L\'evy was first to define this process \cite{Levy1940}, which is why the name \emph{fractional L\'evy field} or \emph{L\'evy Brownian field} (to denote $H=1/2$) has been adopted.

The focus in this work is on the persistence probability. Let us recall the main result for $(B_H(t))_{t\in \R^d}$ from \cite[Thm.\ 3]{Molchan99} (cf.\ introduction of \cite{Molchan2018} for the assumption on $K$): If $K$ is a bounded domain containing $0$ in its interior then 
\begin{align}\label{mol_classicalExpandingDomain}
    \prob{ \sup_{t\in TK} B_H(t) < 1} &= T^{-d+\odcl(1)}, &\text{as } T\to \infty,
\end{align}
where $T K := \{T s : s\in K\}$ is a linearly expanding domain. 

The fractional L\'evy field is self-similar with self-similarity parameter $H$, i.e.\ the processes $(B_H(c t))_{t\in\R^d}$ and $(\norm{c}^H B_H(t))_{t\in\R^d}$ possess the same finite dimensional distributions. Therefore, (\ref{mol_classicalExpandingDomain}) is equivalent to
\begin{align}\label{mol_dualFixedDomain}
    \prob{ \sup_{t\in K} B_H(t) < \eps } &= \eps^{\frac{d}{H} + \odcl(1)},  & \text{as } \eps\searrow 0.
\end{align}
The topic of this paper is to investigate more general index sets and to study the influence of the geometry of the index set on persistence. First, consider the sphere 
\begin{align*}
    \sph_{d} := \{\eta\in\R^{d+1} : \norm{\eta} = 1\}.
\end{align*}
Choose an arbitrary point $O\in\sph_d$. Then there exists a fractional L\'evy field for $0<H\leq 1/2$ with origin $O$, i.e.\ a centred, a.s.\ continuous Gaussian field $(S_H(\eta))_{\eta\in\sph_d}$ with $S_H(O)=0$ a.s.\ and  structure function
\begin{align}\label{eq_sphericalFractionalLevyFieldDef}
    \expec{(S_H(\eta) - S_H(\zeta))^2} &= d(\eta,\zeta)^{2H}, & \eta,\zeta\in\sph_d , 
\end{align}
where $d(.,.) := \arccos(\abrac{\eta,\zeta})$ is the shortest distance on the sphere and where $\abrac{.,.}$ denotes the usual Euclidean scalar product. Note that we use $d(.,.)$ in a generic way without a subindex when its meaning is evident from context. We generally write any metric $d(.,.)$ accompanied by parentheses to distinguish it from the fixed dimension $d$.

The existence of $(S_H(\eta))_{\eta\in\sph_d}$ in the Brownian case $H=1/2$ was first shown by L\'evy \cite{Levy1965book}. The existence in the other cases may be inferred from \cite[Sec.\ 5, Eq.\ (5.4)]{Gangolli1967}. A more modern and direct reference is \cite[Lem.\ 3.1]{Istas2005}. 
In contrast to the Euclidean case, such a field does not exist for $H>1/2$, which was shown in \cite[Thm.\ 2.1]{Istas2005}.
Since the sphere is compact, one studies the persistence exponent in the sense of \eqref{mol_dualFixedDomain} (rather than \eqref{mol_classicalExpandingDomain}). It was shown in \cite{AurzadaHelmer2026} that
\begin{align}\label{eq_sphericalPersistenceExponent}
    \prob{ \sup_{\eta\in \sph_d} S_H(\eta) < \eps } &= \eps^{\frac{d}{H} + \odcl(1)},  & \text{as } \eps\searrow 0,
\end{align}
so the geometry of the sphere does not affect the exponent.

The central object of this work is the fractional L\'evy field indexed by hyperbolic spaces. Define the \emph{hyperboloid}, a model of hyperbolic space, $\hyp_d$: If $\eta,\zeta\in\R^{d+1}$ and $\eta\circ \zeta := (\eta\circ\zeta):= -\eta_1 \zeta_1 + \eta_2 \zeta_2 + \ldots + \eta_{d+1} \zeta_{d+1}$ denotes the Lorentzian inner product (cf.\ \cite[Sec.\ §3.1]{Ratcliffe2019}) then the unit hyperboloid model $\hyp_d$ is given by
\begin{align}\label{eq_defHyperboloid}
    \hyp_d := \{ \eta\in\R^{d+1} : \eta_1 > 0, ~ \eta\circ \eta = -1\}
\end{align}
with the hyperbolic distance function given by $d(\eta,\zeta)= \operatorname{arccosh}(-\eta\circ \zeta)$, cf.\ \cite[Sec.\ §3.2]{Ratcliffe2019}.

Fix an arbitrary point $O\in \hyp_d$.  The fractional L\'evy field indexed by $\hyp_d$ is a centred, a.s.\ continuous Gaussian field $(X_H(\eta))_{\eta\in\hyp_d}$ with $X_H(O)=0$ a.s.\ and with structure function
\begin{align*} 
    \expec{(X_H(\eta) - X_H(\zeta))^2} &= d(\eta,\zeta)^{2H}, & \eta,\zeta\in\hyp_d, 
\end{align*}
in complete analogy to \eqref{eqn:defeuclideanlf} and \eqref{eq_sphericalFractionalLevyFieldDef}. 
Existence was proven in \cite{Faraut1973} in the case $H=1/2$. Using \cite[Lem.\ 2.1]{Istas2005} the existence of the corresponding fractional process follows for $0<H\leq 1/2$. The non-existence in the cases $H>1/2$ was shown in \cite[Thm.\ 4.1]{Istas2005}.

\paragraph*{\textit{Main results.}}
In our first main result we obtain the persistence exponent of the hyperbolic fractional L\'evy field on a bounded domain and find that the value is what we expect in analogy to \cite{Molchan99} and \cite{AurzadaHelmer2026}. 

\begin{thm}\label{thm_mainHyperbolic}
    For the hyperbolic fractional L\'evy field $(X_{H}(\eta))_{\eta\in\hyp_{d}}$ and a bounded set $K\subset \hyp_{d}$ with $O$ being any point in the interior of $K$ we have
    \begin{align} \label{eqn_mainHyperbolic}
        \prob{ \sup_{\eta\in K} X_{H}(\eta) < \eps}  =\eps^{\frac{d}{H} + \odcl(1)}, \qquad \text{ as } \eps\searrow 0.
    \end{align}
\end{thm}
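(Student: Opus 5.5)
We prove the two bounds $\eps^{d/H+\odcl(1)}$ from below and above separately. In both directions the idea is to transfer the known Euclidean \eqref{mol_dualFixedDomain} and spherical \eqref{eq_sphericalPersistenceExponent} results to $\hyp_d$ by working on small neighbourhoods of $O$, where the hyperbolic metric is locally almost Euclidean.

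\textbf{Lower bound.} Choose a small geodesic ball $B_r(O)\subset K$. We use the standard scheme for persistence lower bounds:
\begin{enumerate}
\item Apply Slepian's inequality, or rather the FKG/Gaussian correlation inequality, to decouple the region near $O$ from the rest of $K$.
\item On the rest of $K$ we only require $X_H<\eps$. The variance of $X_H$ is bounded there, so this requirement costs at most a factor that is polynomial in $\eps$ with small exponent. Concretely, we use a change of measure (Cameron--Martin shift) by a function $h$ in the RKHS that equals $-1$ away from $O$ and is small near $O$. By the Cameron--Martin formula this shift costs a factor $\exp(-\|h\|^2/2)$. A scaling argument with $h_\eps(\eta)=h(\eta/\eps^{1/H})$-type localisation shows that the cost is $\eps^{\odcl(1)}$.
\end{enumerate}
Near $O$ we then need
\[
\prob{\sup_{B_{\delta}(O)} X_H<\eps}\ge \eps^{d/H+\odcl(1)}.
\]
In exponential coordinates at $O$, the hyperbolic distance satisfies $\norm{x-y}\le d(\exp x,\exp y)\le (1+c\delta^2)\norm{x-y}$ on $B_\delta$. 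This suggests comparing $X_H\circ\exp$ with the Euclidean field via Slepian/Sudakov--Fernique. The comparison requires equal variances, however, and the variances $d(O,\eta)^{2H}=\norm{x}^{2H}$ do match. A cleaner route is to combine a direct comparison with the spherical case through \eqref{eq_sphericalPersistenceExponent}. Here Toponogov's theorem gives that hyperbolic distances dominate spherical ones on comparison triangles with the same two sides from $O$. Equal variances from $O$ together with larger increments lets Slepian's lemma (in the form $\prob{\sup X<\eps}\ge\prob{\sup Y<\eps}$ when $\mathbb E X_sX_t\ge \mathbb E Y_sY_t$) yield the lower bound from the spherical result restricted to a spherical cap.

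\textbf{Upper bound.} Here we use the Euclidean comparison in the opposite direction: the field with smaller increments, namely the Euclidean one, has larger covariances. Slepian then gives
\[
\prob{\sup_K X_H<\eps}\le \prob{\sup_{B} B_H<\eps},
\]
where $B$ is a Euclidean ball and $B_H$ is the fractional L\'evy field from \eqref{eqn:defeuclideanlf}. This needs $\operatorname{Cov}(X_H(\eta),X_H(\zeta))\le \operatorname{Cov}(B_H(x),B_H(y))$, i.e.\ $d_{\hyp}(\eta,\zeta)\ge\norm{x-y}$ under $x=\exp_O^{-1}\eta$. That inequality is exactly Toponogov (or the CAT($-1$) comparison) for the geodesic triangle $O,\eta,\zeta$ against its Euclidean comparison triangle with the same two sides and the same angle at $O$. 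The result \eqref{mol_dualFixedDomain} then gives $\eps^{d/H+\odcl(1)}$. The corresponding lower bound is obtained symmetrically from the sphere, as described above, since spherical distances are smaller.

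\textbf{Main obstacle.} The main difficulty is the lower bound's restriction to a neighbourhood of $O$. Comparison with the sphere only works inside a cap of radius less than $\pi$, whereas $K$ may be large. So the persistence event on all of $K$ must be obtained at subpolynomial cost. We would first try the Cameron--Martin shift argument of \cite{AurzadaHelmer2026}. If that fails, we would use self-similarity-free arguments based on the supremum location being near $O$, in the spirit of Molchan.
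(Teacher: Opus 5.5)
Your upper bound is correct and is the paper's argument: restrict to a ball $\bb_r(O)\subseteq K$, use Toponogov ($d(\eta,\zeta)\ge\norm{\exp_O^{-1}\eta-\exp_O^{-1}\zeta}$) with matched variances to get $\expec{X_H(\eta)X_H(\zeta)}\le\expec{B_H(x)B_H(y)}$, then apply Slepian and Molchan's Euclidean bound. The lower bound, however, has a genuine gap, because the sphere comparison runs in the wrong direction. Toponogov gives $d_{\sph}(\Psi(\eta),\Psi(\zeta))\le d(\eta,\zeta)$. With equal variances $d(O,\eta)^{2H}$ this means $\expec{X_H(\eta)X_H(\zeta)}\le\expec{\wh{X}_H(\Psi(\eta))\wh{X}_H(\Psi(\zeta))}$. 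Slepian therefore yields $\prob{\sup X_H<\eps}\le\prob{\sup \wh{X}_H<\eps}$, which is another \emph{upper} bound, exactly like the Euclidean one. Your own phrase ``larger increments'' already says that the hyperbolic field decorrelates faster and is less likely to stay below $\eps$. The form of Slepian you quote would need $\expec{X_sX_t}\ge\expec{Y_sY_t}$ with $X$ hyperbolic, and that is false. The Euclidean route fails for the same reason: $d\ge\norm{x-y}$ points the wrong way, and a rescaled Euclidean field matching $d\le(1+c\delta^2)\norm{x-y}$ no longer has equal variances. A Slepian lower bound needs a comparison space whose distances dominate the hyperbolic ones, i.e.\ curvature $\le -1$. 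By the scaling of Proposition~\ref{prop_selfSimRiemann}, that is the hyperbolic problem again on a larger ball, so no comparison argument can close it.

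This is why the paper proves the hyperbolic lower bound through the location of the supremum, which you mention only as a fallback. For $c$ large,
\[
\prob{\tau_{\olb_R(O)}\in\bb_{\eps^{1/H}}(O)}\le\prob{\sup_{\bb_R(O)}X_H<\eps\sqrt{-c\log\eps}}+\prob{\sup_{\olb_{\eps^{1/H}}(O)}X_H\ge\eps\sqrt{-c\log\eps}}.
\]
The last term is $\mathcal{O}(\eps^{M})$ for any $M$, by Dudley's bound on the expected supremum plus Borell--TIS. The real work, absent from your proposal, is showing that the left side is at least $\bar c\,\eps^{d/H}$. This takes three steps.
\begin{enumerate}
\item The argmax over $\olb_{R+\delta}(O)$ has a density with respect to the Riemannian measure. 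This comes from a Samorodnitsky--Shen packing argument that uses \emph{strong} stationarity of increments under the isometry group of $\hyp_d$ and a.s.\ uniqueness of the maximum.
\item The argmax puts positive mass in $\bb_\delta(O)$. This follows from $\prob{X_H\le 0 \text{ on } \olb_{R+\delta}(O)\setminus\bb_\delta(O)}>0$ together with the LIL along a geodesic through $O$.
\item Lebesgue differentiation gives $\xi\in\bb_\delta(O)$ with $\prob{\tau_{\olb_{R+\delta}(O)}\in\bb_{\eps^{1/H}}(\xi)}\ge\bar c\,\eps^{d/H}$. Since $\olb_R(\xi)\subseteq\olb_{R+\delta}(O)$, this bounds $\prob{\tau_{\olb_R(\xi)}\in\bb_{\eps^{1/H}}(\xi)}$ from below, and the isometry $\psi_\xi$ transfers that to $\prob{\tau_{\olb_R(O)}\in\bb_{\eps^{1/H}}(O)}$.
\end{enumerate}
Your ``main obstacle'' is also misidentified. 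Since $\inj(\hyp_d)=\infty$ and ball volumes scale polynomially at every finite radius, this argument works directly on any ball $\bb_R(O)\supseteq K$, so no Cameron--Martin decoupling of $K\setminus\bb_r(O)$ is needed. The difficulty lies entirely at the neighbourhood of $O$.
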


The proof uses a refinement of the comparison technique established in \cite{AurzadaHelmer2026}, the classical result from \cite{Molchan99}, and a new existence result for the density of the argmax for a general stationary increment field on homogeneous Riemannian manifolds given as Theorem~\ref{thm_densityArgmaxRiemannian} below. The latter is a generalization of the results in \cite{SamorodnitskyShen2013} from the real line to a class of manifolds.

Our second main result generalises Theorem~\ref{thm_mainHyperbolic} from the hyperbolic index set to $d$-dimensional connected Riemannian manifolds $\mcl$ without boundary. If $O\in\mcl$ is an arbitrary point then let the fractional L\'evy field on $\mcl$ be defined as $(X_H(\eta))_{\eta\in\mcl}$ with $0<H<1$, with $X_H(O)=0$ a.s.\ and with structure function
\begin{align*}
    \expec{(X_H(\eta)-X_H(\zeta))^2} = d(\eta,\zeta)^{2H}, \qquad\qquad \text{for all } \eta,\zeta\in \mcl,
\end{align*}
where $d(.,.)$ is the (geodesic) shortest distance function on the manifold $\mcl$. We suppose that such a field exists for $\mcl$. Then, for the range of Hurst parameters $0<H\leq  1/2$, we are able to obtain the persistence exponent.

\begin{thm}\label{thm_mainRiemannianManifold}
    For $0<H\leq 1/2$ let $(X_{H}(\eta))_{\eta\in\mcl}$ be the fractional L\'evy field on the $d$-dimensional connected Riemannian manifold $\mcl$ without boundary. For any relatively compact set $K\subset \mcl$ with $O$ in the interior of $K$ we have
    \begin{align*} 
        \prob{ \sup_{\eta\in K} X_{H}(\eta) < \eps}  =\eps^{\frac{d}{H} + \odcl(1)}, \qquad \text{ as } \eps\searrow 0.
    \end{align*}
\end{thm}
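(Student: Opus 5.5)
We split the claim into an upper bound on the probability (giving exponent at least $d/H$) and a lower bound (giving exponent at most $d/H$). Both come from local comparison with the model fields whose exponents are known: the Euclidean field via \eqref{mol_dualFixedDomain}, the spherical field via \eqref{eq_sphericalPersistenceExponent}, and the hyperbolic field via Theorem~\ref{thm_mainHyperbolic}. The geometric input is that on a small geodesic ball $U$ around $O$ with compact closure, the sectional curvature of $\mcl$ is bounded, say $\kappa_- \le \mathrm{sec} \le \kappa_+$. By Toponogov's theorem, or more simply by comparing the exponential map, distances in $U$ are then sandwiched between the corresponding distances in the model spaces of curvature $\kappa_+$ (a rescaled sphere) and $\kappa_-$ (a rescaled hyperbolic space). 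Rescaling a sphere or hyperboloid only multiplies the field by a constant, via $d\mapsto c\,d$ and $X\mapsto c^H X$, which does not change the exponent.

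\textbf{Upper bound.} Since $K\supseteq U$ for $U$ small enough, we have $\prob{\sup_K X_H<\eps}\le \prob{\sup_U X_H<\eps}$. Pick a point $\eta_0\in U$ and write $v=\exp_O^{-1}$. We compare $X_H$ on $U$ with the field $Y(\eta):=X^{\mathrm{model}}_H(\phi(\eta))$, where $\phi$ is the normal-coordinate chart identifying $U$ with a ball in the model space. The tool is a Slepian/Sudakov--Fernique-type inequality for increments, or rather the refined comparison technique from \cite{AurzadaHelmer2026} used in the proof of Theorem~\ref{thm_mainHyperbolic}. A naive Slepian argument fails because it needs equal variances, while here we only have $d_{\mcl}\le d_{\mathrm{model}}$ (or $\ge$) up to factors $1+O(r^2)$.

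To get around this, I would use the fact that the exponent is already known to be local. Persistence on a tiny ball of radius $r$ has the same exponent as on all of $K$; this is exactly the mechanism in \cite{AurzadaHelmer2026}, which proceeds through the density of the argmax, Theorem~\ref{thm_densityArgmaxRiemannian}, giving $\prob{\sup<\eps}\approx$ probability that the argmax lies near $O$. I would then insert a quasi-isometry comparison: if two fields on the same index set have structure functions with ratio in $[1-\delta,1+\delta]$, then their small-ball-type persistence exponents differ by $O(\delta)$. Letting $r\to0$ sends $\delta\to0$ and closes the argument.

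\textbf{Lower bound, and the main obstacle.} Here $H\le 1/2$ matters. For such $H$, $d^{2H}$ is a negative-definite kernel whenever $d$ is. I would represent the increment structure as a sum, $d_{\mcl}^{2H} = c\, d_{\mathrm{model}}^{2H} + \rho$ with $\rho$ conditionally negative definite on $U$. This makes $X_H$ equal in law to an independent sum $\sqrt{c}\,X^{\mathrm{model}}+Z$ with $Z(O)=0$, and hence
\[
\prob{\sup X_H<\eps}\ge \prob{\sup \sqrt c X^{\mathrm{model}}<\eps/2}\,\prob{\sup Z<\eps/2}
\]
by independence. The factor $\prob{\sup Z<\eps/2}$ is a small-deviation term. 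It is polynomial, with exponent of order $\delta\cdot d/H$, because $Z$ has structure function of size $\delta\, d^{2H}$; this follows from entropy bounds for small balls. We then extend from $U$ to $K$ using Slepian/FKG (Gaussian correlation) together with the fact that $\prob{\sup_{K\setminus U} X_H<1}$ is bounded below by a constant, after splitting $\eps$ appropriately.

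I expect the decomposition into model plus conditionally negative definite remainder, with a remainder that is quantitatively small as $r\to0$, to be the hardest step. I would first try to verify it using curvature expansions of $d^2$ in normal coordinates.
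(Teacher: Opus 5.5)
There is a genuine gap, and it comes from a false premise: you dismiss a direct Slepian comparison because of unequal variances, but in this setting the variances are exactly equal. The comparison map $\Psi_\kappa=\exp_{\mcl_\kappa,O}\circ\mathfrak{I}_O\circ\exp_{\mcl,O}^{-1}$ is built from exponential maps at the pinning point $O$, so it preserves $d(O,\cdot)$ exactly (see \eqref{eq_expMapDistancePreservingGeneralised}). Since $X_H(O)=0$, both $X_H(\eta)$ and the transported model field $\wh{X}_H(\Psi_\kappa(\eta))$ have variance $d(O,\eta)^{2H}$, with no $1+O(r^2)$ error. The sandwich you state yourself (Lemma~\ref{lem_rauchToponogovComparisonEstimate}) is an exact one-sided distance inequality. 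Through $\expec{X_H(\eta)X_H(\zeta)}=\frac12\bigl(d(O,\eta)^{2H}+d(O,\zeta)^{2H}-d(\eta,\zeta)^{2H}\bigr)$ it becomes an exact covariance inequality. So Lemma~\ref{lem_Slepian} applies directly in both directions:
\begin{enumerate}
\item for the upper bound, compare against a sphere of large curvature $\kcl$;
\item for the lower bound, compare against $\mcl_\kappa$ with $\kappa<0$.
\end{enumerate}
Each comparison is then followed by Proposition~\ref{prop_selfSimRiemann} and, respectively, \eqref{eq_sphericalPersistenceExponent} or Lemma~\ref{lem_hfbmLowerBound}. You also miss that \eqref{eq_sphericalPersistenceExponent} is known only for the whole sphere, not for caps. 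The paper therefore takes $r=\pi/\sqrt{\kcl}=\inj(\mcl_\kcl)$, so that $\Psi_\kcl$ maps $\bb_r^\mcl(O)$ onto the whole sphere minus the antipode of $O$; that point is then recovered by continuity.

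The replacements you propose do not work.
\begin{enumerate}
\item The claim that structure functions within a factor $1\pm\delta$ give persistence exponents within $O(\delta)$ is not an available result, and it is at least as hard as the theorem itself.
\item Theorem~\ref{thm_densityArgmaxRiemannian} cannot be used on a general $\mcl$. It requires a homogeneous manifold and strong stationary increments, and the paper uses it only on $\hyp_d$.
\item Your lower bound fails even if the decomposition $X_H\eqd\sqrt{c}\,X^{\mathrm{model}}+Z$ existed. Multiplying a field by a constant does not change its persistence exponent, since $\prob{\sup Z<\eps/2}=\prob{\sup Z/\sqrt{\delta}<\eps/(2\sqrt{\delta})}$. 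So the exponent $\theta_Z$ of $Z$ does not tend to $0$ with $\delta$; for $\rho\approx\delta\,d^{2H}$ one expects $\theta_Z=d/H$ again. Your product bound therefore gives exponent $d/H+\theta_Z$, roughly $2d/H$, instead of $d/H$.
\item Entropy bounds do not rescue this step. They control two-sided small-ball probabilities, which decay like $\exp(-c\,\eps^{-d/H})$, not one-sided persistence.
\item To pass from the small ball to $K$, you need $\prob{X_H\le 0 \text{ on } K\setminus\bb_r^\mcl(O)}>0$ (Lemma~\ref{lem_NonPersistenceOnDomainsWithoutZeroRiemann}), combined with Corollary~\ref{cor_SlepianPosCorr} at the same level $\eps$. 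A lower bound on $\prob{\sup_{K\setminus U}X_H<1}$ is not enough, and no splitting of $\eps$ is needed.
\end{enumerate}
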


The previous theorem hinges on the question of the existence of a fractional L\'evy field on a given manifold. This question is non-trivial in general. The next lemma gives an existence result for such fractional L\'evy fields \emph{locally} on a given \emph{$2$-dimensional} manifold under some additional regularity assumption. 

\begin{lem}\label{lem_mainExistence2dFBM}
    For any connected, $2$-dimensional Riemannian manifold without boundary and an arbitrary point $O$ on it, there exists a neighbourhood $U$ around $O$ with a fractional L\'evy field on $U$ for $0<H\leq 1/2$. 
\end{lem}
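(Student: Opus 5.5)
The plan is to reduce existence to a statement about the geodesic distance on a small neighbourhood. It suffices to show that $d(\cdot,\cdot)$ restricted to a suitable neighbourhood $U$ of $O$ is a \emph{conditionally negative definite} kernel, i.e.\ a metric of negative type. Granting this, Schoenberg's theorem gives that $d^{\alpha}$ is again conditionally negative definite for every $\alpha\in(0,1]$; we use this with $\alpha=2H\le 1$. Consequently
\begin{align*}
    C(\eta,\zeta):=\tfrac12\bigl(d(O,\eta)^{2H}+d(O,\zeta)^{2H}-d(\eta,\zeta)^{2H}\bigr), \qquad \eta,\zeta\in U,
\end{align*}
is positive semidefinite. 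This is the same mechanism as \cite[Lem.\ 2.1]{Istas2005}. A centred Gaussian field with covariance $C$ then exists by Kolmogorov's extension theorem; it vanishes at $O$ and has structure function $d^{2H}$.

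For continuity, I work in a chart around $O$, for instance normal coordinates on a relatively compact $U$. There $d$ is bi-Lipschitz equivalent to the Euclidean distance. Since the increments are Gaussian, $\mathbb{E}\abs{X_H(\eta)-X_H(\zeta)}^{p}\le c_p\, d(\eta,\zeta)^{pH}$ for every $p$. Choosing $pH>2$, the Kolmogorov--Chentsov criterion in dimension $2$ yields an a.s.\ continuous modification.

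The heart of the matter is the negative type of $d$, and here I use two-dimensionality through integral geometry (Crofton's formula). First, choose $U$ to be a small strongly convex geodesic ball around $O$, using Whitehead's theorem. On $U$ any two points are joined by a unique minimising geodesic contained in $U$, and the geodesic arcs through $U$ behave like a system of pseudo-lines: two distinct ones meet at most once.

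Next, I appeal to the Busemann--Pogorelov solution of Hilbert's fourth problem in dimension two, in its local form for a Riemannian metric. It provides a \emph{nonnegative} measure $\mu$ on the space $\mathcal{G}$ of geodesic arcs crossing $U$ such that
\begin{align*}
    d(\eta,\zeta)=\mu\{\gamma\in\mathcal{G}:\gamma \text{ separates } \eta \text{ and } \zeta\}.
\end{align*}
Concretely, $\mu$ is the second mixed derivative of $d$ along pairs of boundary points, and its nonnegativity follows from the triangle inequality applied to quadrilaterals. Each $\gamma$ cuts $U$ into two sides $A_\gamma$ and $U\setminus A_\gamma$. Then $d(\eta,\zeta)=\int \abs{\ind_{A_\gamma}(\eta)-\ind_{A_\gamma}(\zeta)}\,\mu(\mathrm{d}\gamma)$, so $d$ is an $L^1$ (cut) metric and therefore of negative type.

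This also gives an explicit construction for $H=1/2$: integrate $\ind_{A_\gamma}(\eta)-\ind_{A_\gamma}(O)$ against Gaussian white noise with control measure $\mu$, in the spirit of Chentsov's construction of L\'evy's Brownian field. The cases $H<1/2$ then follow from Schoenberg's theorem as described above.

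The main obstacle I expect is making the Crofton representation rigorous on $U$. One must ensure that the boundary parametrisation of geodesics through a convex ball is regular enough for $\mu$ to be well defined, and that $\mu$ is genuinely nonnegative rather than merely signed. The intended route is to pass to a smaller strongly convex ball whose boundary is strictly convex. Then the map sending a geodesic to its two endpoints on $\partial U$ is a diffeomorphism onto pairs of distinct boundary points. On that parametrisation I verify nonnegativity of $\partial_s\partial_t\, d(p(s),p(t))$ with the sign convention used for crossing chords; this nonnegativity is exactly the quadrilateral inequality $d(a,c)+d(b,d)\ge d(a,d)+d(b,c)$ for crossing geodesic chords, which holds by the triangle inequality.
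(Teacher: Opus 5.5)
Your argument is essentially the paper's proof. A Crofton-type measure on the geodesics of a small strongly convex neighbourhood of a surface expresses $d(\eta,\zeta)$ as the mass of geodesics separating (equivalently, crossing the segment between) $\eta$ and $\zeta$; white noise then gives the L\'evy Brownian field, with your cut-metric formulation replacing the paper's Pasch-axiom identity $\overline{\eta\zeta}=\overline{\eta\xi}\,\Delta\,\overline{\zeta\xi}$, and Schoenberg's theorem yields $0<H<1/2$.

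One caveat concerns your citation. The representation you need is the kinematic (Liouville) measure on geodesics of a Riemannian surface, which the paper takes from Santal\'o; the Busemann--Pogorelov result covers metrics whose geodesics are straight lines, which does not include a general Riemannian surface. Your quadrilateral-inequality check establishes only that $\mu$ is nonnegative. It does not give the identity $\mu\{\gamma \text{ separates } \eta,\zeta\}=d(\eta,\zeta)$ for interior points, and that is the part you still have to cite correctly or prove (e.g.\ via the first variation formula and Stokes in the boundary coordinates).
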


The result is based on ideas discussed in \cite{ChentsovMorozova1968} and \cite[Sec.\ 2]{VenetThesis2016}, where this is elaborated for a related class of manifolds. We provide a proof for Lemma~\ref{lem_mainExistence2dFBM} that is self-contained, except for one deeper geometric result, in the appendix.

\paragraph*{\textit{Outline of this article.}} We first give a brief overview of some related work and open problems in Section~\ref{sec_relatedWorkOpenProblems}. In Section~\ref{sec_argmaxDensity}, we lift a result on the existence of Lebesgue densities for the argmax of general stationary increment fields from \cite{SamorodnitskyShen2013} to a Euclidean multidimensional index setting. Then, in Section~\ref{sec_argmaxDensityRiemann}, we provide the analogous statement for stationary increment fields indexed by a class of Riemannian manifolds after a brief background on necessary concepts from Riemannian geometry. In Section~\ref{sec_persistenceHyperbolic}, we apply the results of the previous section to obtain the persistence exponent of the hyperbolic fractional L\'evy field and thus prove Theorem~\ref{thm_mainHyperbolic}. This allows us to infer the value of the persistence exponent of general fractional L\'evy fields indexed by Riemannian manifolds in Section~\ref{sec_persistenceGeneralRiemann}, which proves Theorem~\ref{thm_mainRiemannianManifold}. 

\subsection{Related work and open problems}\label{sec_relatedWorkOpenProblems}

\paragraph*{\textit{Further problems in the Euclidean case.}} The fundamental result \eqref{mol_classicalExpandingDomain} is accompanied by further ones and some longer standing open questions. In \cite[Thm.\ 1]{Molchan99} it was shown that
\begin{align}\label{mol_oneSided}
    \prob{ \sup_{t\in [0,T]} B_H(t) < 1} &= T^{-(1-H)+\odcl(1)}, &\text{as } T\to \infty.
\end{align}
Note that by simply having $0$ at the edge of the interval, instead of the interior, the Hurst parameter enters the persistence exponent. This is accompanied by the result in \cite{Molchan2018}, where it is shown that if $K$ is a $d$-dimensional bounded convex domain and if $K$ has a smooth boundary at zero, i.e.\ $0\in\partial K$ and there is a ball $\ol{\bb_r(t_0)}\subseteq K$ with some $t_0\in K$ then
\begin{align}\label{mol_smoothConvexBoundary}
    \prob{ \sup_{t\in T K} B_H(t) < 1} &= T^{-(d-H)+\odcl(1)}, &\text{as } T\to \infty.
\end{align}
If we write the exponent as $(d-1) + (1-H)$ and compare to the exponents in \eqref{mol_classicalExpandingDomain} and \eqref{mol_oneSided}, one may ask whether for domains like $K = [0,1]^k \times [-1,1]^{d-k}$ it is true that
\begin{align}\label{eq_molchanCojecture}
    \prob{ \sup_{t\in T K} B_H(t) < 1} &= T^{-(d - kH)+\odcl(1)}, &\text{as } T\to \infty.
\end{align}
This was conjectured by Molchan in \cite{Molchan2017} and is still an open problem.

\paragraph{\textit{The classical persistence problem in the hyperbolic case.}} Since hyperbolic space is non-compact, it makes sense to ask the classical question of persistence with respect to  an expanding domain, i.e.\ similar to the original result \eqref{mol_classicalExpandingDomain} by Molchan. Unfortunately, our technique cannot be applied in this case. For more details, cf.\ Remark~\ref{rem_hyperbolicDualProblem} below.

\paragraph{\textit{Existence and non-existence statements.}} After \cite{Levy1940} (cf.\ \cite{Levy1965book}) the question of existence of fractional L\'evy fields on Riemannian manifolds has been considered from a few different perspectives. For a spectral approach, cf.\ \cite{Gangolli1967, Faraut1973, Faraut1974, AskeyBingham1976} and \cite{Molchan1967, Molchan1979, CohenLifshits2012}. Geometric constructions can be found in \cite{Chentsov1957, ChentsovMorozova1968, TakenakaKuboUrakawa1981, Takenaka1987, Takenaka1991, Istas2006stable}. Other notable contributions also with a focus on non-existence statements are \cite{Istas2005, Istas2012, VenetThesis2016}. A related approach showing that self-similarity and stationary increments of a Gaussian field no longer uniquely characterize fractional L\'evy fields on Riemannian manifolds is \cite{Gelbaum2014}. There is also an interest in this topic from a purely non-stochastic perspective, cf.\ \cite{HjorthKokkendorffMarkvorsen2002}.

Therefore it is natural to ask the following question: Can (fractional) L\'evy fields be defined locally on a large class of higher dimensional Riemannian manifolds? To our knowledge, except for \cite{ChentsovMorozova1968}, no local existence questions have been considered and counterarguments to existence, except for \cite{Istas2012, FeragenLauzeHauberg2015} with $H=1$, use global properties. 

\paragraph*{\textit{The case $1/2<H<1$.}} It would be interesting to have an example for the existence of a fractional L\'evy field for $1/2<H<1$ on a Riemannian manifold that is not at least locally \emph{isometric} to Euclidean space. Such examples exist on trees, cf.\ \cite[Thm.\ 2.15]{Istas2012}. Only for $H=1$ we know that there exists none, cf.\ \cite[Thm.\ 1]{FeragenLauzeHauberg2015}.

\section{Lebesgue density of the argmax of real-valued stationary increment processes indexed by Euclidean space}\label{sec_argmaxDensity}

This section is a special case of Section~\ref{sec_argmaxDensityRiemann}. The arguments are essentially the same, but the technical details are more approachable in the Euclidean setting.

In this section, let $(X_t)_{t\in\R^d}$ be a real-valued separable stochastic process that has stationary increments 
and that attains its maximum on any compact set a.s.\ at a unique point. Then for any relatively compact set $A\subseteq \R^d$ the random variable
\begin{align*}
    \tau_{\ol{A}} := \argmax_{s \in \ol{A}} X_s,
\end{align*}
is a.s.\ well-defined. We write $\ol{A}$ for the closure of $A$. 

The goal of this section is to show that for open and bounded sets $A$, which are therefore relatively compact, the random variable $\tau_{\ol{A}}$ is absolutely continuous with respect to  the Lebesgue measure on $A$. The assumption of an a.s.\ unique maximum is unrestrictive in our context (cf.\ Corollary~\ref{cor_densityArgmaxHyperbolic} below) due to the statement from \cite[Lem.\ 2.6]{KimPollard1990}, which we quote as Lemma~\ref{lem_uniqueMaximum} below. Generalisations may be found in \cite{Arcones1992} and \cite{LopezPimentel2018}. Note that a $\sigma$\textit{-compact} metric space means that it is the countable union of compact sets. Euclidean space is obviously $\sigma$-compact and every other manifold discussed in this article will possess this property, as well.
\begin{lem}\label{lem_uniqueMaximum}
    Let $(E,d)$ be a $\sigma$-compact metric space and let $(X_t)_{t\in E}$ be a Gaussian process with a.s.\ continuous sample paths. If $\var{X_s - X_t} \neq 0$ for all $s\neq t$ in $E$ then the supremum of the process can a.s.\ never be attained at two different points.
\end{lem}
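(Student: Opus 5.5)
The plan follows Kim and Pollard: reduce to pairs of disjoint compact sets, and for each pair use a Gaussian regression argument that makes the event of a tie between the two suprema a null event.

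\emph{Reduction to compact pairs.} Since $(E,d)$ is $\sigma$-compact, write $E=\bigcup_n K_n$ with $K_n$ compact. If the supremum is attained at two distinct points $s\neq t$, then there is a rational $r>0$ with $d(s,t)>2r$. Cover each $K_n$ by finitely many closed balls of radius $r/2$ intersected with $K_n$; these pieces are compact. The point $s$ then lies in one such piece $A$ and $t$ in another piece $B$, and $A$, $B$ are disjoint. We obtain countably many disjoint compact pairs $(A,B)$. It therefore suffices to show, for fixed disjoint compact $A,B\subseteq E$, that
\begin{align*}
    \prob{\sup_{A} X = \sup_{B} X} = 0 .
\end{align*}
The global supremum being attained at two points forces this tie for some pair. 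We may also assume the suprema are finite, since the paths are continuous and $A$, $B$ are compact.

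\emph{Key step: the tie has probability zero.} Fix $t_0\in B$ and consider a possible tie $\sup_A X=\sup_B X=X_{t_1}$ with $t_1\in B$. Look at the centred increment field $Y_t:=X_t-X_{t_0}$, which is an equivalent problem up to shifting both suprema by the same amount. The argument I would try first is Kim--Pollard's. Fix a countable dense set $D\subseteq A$. For $s\in D$, decompose $X_s = Z_s + \beta_s \xi$, where $\xi$ is a one-dimensional Gaussian direction (e.g.\ $\xi=X_{s}-X_{t}$ for suitable points) and $Z$ is independent of $\xi$. Conditionally on the residual process, the map $\xi\mapsto \sup_A X-\sup_B X$ is monotone piecewise-affine. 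Its level set at $0$ is then a Lebesgue-null set in $\xi$, because $\xi$ has a nondegenerate law thanks to $\var{X_s-X_t}\neq 0$. Concretely, I would decompose everything with respect to $\xi:=X_{s^*}-X_{t^*}$ along a countable family of pairs $(s^*,t^*)\in D_A\times D_B$.

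\emph{Main obstacle.} The hard part is making the conditioning argument rigorous: the maximisers are random, so one must pass through countably many deterministic choices. The cleanest route I see is this. A tie at points $s\in A$, $t\in B$ with $\var{X_s-X_t}=\sigma^2>0$ implies, by continuity and compactness, that for some rational $\delta$ and pair of dense points $(s',t')$ close to $(s,t)$, the functional $\Phi=\sup_A X-\sup_B X$ vanishes. Here $\Phi$ is convex-Lipschitz in the Gaussian coordinates. One then invokes the standard fact (Tsirelson/Ylvisaker type) that the supremum of a continuous Gaussian process, and hence $\Phi$ after conditioning on the independent part orthogonal to $\xi$, has an atom-free distribution except possibly at a deterministic degenerate value. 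Finite unions over the countable family finish the proof.
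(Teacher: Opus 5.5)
Your reduction to countably many pairs of disjoint compacta is fine (the paper gives no proof of this lemma and simply cites Kim--Pollard). The gap is the key step: you do not establish $\prob{\sup_A X=\sup_B X}=0$ for a \emph{fixed} such pair.

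Write $\xi=X_{s^*}-X_{t^*}$, $\beta(u)=\operatorname{Cov}(X_u,\xi)/\var{\xi}$ and $Z_u=X_u-\beta(u)\xi$. Conditionally on $Z$, the map $f(x)=\sup_{A}(Z+\beta x)-\sup_{B}(Z+\beta x)$ is a difference of two convex functions. Their slopes lie in $[\inf_A\beta,\sup_A\beta]$ and $[\inf_B\beta,\sup_B\beta]$ respectively. For arbitrary disjoint compacta these intervals overlap, and then $f$ need not be monotone. For instance, if $\sup_A\beta>\sup_B\beta$ and $\inf_A\beta<\inf_B\beta$, then $f$ tends to $+\infty$ at both ends. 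Nothing in your argument prevents $f$ from vanishing on an interval. Even a monotone $f$ could do that; only \emph{strict} monotonicity makes $\{f=0\}$ null.

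Your fallback does not repair this. $\Phi=\sup_AX-\sup_BX$ is convex minus convex, not convex, and it is not the supremum of a Gaussian process. So Tsirelson/Ylvisaker-type atomlessness does not apply to it, and an exceptional atom at $0$ would be fatal anyway. After conditioning on the part orthogonal to $\xi$, you are back to asking whether the zero set of the one-variable function $f$ is null. Also, the claim that $\Phi$ vanishes ``for some pair $(s',t')$ close to $(s,t)$'' is vacuous, since $\Phi$ does not depend on $(s',t')$.

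The ingredients you gesture at ($\xi=X_{s^*}-X_{t^*}$ over countably many pairs) are the right ones. What is missing is a second localisation of $A$ and $B$ in the canonical metric $d_X(u,v)=\var{X_u-X_v}^{1/2}$, adapted to $\xi$. Take $\xi=X_{s_0}-X_{t_0}$ (centred if necessary) with $s_0\in A$, $t_0\in B$. Then $\beta(s_0)-\beta(t_0)=1$, and by Cauchy--Schwarz $\abs{\beta(u)-\beta(v)}\le d_X(u,v)/d_X(s_0,t_0)$. Hence, if the $d_X$-diameters of $A$ and $B$ sum to less than $d_X(s_0,t_0)$, then $\gamma:=\inf_A\beta-\sup_B\beta>0$.

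Now take suprema of $Z_s+\beta(s)y\ge Z_s+\beta(s)x+(\inf_A\beta)(y-x)$ over $s\in A$, and of $Z_t+\beta(t)y\le Z_t+\beta(t)x+(\sup_B\beta)(y-x)$ over $t\in B$. This yields $f(y)-f(x)\ge\gamma(y-x)$ for $x<y$, so $\{f=0\}$ has at most one point. Since $Z$ (continuous, as $\beta$ is) is independent of the non-degenerate $\xi$, Fubini gives probability $0$ for the tie.

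Consequently the countable family of pairs must be chosen to satisfy this smallness condition, not merely to be $d$-separated. That is possible because a.s.\ continuity of a Gaussian process forces $d_X$ to be continuous, hence uniformly continuous on each $K_n$, and $d_X(s,t)>0$ for $s\neq t$. So every $s\neq t$ lies in some pair of sufficiently small pieces satisfying the condition.
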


We use $\bb_r(t) := \{ s\in\R^d : \norm{t-s} < r\}$
to denote open balls with a centre $t$ of a radius $r$. 
The following geometric auxiliary lemma is key to the main theorem of this section. We will not give a proof here, but instead refer to the analogous statement in a generalised setting, cf.\ Lemma~\ref{lem_epsBallNumber_Riemann}.

\begin{lem}\label{lem_epsBallNumber}
    Let $r>0$ and let $0 < \eps < r$. There is a constant $\mathdutchbcal{c} > 0$ depending only on $d$, such that the number of disjoint Euclidean balls in $\R^d$ of radius $\eps$ that can fit into $\bb_r(0)$ is at least $\mathdutchbcal{c} ~ r^d ~ \eps^{-d}$.
\end{lem}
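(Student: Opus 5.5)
The plan is a lattice packing of small balls inside $\bb_r(0)$ followed by a volume count. By translation and scaling it suffices to produce the right number of centres. For $0<\eps<r$, consider the grid
\begin{align*}
    \Lambda_\eps := 2\eps\,\mathbb{Z}^d = \{2\eps k : k\in\mathbb{Z}^d\}.
\end{align*}
Two distinct points of $\Lambda_\eps$ are at distance at least $2\eps$. Hence the open balls $\bb_\eps(t)$, $t\in\Lambda_\eps$, are pairwise disjoint, and we only need to count the grid points $t$ with $\bb_\eps(t)\subseteq\bb_r(0)$, i.e.\ with $\norm{t}\le r-\eps$.

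To count them, I will use the cubes $Q_t := t+[-\eps,\eps)^d$, $t\in\Lambda_\eps$, which tile $\R^d$. Each $Q_t$ has diameter $2\eps\sqrt d$. So if $\norm{t}\le r-\eps$ fails, then $Q_t$ is disjoint from the ball of radius $r-\eps-2\eps\sqrt d$ about the origin. Consequently the admissible cubes cover $\bb_{\rho}(0)$ with $\rho := r-(1+2\sqrt d)\eps$, and comparing volumes gives
\begin{align*}
    \#\{t\in\Lambda_\eps : \norm{t}\le r-\eps\} \;\ge\; \frac{\omega_d\,\rho^d}{(2\eps)^d},
\end{align*}
where $\omega_d$ is the volume of the unit ball.

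The main obstacle is that $\rho$ may be small or negative when $\eps$ is comparable to $r$. I will split into two cases.

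\textbf{Case $\eps\le r/(2+4\sqrt d)$.} Here $\rho\ge r/2$, so the count is at least $\omega_d 4^{-d} r^d\eps^{-d}$.

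\textbf{Case $r/(2+4\sqrt d)<\eps<r$.} The single ball $\bb_\eps(0)\subseteq\bb_r(0)$ gives at least one ball. Moreover, $r^d\eps^{-d}<(2+4\sqrt d)^d$ in this range, so the count $1$ is at least $(2+4\sqrt d)^{-d}r^d\eps^{-d}$.

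Taking $\cdu := \min\{\omega_d4^{-d},(2+4\sqrt d)^{-d}\}$ then works in both cases, and this constant depends only on $d$. The same scheme should transfer to Lemma~\ref{lem_epsBallNumber_Riemann}, with the volume comparison replaced by bounds on the Riemannian volume of small balls.
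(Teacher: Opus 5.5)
Your proof is correct, but it takes a different route from the paper. The paper proves this lemma only through its Riemannian version, Lemma~\ref{lem_epsBallNumber_Riemann}. There one takes a \emph{maximal} family of disjoint $\eps$-balls inside $\bb_r(0)$. Maximality forces the concentric $2\eps$-balls to cover $\bb_{r-\eps}(0)$, and a volume comparison finishes the case $\eps<r/2$, with the trivial bound $N\geq 1$ for $\eps\geq r/2$.

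You instead build an explicit packing from the grid $2\eps\mathbb{Z}^d$ and count admissible centres via the tiling by half-open cubes. This is fully constructive and needs no maximality argument. The price is that it relies on the lattice structure of $\R^d$, and it gives a worse constant: in $\R^d$ the paper's argument yields $\cdu=4^{-d}$, while yours carries the extra factor $\omega_d$.

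The real advantage of the paper's route is that it uses only the triangle inequality and two-sided volume bounds for small balls. That is exactly why it carries over verbatim to Lemma~\ref{lem_epsBallNumber_Riemann}. Your closing remark is therefore too optimistic. On a general manifold there is no lattice whose cells tile the space, and transplanting a grid through a chart such as $\exp_O$ would need bi-Lipschitz control of that chart, not merely volume bounds on balls. The natural generalisation is the maximal-packing argument, not the lattice one.
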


We also require the following standard argument about the existence of Lebesgue densities. The (almost identical) proof of the more general statement made in Lemma~\ref{lem_ballInequalityImpliesSurfaceDensity} below may be found in the appendix.

\begin{lem}\label{lem_ballInequalityImpliesLebesgue}
    Let $X$ be a random variable in $\R^d$. Let $A\subseteq \R^d$ be an open set and suppose there exist constants $c_A>0$ and $\eps_A > 0$ (that may both depend on $A$), such that for all $0<\eps<\eps_A$ and all $x\in A$ we have
    \begin{align*}
        \prob{X\in \bb_\eps (x)} \leq c_A ~ \eps^d.
    \end{align*}
    Then the distribution of $X$ is absolutely continuous with respect to  the Lebesgue measure on $A$.
\end{lem}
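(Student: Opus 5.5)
We argue via a local density bound: the ball condition makes $\mu:=\mathbb{P}_X$ restricted to $A$ a measure with bounded upper density, hence absolutely continuous. Fix a Lebesgue null set $N\subseteq A$; we show $\mu(N)=0$. Since $A$ is open and Lebesgue measure is outer regular, for every $\delta>0$ there is an open set $U$ with $N\subseteq U\subseteq A$ and $\lambda(U)<\delta$, where $\lambda$ denotes Lebesgue measure. It suffices to show $\mu(U)\leq C\,\lambda(U)$ for a constant $C$ depending only on $c_A$ and $d$. Once this holds, $\mu(N)\leq C\delta$ for every $\delta>0$, so $\mu(N)=0$, which is the claimed absolute continuity on $A$.

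To compare $\mu(U)$ with $\lambda(U)$, we cover $U$ by small balls and apply the hypothesis to each ball. Decompose $U$ into countably many half-open dyadic cubes $Q_i$ (Whitney-type decomposition), and refine so that every cube has side length $s_i<\eps_A/\sqrt{d}$. A cube of side $s_i$ lies inside the ball $\bb_{\sqrt d\, s_i}(x_i)$ around its centre $x_i\in U\subseteq A$, and the radius $\sqrt d\, s_i$ is below $\eps_A$. The hypothesis then gives
\begin{align*}
\mu(Q_i)\leq \prob{X\in\bb_{\sqrt d s_i}(x_i)}\leq c_A\, d^{d/2} s_i^d = c_A d^{d/2}\lambda(Q_i).
\end{align*}
Summing over the disjoint cubes yields $\mu(U)\leq c_A d^{d/2}\lambda(U)$, so $C=c_A d^{d/2}$.

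The one point needing care is that the hypothesis is available only for balls centred in $A$ with radius below $\eps_A$. Dyadic cubes contained in $U\subseteq A$ have their centres in $A$, and the refinement keeps every radius below $\eps_A$. No uniformity beyond the given constants is needed, so nothing else can fail.

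Alternatively, one could use Vitali's covering lemma or the Lebesgue–Besicovitch differentiation theorem, which give $\limsup_{\eps\to0}\mu(\bb_\eps(x))/\lambda(\bb_\eps(x))<\infty$ everywhere on $A$ and hence $\mu\ll\lambda$ there. We prefer the dyadic-cube argument because it is elementary and, in the Riemannian version (Lemma~\ref{lem_ballInequalityImpliesSurfaceDensity}), adapts through charts in which geodesic balls are comparable to Euclidean ones.
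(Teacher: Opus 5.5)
Your proof is correct, but it reaches the conclusion by a somewhat different route from the paper's.

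The paper proves the Riemannian version (Lemma~\ref{lem_ballInequalityImpliesSurfaceDensity}, in the appendix) and calls the Euclidean case almost identical. Its argument runs through the $d$-dimensional Hausdorff measure. Using $\hcl^d=\alpha_\mcl\sigma$, it covers the null set $D$ directly by open sets $B_i$ with $\diam B_i<\eps_A$ and $\sum_i(\diam B_i)^d<\delta/c_A$. It then picks centres $\eta_i\in B_i\cap D$, which is how it ensures that the centres lie in $A$. Finally it applies a union bound to $B_i\subseteq\bb_{\diam B_i}(\eta_i)$.

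You instead use outer regularity to enclose $N$ in an open set $U\subseteq A$ of small Lebesgue measure. You then decompose $U$ into disjoint dyadic cubes, whose centres automatically lie in $U\subseteq A$. Because the cubes are disjoint, you get the clean quantitative statement $\mu(U)\le c_A d^{d/2}\lambda(U)$ for every open $U\subseteq A$, i.e.\ an explicit bound $c_A d^{d/2}$ on the density.

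What your route buys is that it is more elementary: it needs only outer regularity and the dyadic decomposition of open sets, not the identification of Lebesgue measure with Hausdorff measure. What the paper's route buys is that it is purely metric-measure-theoretic and transfers verbatim to a Riemannian manifold, where there are no cubes. Your version would need the chart argument you sketch. That requires working in small coordinate patches where geodesic balls are sandwiched between Euclidean balls of comparable radii and $\sigma$ is comparable to Lebesgue measure, which is fine but adds a layer of bookkeeping.
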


We can now prove the following theorem, which essentially follows the proof of Theorem 3.1 in \cite[pp.\ 3499]{SamorodnitskyShen2013} for stationary processes on the real line in a more generalised setting. In the proof of \cite[Lem.\ 2.8]{Shen2016} it was remarked that it analogously works for stationary increment processes. For different possible assumptions, cf.\ Remark~\ref{rem_generalisationArgmax} below. 

\begin{thm}\label{thm_densityArgmaxEuclidean}
    Let $(X_t)_{t\in\R^d}$ be a real-valued, separable stochastic process that has stationary increments and that attains its maximum on any compact set a.s.\ at a unique point. Then for any $T>0$ the random variable $\tau_{\olb_T(0)}:= \argmax_{s \in \olb_T(0)} X_s$ has a Lebesgue density in $\bb_T(0)$.
\end{thm}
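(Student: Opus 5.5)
We follow the Samorodnitsky--Shen argument and reduce everything to Lemma~\ref{lem_ballInequalityImpliesLebesgue}. It suffices to find, for each fixed $T>0$, constants $c>0$ and $\eps_0>0$ such that
\begin{align*}
    \prob{\tau_{\olb_T(0)}\in \bb_\eps(x)} \leq c\,\eps^d \qquad \text{for all } x\in\bb_T(0),\ 0<\eps<\eps_0 .
\end{align*}
In fact we only need this on the open sets $A_\delta:=\bb_{T-\delta}(0)$ with $\eps<\delta/2$, with constants depending on $\delta$. Absolute continuity on each $A_\delta$ then gives it on $\bb_T(0)=\bigcup_\delta A_\delta$, since a Lebesgue-null subset of $\bb_T(0)$ meets each $A_\delta$ in a null set.

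The key step is a shifting argument that exploits stationary increments. Fix $x\in A_\delta$ and $\eps<\delta/4$. We place in the ball $\bb_{\delta/2}(0)$ of radius $r=\delta/2$ a family of $N\ge \cdu\, r^d\eps^{-d}$ disjoint balls $\bb_\eps(h_i)$, using Lemma~\ref{lem_epsBallNumber}; here $\norm{h_i}<\delta/2$ and the $h_i$ are pairwise at distance at least $2\eps$. We then compare the argmax over $\olb_T(0)$ with the argmax over a shifted domain. Since $X$ has stationary increments, the process $(X_{t+h}-X_h)_t$ has the same law as $(X_t-X_0)_t$, and subtracting a constant does not change the argmax. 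Hence $\tau_{\olb_T(0)}+h$ has the same law as $\tau_{\olb_T(h)}$. This gives
\begin{align*}
    \prob{\tau_{\olb_T(0)}\in\bb_\eps(x)} = \prob{\tau_{\olb_T(h_i)}\in \bb_\eps(x+h_i)}.
\end{align*}
Separability justifies the use of finite-dimensional distributions for the sup and argmax functionals.

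Next we localise to a common domain. Since $\bb_\eps(x+h_i)\subset \bb_{T-\delta/4}(0)$, it lies inside both $\olb_T(h_i)$ and the smaller set $D:=\olb_{T-\delta/2}(0)\cup\dots$. To make this clean we take $D:=\bigcap_i \olb_T(h_i)\supseteq \olb_{T-\delta/2}(0)$. If the argmax over the larger set $\olb_T(h_i)$ lies in $\bb_\eps(x+h_i)\subset D$, then it is also the argmax over $D$, because a maximiser over a superset that lies in the subset maximises over the subset too, and the maximiser over $D$ is unique. Therefore
\begin{align*}
    \prob{\tau_{\olb_T(0)}\in\bb_\eps(x)} \le \prob{\tau_D\in\bb_\eps(x+h_i)} \qquad \text{for every } i.
\end{align*}
The balls $\bb_\eps(x+h_i)$, $i=1,\dots,N$, are disjoint. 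Summing over $i$ gives
\begin{align*}
    N\,\prob{\tau_{\olb_T(0)}\in\bb_\eps(x)}\le 1,
\end{align*}
so the probability is at most $\cdu^{-1}(\delta/2)^{-d}\eps^d$, which is what we needed.

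The main obstacle is the careful justification of the monotonicity step. We must check that $\tau_D$ is a.s.\ well-defined for the non-ball compact set $D$, which holds by the hypothesis of a unique maximum on compacts. We also need the event inclusion to hold almost surely despite only separability, handled by working on the a.s.\ event where all relevant argmaxes are unique. Everything else is bookkeeping of radii so that the shifted small balls stay inside $D$.
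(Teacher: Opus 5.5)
Your argument is the paper's proof. It uses the same packing via Lemma~\ref{lem_epsBallNumber} and the same shift identity $\tau_{\olb_T(h)}\eqd\tau_{\olb_T(0)}+h$, and your $x$, $h_i$ are exactly the paper's $t$, $s_i-t$. The one difference is how you get disjointness. You dominate each event by $\{\tau_D\in\bb_\eps(x+h_i)\}$ for a single common compact $D$, whereas the paper shows the pairwise intersections $\Omega_{s_i,s_j}$ are null by comparing maxima. Yours is a slightly cleaner direct version of the same idea, and you could even take $D=\olb_{T-\delta/2}(0)$.

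One bookkeeping fix is needed. The inclusion $\bb_\eps(x+h_i)\subset\bb_{T-\delta/4}(0)$ that you state does not give containment in $D\supseteq\olb_{T-\delta/2}(0)$. Use instead $\norm{h_i}\le\delta/2-\eps$, which follows from $\bb_\eps(h_i)\subseteq\bb_{\delta/2}(0)$. This gives $\bb_\eps(x+h_i)\subseteq\bb_{T-\delta/2}(0)\subseteq D$, as required.
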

\begin{proof}
    Choose $0 < \delta < T/2$. We are going to show that for any $t\in\bb_{T-\delta}(0)$ and any $0< \eps < \delta/2$ the inequality
    \begin{align}\label{eq_argmaxDensityUpperBound}
        \prob{\tau_{\olb_T(0)}\in \bb_\eps(t)} \leq \mathdutchbcal{c}^{-1} ~ \eps^d ~ 2^d ~ \delta^{-d}
    \end{align}
    holds. Here, $\mathdutchbcal{c}$ is the constant given in Lemma~\ref{lem_epsBallNumber}.
    Then we will be able to apply Lemma~\ref{lem_ballInequalityImpliesLebesgue} with $A:=\bb_{T-\delta}(0)$, with $c_A:=\mathdutchbcal{c}^{-1} ~ 2^d ~ \delta^{-d}$ and with $\eps_A:=\delta/2$ to conclude that the argmax has a Lebesgue density on $\bb_{T-\delta}(0)$. Since we can choose $\delta$ arbitrarily small, we will obtain the existence of a Lebesgue density on the whole $\bb_T(0)$.

    We are going to proceed by contradiction. Suppose that there is a $\delta < T/2$, a $t\in\bb_{T-\delta}(0)$ and a $0< \eps < \delta/2$, such that \eqref{eq_argmaxDensityUpperBound} fails. According to Lemma~\ref{lem_epsBallNumber}, we can find points $s_i$ with $i=1, \ldots, \lceil \cdu (\delta/2)^d \eps^{-d} \rceil$, such that all balls $\bb_\eps(s_i)$ are disjoint subsets of $\bb_{\delta/2}(t)$. At the same time, we have for all $i,j\in\{1, \ldots, \lceil \cdu (\delta/2)^d \eps^{-d} \rceil\}$ that
    \begin{align}\label{eq_ballInclusion}
        \bb_\eps(s_i) \subseteq \bb_T(s_j - t),
    \end{align}
    since for any $w\in\bb_\eps(s_i)$ we can calculate that
    \begin{align*}
        \norm{w-(s_j-t)}\leq \norm{w-s_j} + \norm{t} \leq \norm{w-t}+\norm{t-s_j}+\norm{t}< \frac{\delta}{2} + \frac{\delta}{2} + (T-\delta) = T.
    \end{align*}
    To obtain information about the distribution of the argmax we use the separability and the stationary increments of the process to obtain for any $u\in\R^d$ that
    \begin{align}\label{eq_EuclideanArgmaxShiftStaionaryIncr}
        \tau_{\olb_T(0)} = \argmax_{s\in \olb_T(0)} (X_s) = \argmax_{s\in \olb_T(0)} (X_s - X_u) \eqd \argmax_{s\in \olb_T(0)} (X_{s-u}) = \argmax_{s\in \olb_T(-u)} (X_{s}) + u = \tau_{\olb_T(-u)} + u.
    \end{align}
    In particular, for $u:= t-s_i$ we may infer that
    \begin{align*}
        \prob{\tau_{\olb_T(s_i - t)}\in \bb_\eps(s_i)} = \prob{\tau_{\olb_T(0)}- (t-s_i)\in \bb_\eps(s_i)} = \prob{\tau_{\olb_T(0)}\in \bb_\eps(t)}
    \end{align*}
    for every $i=1, \ldots, \lceil \cdu (\delta/2)^d \eps^{-d} \rceil$. Furthermore, for $i\neq j$, we will check that these events may only intersect on a set of measure zero. For this purpose, define the events
    \begin{align*}
        \Omega_{s_i,s_j} & := \{ \tau_{\olb_T(s_i - t)}\in \bb_\eps(s_i), ~\tau_{\olb_T(s_j - t)}\in \bb_\eps(s_j) \}. 
    \end{align*}
    We show that this is a set of measure zero for all $i\neq j$. The set
    \begin{align}\label{eq_OmegaSiSjEuklidisch}
        \Omega_{s_i,s_j} \cap \{ X_{\tau_{\olb_T(s_i-t)}} < X_{\tau_{\olb_T(s_j-t)}} \}
    \end{align}
    must be empty, since by the definition of $\Omega_{s_i,s_j}$ and by \eqref{eq_ballInclusion} we would have
    \begin{align*}
        \tau_{\olb_T(s_j-t)} \in \bb_\eps(s_j) \subseteq \olb_T(s_i-t),
    \end{align*}
    which would imply that $X_{\tau_{\olb_T(s_i-t)}} \geq X_{\tau_{\olb_T(s_j-t)}}$, contradicting the condition in the second set in \eqref{eq_OmegaSiSjEuklidisch}. For the same reason 
    \begin{align*}
        \Omega_{s_i,s_j} & \cap \{ X_{\tau_{\olb_T(s_i-t)}} > X_{\tau_{\olb_T(s_j-t)}} \}
    \end{align*}
    must be the empty set, as well. Using the a.s.\ uniqueness of the maximum,
    \begin{align*}
        \Omega_{s_i,s_j} & \cap \{ X_{\tau_{\olb_T(s_i-t)}} = X_{\tau_{\olb_T(s_j-t)}} \}
    \end{align*}
    must be a set of measure zero. Thus, we conclude that $\Omega_{s_i,s_j}$ is a set of measure zero for all $i\neq j$.

    Finally, we apply our knowledge about the sets $\Omega_{s_i,s_j}$, our assumption that \eqref{eq_argmaxDensityUpperBound} fails and the assumptions about our constants to obtain
    \begin{align*}
        1 & \geq \prob{\bigcup_{i=1}^{\lceil \cdu (\delta/2)^d \eps^{-d} \rceil} \{\tau_{\olb_T(s_i-t)} \in \bb_\eps(s_i) \} } \\
        & = \sum_{i=1}^{\lceil \cdu (\delta/2)^d \eps^{-d} \rceil} \prob{ \tau_{\olb_T(s_i-t)} \in \bb_\eps(s_i) } \\
        & = \sum_{i=1}^{\lceil \cdu (\delta/2)^d \eps^{-d} \rceil} \prob{ \tau_{\olb_T(0)} \in \bb_\eps(t) } \\
        & > \cdu ~ (\delta/2)^d ~ \eps^{-d} ~ \cdu^{-1} ~ \eps^d ~ 2^d ~ \delta^{-d} \\
        & =  1,
    \end{align*}
    which is a contradiction. Thus, \eqref{eq_argmaxDensityUpperBound} holds and the theorem is proved.
\end{proof}

\begin{rem}
    Note that the Lebesgue density in Theorem~\ref{thm_densityArgmaxEuclidean} might be zero on all of $\bb_T(0)$ and that the mass of $\argmax$ might thus be concentrated on the boundary of $\olb_T(0)$. An easy example for this is the Gaussian process $(t~ Y)_{t\in\R}$, where $Y \sim \ncl(0,1)$ (i.e.\ fractional Brownian motion on the real line with $H=1$). Then $\argmax_{s\in [-T,T]} s Y = \indi{Y>0} T + \indi{Y<0} (-T)$ almost surely.
\end{rem}

\begin{rem}\label{rem_generalisationArgmax}
    (a) Theorem~\ref{thm_densityArgmaxEuclidean} also holds for separable \emph{stationary} processes $(X_t)_{t\in\R^d}$ with an a.s.\ unique maximum. The only part where we required the stationary increments is \eqref{eq_EuclideanArgmaxShiftStaionaryIncr} and this can be readily proved under the assumption of stationarity. 

    (b) If instead of an a.s.\ unique maximum we demand upper semi-continuous sample paths then the $\argmax$ is not defined as above, but one may introduce a strict order on the components of $\R^d$ and thus still get a well-defined $\tau_{\ol{A}}$.
    Thus, the same argument will show that $\tau_{\olb_T(O)}$ possesses a Lebesgue density in $\bb_T(O)$ for any $T>0$ (cf.\ \cite[pp. 3499]{SamorodnitskyShen2013}).
\end{rem}

\section{Existence of the argmax-density on homogeneous Riemannian manifolds}\label{sec_argmaxDensityRiemann}

We are now going to apply the same proof technique as in the last section in a further generalised setting. One key assumption is that the process is indexed by a homogeneous Riemannian manifold and we are going to explain what the notion of stationary of increments means in this context. We start with a brief overview of the necessary background from Riemannian geometry.

\subsection{Brief background from differential and Riemannian geometry}\label{sec_RiemannianBackground}

As our main references for Riemannian geometry we are going to use \cite{Chavel2006} and \cite{Petersen2016}. For an introduction to the topic of manifolds and differential geometry in general, cf.\ \cite{Lee2012book}. We provide a quick overview of concepts from differential geometry following the definitions in \cite[Sec.\ 1-3]{Lee2012book}.

A \emph{($d$-dimensional) topological manifold} $\mcl$ is a Hausdorff space with a second countable base so that for every $\eta\in\mcl$ there is an open subset $U\subseteq\mcl$ with $\eta\in U$, an image set $\wh{U}\subseteq\R^d$  and a homeomorphism $\vphi:U\to \wh{U}$. Such a pair $(U,\vphi)$ (with $\wh{U}:=\vphi(U)$) is called a chart. A map between Euclidean spaces is called \emph{smooth} if it has continuous partial derivatives of all orders. Two charts $(U,\vphi)$, $(V, \psi)$ are called \emph{smoothly compatible}, if $U\cap V=\emptyset$ or if $\psi\circ\vphi^{-1}:\vphi(U\cap V)\to\psi(U\cap V)$ is a smooth map. A \emph{maximal smooth atlas} for a topological manifold $\mcl$ is a collection of charts $(U_a,\vphi_a)_{a\in I}$ so that the domains $(U_a)_{a\in I}$ cover $\mcl$, so that all charts are smoothly compatible, and so that all charts smoothly compatible with every chart in the atlas are already part of the atlas. A topological manifold $\mcl$ together with a maximal smooth atlas is called a \emph{smooth / differentiable / $\mathcal{C}^\infty$} manifold. This definition implies that there is no boundary $\partial \mcl$.

A map $f:\mcl\to\R^k$ is \emph{smooth}, if for every $\eta\in\mcl$ there is a chart $(U,\vphi)$ of $\mcl$ with $\eta\in U$ so that $f\circ\vphi^{-1}:\vphi(U)\to\R^k$ is smooth. A map $F:\mcl\to\ncl$ between smooth manifolds is \emph{smooth} if for every $\eta\in\mcl$ there is a chart $(U,\vphi)$ with $\eta\in U$ and a chart $(V,\psi)$ with $F(\eta)\in V$ so that $F(U)\subseteq V$ and so that $\psi\circ F\circ \vphi^{-1}:\vphi(U)\to\psi(V)$ is smooth. A smooth function from a real interval $[a,b]$ into a smooth manifold, where we only consider one-sided derivatives at the edges of the interval, is called a \emph{smooth curve}. 

The class of real-valued smooth functions is denoted by $\mathcal{C}^\infty(\mcl)$. A linear map $v:\mathcal{C}^\infty(\mcl)\to\R$ is called a \emph{derivation at $\eta$} if it satisfies $v(f g) = f(\eta) v(g)+g(\eta)v(f)$ for all $f,g\in\mathcal{C}^\infty(\mcl)$. The set of all derivations at $\eta$ is denoted by the tangent space $T_\eta\mcl$ and may be identified as a vector space with $\R^d$. One may see tangent spaces are the generalisation of directional derivatives to manifolds.

Following \cite{Chavel2006}, all manifolds $\mcl$ considered will be $d$-dimensional, smooth, connected and without boundary. Unless explicitly stated otherwise, all arguments will be applicable in any dimension.

Let $\mcl$ be a differentiable manifold, such that for each $\eta\in\mcl$ there exists a real-valued scalar product $\abrac{.,.}_\eta$ on the tangent space $T_\eta \mcl$. If these scalar products depend smoothly on $\eta\in\mcl$ this collection of scalar products, written as $\abrac{.,.}_\mcl$, is called a \textit{Riemannian metric}, cf.\ \cite[§I.5]{Chavel2006}. It induces the norm $\norm{t}_\mcl := \sqrt{\abrac{t,t}_\mcl}$ for $t\in T_\eta\mcl$ for any $\eta\in\mcl$. The tuple $(\mcl,\abrac{.,.}_\mcl)$ is called a \emph{Riemannian manifold}. We will often omit the Riemannian metric and just call $\mcl$ the Riemannian manifold.

The Riemannian metric allows one to make sense of length of curves on manifolds. Let $\gamma:[a,b]\to\mcl$ be a piecewise smooth curve, which is to say that there exists a $k\in\N$ and $a=a_0<a_1<\ldots<a_k=b$ so that for for each $i=1,\ldots,k$ the restriction $\gamma|_{[a_{i-1}, a_i]}$ defines a smooth curve. Then to any $t_0\in[a,b]$, where $\gamma$ is smooth, there is a \emph{velocity of $\gamma$ at $t_0$} in $T_{\gamma(t_0)}\mcl$, i.e.\ a generalised directional derivative, associated with it and denoted by $\gamma'(t_0)$. We use this to define
\begin{align}\label{eq_curveLengthDef}
    L(\gamma) := \int_a^b \norm{\gamma'(t)}_\mcl \dd t
\end{align}
as the \textit{length} of $\gamma$. 

A Riemannian manifold possesses a natural \emph{connection} between its tangent spaces determined by the Riemannian metric. This allows one to follow the velocity vectors along a curve through different tangent spaces. In this sense one defines a \emph{geodesic} to be a smooth curve with constant velocity vector. In particular, if $\gamma$ is a geodesic then there is a $c>0$ so that $\norm{\gamma'(t)}_\mcl \equiv c$ for all times $t$ where it is defined. Through standard ODE theory (cf.\ \cite[Sec.\ §I.3]{Chavel2006}) one obtains that a geodesic is always (at least locally) uniquely determined by a point $\eta\in\mcl$ together with a direction $t\in T_\eta\mcl$. In Euclidean space, a geodesic is just a straight line. On the sphere, geodesics follow the trace of the great circles (cf.\ Example~\ref{expl_shortestDistancesAndIsometriesModelSpaces}).

Let $\mathcal{C}(\eta,\zeta)$ be the set of piecewise smooth curves $\gamma$ defined on some non-empty interval $[a,b]$ with $\gamma(a)=\eta$ and $\gamma(b)=\zeta$. The \textit{geodesic distance} or \textit{minimal distance} for points $\eta,\zeta\in\mcl$ is now defined as
\begin{align*}
    d(\eta,\zeta) := \inf_{\gamma\in\mathcal{C}(\eta,\zeta)} L(\gamma)
\end{align*}
The space $\mcl$ is a metric space with the geodesic metric $d(.,.)$. The (open) \textit{geodesic distance balls} $\bb_r(\eta)$ for $r>0$ and $\eta\in\mcl$ are defined by
\begin{align*}
    \bb_r(\eta):= \{\xi\in\mcl : d(\eta,\xi) < r\}.
\end{align*}
When the space is ambiguous we may write $\bb_r^\mcl(\eta)$ to make it precise.

A fundamental concept in Riemannian geometry is the exponential map (cf.\ \cite[Sec.\ 5.5.1]{Petersen2016} or \cite[Thm.\ I.3.2]{Chavel2006}). For any $\eta\in\mcl$ this is a function
\begin{align}\label{eq_expMapDef}
    \exp_\eta: T_\eta \mcl \to\mcl
\end{align}
that maps the tangent space in a natural way to the manifold: If $t\in T_\eta\mcl$ then there is a geodesic $\gamma_t$ starting in $\eta$ and with constant velocity vector $t$. The value of the exponential map $\exp_\eta$ at $t$ is then $\gamma_t(1)$. In other words, $\exp_\eta(t)$ is the point one arrives at if one follows the direction $t$ starting in $\eta$ in a straight line for a unit time interval.

Key is the \textit{injectivity radius} $\inj(\eta)$ (or $\inj_\mcl(\eta)$ to avoid ambiguity), cf.\ \cite[p. 118]{Chavel2006} or \cite[Sec.\ 5.7.4]{Petersen2016}, which is the supremum of all radii $r>0$ such that $\exp_\eta |_{\bb_r^{T_\eta\mcl}(\eta)}$ is an injective function. For $r < \inj(\eta)$ the image is $\bb_r^\mcl(\eta)$ and, in particular, for any $t\in \bb_r^{T_\eta\mcl}(0)$ we have
\begin{align}\label{eq_expMapDistancePreserving}
    \norm{t}_\mcl = d(\eta, \exp_\eta(t)).
\end{align}
This identity will be applied often throughout this work. The injectivity radius $\inj(\eta)$ is continuous as a function on $\mcl$ and non-zero at any point $\eta\in\mcl$ (cf.\ \cite[Thm.\ III.2.3]{Chavel2006}). For sets $K\subseteq \mcl$ we define $\inj(K):=\inf_{\eta\in K} \inj(\eta)$. 

We are going to need to refer to a certain class of manifolds, which we obtain in a natural way from $\sph_d$ and $\hyp_d$: The sphere of radius $r$ is defined as
\begin{align}\label{eq_defSphereWithRadiusAndDistance}
    \sph_d(r) &:= \{ \eta \in \R^{d+1} : \norm{\eta}^2 = r^2\},
\end{align}
with $d(\eta,\zeta) := r \arccos\left(\frac{1}{r^2} \abrac{\eta,\zeta}\right)$ for $\eta,\zeta\in\sph_d(r)$.

The (unit) hyperboloid was defined in \eqref{eq_defHyperboloid}. Analogously we may define the hyperboloid of (imaginary) radius $r$ as
\begin{align}\label{eq_defHyperboloidWithRadiusAndDistance}
    \hyp_d(r) &:= \{ \eta\in\R^{d+1} : \eta_1 > 0, ~ \eta\circ \eta = -r^2\},
\end{align}
with $d(\eta,\zeta):= r \operatorname{arccosh}\left(- \frac{1}{r^2} ~ \eta\circ\zeta \right)$ for $\eta,\zeta\in\sph_d(r)$.

We can now define the \textit{model spaces} $\mcl_\kappa$ for any $\kappa\in\R$ as
\begin{align}\label{eq_defModelSpace}
    \mcl_\kappa :=
    \begin{cases}
        \sph_d(1/\sqrt{\kappa}) & \text{ for $\kappa > 0$},\\
        \R^d & \text{ for $\kappa = 0$}, \\
        \hyp_d(1/\sqrt{-\kappa}) & \text{ for $\kappa < 0$}.
    \end{cases}
\end{align}

The fact that we chose $1/\sqrt{\kappa}$ for the radii stems from the notion of sectional curvature in Riemannian geometry (cf.\ \cite[§II.1]{Chavel2006}). For more on the derivation of the distance function on the model spaces, cf.\ \cite[§II.3]{Chavel2006}. The injectivity radii on any of these spaces are identical at every point of the respective space. In particular, 
\begin{align}\label{eq_modelSpace_injRadius}
    \inj(\mcl_\kappa) =
    \begin{cases}
        \frac{\pi}{\sqrt{\kappa}} & \text{ for $\kappa > 0$},\\
        \infty & \text{ for $\kappa \leq 0$}.
    \end{cases}
\end{align}

On a Riemannian manifold there exists a \textit{Riemannian measure}, which we will call $\sigma$. Roughly speaking, it measures the Riemannian volume by mapping it into Euclidean space and taking the Lebesgue-measure with respect to  a homeomorphic transformation whose Jacobian is defined through the Riemannian metric $\abrac{.,.}_\mcl$. This is independent of the chart taken. For a detailed and elaborate construction and many properties, cf.\ \cite[Sec.\ XII]{AmannEscher2009book}. Alternatively, \cite[Sec.\ §III.3]{Chavel2006} and \cite[Thm.\ 2]{Christensen1970} provide essential statements.

We require the following statement about ball volumes with respect to  the Riemannian measure.

\begin{lem}\label{lem_ballVolumeBoundsGeneralRiemann}
    Let $\mcl$ be a Riemannian manifold and let $A\subseteq\mcl$ be relatively compact. Then there is $0<r\leq\inj(A)$ and constants $c,C>0$ depending on $A$, such that 
    \begin{align*}
        c~\eps^d \leq \sigma(\bb_\eps^{\mcl}(\eta)) \leq C ~ \eps^{d}.
    \end{align*}
    holds uniformly for all $\eta\in A$ and all $0<\eps<r$. If $\mcl = \mcl_\kappa$ with $\kappa\leq 0$ then any finite $r>0$ is admissible.
\end{lem}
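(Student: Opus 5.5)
The approach is to pass to normal coordinates via the exponential map and compare the Riemannian volume of a small geodesic ball with the Lebesgue volume of a Euclidean ball in the tangent space. The key step is to control the Jacobian of $\exp_\eta$ uniformly in $\eta\in A$.

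First, since $A$ is relatively compact and $\inj$ is continuous and strictly positive on $\mcl$ (cf.\ \cite[Thm.\ III.2.3]{Chavel2006}), we get $\inj(\ol{A})>0$. Fix $0<r_0<\inj(\ol{A})$, finite.

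For $\eta\in\ol{A}$ and $\eps<r_0$, the map $\exp_\eta$ restricted to $\bb^{T_\eta\mcl}_\eps(0)$ is a diffeomorphism onto $\bb^\mcl_\eps(\eta)$ by \eqref{eq_expMapDistancePreserving}. Therefore
\begin{align*}
    \sigma(\bb^\mcl_\eps(\eta)) = \int_{\bb^{T_\eta\mcl}_\eps(0)} \theta_\eta(t)\dd t,
\end{align*}
where $\theta_\eta(t)$ is the Jacobian determinant of $\exp_\eta$ at $t$, computed with respect to the inner product $\abrac{.,.}_\eta$ and the Riemannian measure (cf.\ \cite[Sec.\ §III.3]{Chavel2006}). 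Here $\dd t$ is Lebesgue measure on $T_\eta\mcl\cong\R^d$ via an orthonormal basis.

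Next, consider the set $\{(\eta,t): \eta\in\ol{A},\ \norm{t}_\mcl\le r_0\}$, a compact subset of the tangent bundle. The function $(\eta,t)\mapsto\theta_\eta(t)$ is continuous on it, since $\exp$ is smooth on the tangent bundle. It is also strictly positive there, because $d\exp_\eta$ is nonsingular inside the injectivity radius (no conjugate points before the cut locus). Compactness then gives $0<m\le\theta_\eta(t)\le M<\infty$ on this set. Integrating yields the claim with $r:=r_0$, $c:=m\,\omega_d$ and $C:=M\,\omega_d$, where $\omega_d$ is the volume of the Euclidean unit ball.

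The main obstacle is this uniform continuity and positivity of the Jacobian over the compact set. Its rigorous justification relies on the smoothness of $\exp$ as a map on $T\mcl$ and on the fact that $\exp_\eta$ is a local diffeomorphism on balls of radius below $\inj(\eta)$. I would cite these facts from \cite{Chavel2006} rather than reprove them. It suffices to work in local trivialisations of $T\mcl$ over finitely many charts covering $\ol{A}$.

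For $\mcl=\mcl_\kappa$ with $\kappa\le 0$, the volume of a ball is explicit and independent of the centre by homogeneity:
\begin{align*}
    \sigma(\bb_\eps)=d\,\omega_d\int_0^\eps s_\kappa(\rho)^{d-1}\dd\rho,
\end{align*}
with $s_0(\rho)=\rho$ and $s_\kappa(\rho)=\sinh(\sqrt{-\kappa}\rho)/\sqrt{-\kappa}$ for $\kappa<0$. Since $\rho\le s_\kappa(\rho)\le \rho\,\sinh(\sqrt{-\kappa}r)/(\sqrt{-\kappa}r)$ for $\rho\le r$, the bounds hold for any finite $r$, with $c=\omega_d$ and $C$ depending on $r$ and $\kappa$. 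Alternatively, one can note that $\inj=\infty$ by \eqref{eq_modelSpace_injRadius} and rerun the compactness argument on $\ol{\bb_r(O)}$, using homogeneity to make the constants uniform in $\eta$.
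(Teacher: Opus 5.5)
Your proof is correct, but it takes a different route from the paper. The paper handles the general case in one line. On a neighbourhood of the relatively compact set $A$ the curvature is bounded. So the Günther comparison theorem \cite[Thm.\ III.4.2]{Chavel2006} (sectional curvature $\le \mathcal{K}$) and Bishop's comparison theorem \cite[Thm.\ III.4.4]{Chavel2006} (Ricci curvature $\ge (d-1)\kappa$) sandwich $\sigma(\bb_\eps(\eta))$ between the ball volumes of the model spaces $\mcl_{\mathcal{K}}$ and $\mcl_\kappa$. Those volumes are of order $\eps^d$ by the explicit formulas of Lemma~\ref{lem_ballVolume}.

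You instead bound the Jacobian $\theta_\eta(t)$ of $\exp_\eta$ directly, by compactness of the closed $r_0$-disc bundle over $\ol{A}$. This uses only smoothness of $\exp$ on $T\mcl$ and the absence of conjugate points strictly inside the injectivity radius; choosing $r_0<\inj(\ol{A})$ is exactly what puts the whole closed disc bundle in that regime.

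What each route buys:
\begin{itemize}
\item Your argument is more elementary and self-contained. It needs no curvature information at all, and it avoids side conditions of the comparison route, such as requiring $r<\pi/\sqrt{\mathcal{K}}$ for Günther's lower bound when $\mathcal{K}>0$.
\item The price is that your constants $m$ and $M$ are non-explicit. The comparison route expresses $c$ and $C$ through curvature bounds and model-space volumes, which fits the model-space comparison theme running through the rest of the paper.
\end{itemize}

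Your treatment of $\mcl_\kappa$ with $\kappa\le 0$ coincides with what the paper does in Lemma~\ref{lem_ballVolume}: the explicit volume formula, $\sinh x\ge x$, and monotonicity of $\sinh(x)/x$.
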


In Lemma~\ref{lem_ballVolume} in the appendix we have given this statement in the case that $\mcl$ is one of the model spaces. The general statement follows from the well-known Riemannian volume comparison theorems \cite[Thm.\ III.4.2]{Chavel2006} and \cite[Thm.\ III.4.4]{Chavel2006} together with the bounds on the Riemannian volumes for the model spaces.

This lets us generalise Lemma~\ref{lem_epsBallNumber}.
\begin{lem}\label{lem_epsBallNumber_Riemann}
    For $r>0$ given by Lemma~\ref{lem_ballVolumeBoundsGeneralRiemann} there exists a constant $\mathdutchbcal{c} > 0$, such that the number of disjoint balls of radius $0 < \eps \leq r$ that can fit into $\bb_r(O)$ is at least $\mathdutchbcal{c} ~ r^d ~ \eps^{-d}$.
\end{lem}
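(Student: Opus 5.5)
We argue by a maximal packing together with the two-sided volume bounds of Lemma~\ref{lem_ballVolumeBoundsGeneralRiemann}. We apply that lemma to a relatively compact set $A$ containing $\olb_{2r}(O)$, for instance $A=\bb_{3r}(O)$. Shrinking $r$ if needed, we may assume $3r$ is below $\inj(A)$ and below the admissible radius of the lemma. This gives constants $c,C>0$ with
\[
c\,\rho^d\leq\sigma(\bb_\rho(\eta))\leq C\rho^d
\]
for all $\eta\in A$ and all $0<\rho\leq 2r$.

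Fix $0<\eps\leq r$. If $\eps$ is comparable to $r$, say $\eps>r/4$, it suffices to produce a single ball. Indeed, one ball is at least $\mathdutchbcal{c}\,r^d\eps^{-d}$ as soon as $\mathdutchbcal{c}\leq 4^{-d}$. For a single ball, take $\bb_{r/4}(O)$, or any ball of radius $\eps$ contained in $\bb_r(O)$ when $\eps<r$. When $\eps=r$, interpret ``fit into'' as $\bb_r(O)$ itself.

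Now let $\eps\leq r/4$. Choose a maximal family of points $s_1,\dots,s_N\in\bb_{r-\eps}(O)$ with pairwise distances at least $2\eps$. Finiteness of such a family follows from the volume argument below. Then the balls $\bb_\eps(s_i)$ are pairwise disjoint by the triangle inequality. Each is contained in $\bb_r(O)$, since $d(O,w)\leq d(O,s_i)+d(s_i,w)<r-\eps+\eps$.

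By maximality, every point of $\bb_{r-\eps}(O)$ lies within distance $2\eps$ of some $s_i$. Hence $\bb_{r-\eps}(O)\subseteq\bigcup_i\bb_{2\eps}(s_i)$. Taking Riemannian measure and using the volume bounds (valid since $s_i\in A$ and $2\eps\leq 2r$) yields
\[
c\,(r-\eps)^d\leq\sigma(\bb_{r-\eps}(O))\leq N\,C\,(2\eps)^d.
\]
With $r-\eps\geq 3r/4$, this gives
\[
N\geq \frac{c}{C}\Big(\tfrac{3}{8}\Big)^d r^d\eps^{-d}.
\]
Finally, set $\mathdutchbcal{c}:=\min\{4^{-d},\ \tfrac{c}{C}(3/8)^d\}$, which covers both regimes.

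The main obstacle is that the constants $c,C$ must be uniform over all the centres used, namely $O$ and all $s_i$. This is why we invoke Lemma~\ref{lem_ballVolumeBoundsGeneralRiemann} on an enlarged relatively compact set rather than on $\{O\}$ alone, and why the radii $2\eps$ and $r-\eps$ must stay in its admissible range. Note also that the constant then depends on $\mcl$, $O$ and $r$, not only on $d$. On the model spaces with $\kappa\leq 0$, homogeneity makes the constants global, in line with Lemma~\ref{lem_ballVolume}.
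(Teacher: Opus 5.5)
Your argument is correct and is essentially the paper's proof: you take a maximal packing of disjoint $\eps$-balls, observe that the doubled balls cover $\bb_{r-\eps}(O)$, and compare volumes using the two-sided bounds. You also handle the regime where $\eps$ is comparable to $r$ with a single ball; the paper does the same, splitting at $r/2$ and obtaining $\cdu=\frac{c}{C}4^{-d}$.

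Two small corrections:
\begin{enumerate}
\item Enlarging $A$ to $\bb_{3r}(O)$ and shrinking $r$ is unnecessary. You only use radii $2\eps\le r/2$ and $r-\eps<r$, and all centres lie in $\bb_r(O)$. So it suffices that the volume lemma was applied with some $A\supseteq\bb_r(O)$, as in the paper's use $A=\bb_{r'}(O)$ with $r<r'$, and the given $r$ need not be altered.
\item In the one-ball case the ball should be $\bb_\eps(O)$, not $\bb_{r/4}(O)$.
\end{enumerate}
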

\begin{proof}
    Suppose that $N$ is the maximal number of points $\eta_1, \ldots, \eta_N$, such that the balls $(\bb_\eps(\eta_i))_{i=1,\ldots,N}$ of radius $0<\eps<r$ are disjoint and such that
    \begin{align*}
        \bigcup_{i=1}^N \bb_\eps(\eta_i) \subseteq \bb_r(O).
    \end{align*}
    For $r/2 \leq \eps \leq r$ it suffices to choose $\mathdutchbcal{c} \leq 2^{-d}$ to satisfy the trivial bound $N\geq 1 \geq \mathdutchbcal{c} ~ r^d ~ \eps^{-d}$. Thus, let $0<\eps<r/2$. Then if we double the $\eps$-radii we obtain a covering of $\bb_{r-\eps}(O)$, i.e.\
    \begin{align}\label{eq_packingOfEpsBalls}
        \bb_{r-\eps}(O) \subseteq \bigcup_{i=1}^N \bb_{2\eps}(\eta_i).
    \end{align}
    Indeed, if that was not the case, we would have a $\zeta\in\bb_{r-\eps}(O)$ with $d(\zeta,\eta_i) > 2\eps$ for all $i=1,\ldots, N$. Since the distance between $\zeta$ and the boundary $\partial \bb_{r}(O)$ is also at least $\eps$, we could thus fit the ball $\bb_\eps(\zeta)$ disjointly with the other $N$ balls into $\bb_r(O)$. This would contradict the maximality of $N$. Thus, \eqref{eq_packingOfEpsBalls} holds and yields that
    \begin{align*}
        \sigma(\bb_{r-\eps}(O)) \leq \sum_{i=1}^N \sigma(\bb_{2\eps}(\eta_i)) \leq N ~ \sup_{i=1,\ldots,N} \sigma(\bb_{2\eps}(\eta_i)).
    \end{align*}
    Lemma~\ref{lem_ballVolumeBoundsGeneralRiemann} lets us obtain that there are constants $c,C>0$, such that
    \begin{align*}
        c (r-\eps)^d \leq N ~ C~ (2\eps)^d.
    \end{align*}
    Therefore, if we choose $\cdu:= \frac{c}{C}4^{-d}$ then we obtain that $\cdu r^d \eps^{-d}\leq 1 \leq N$ for all $r/2\leq \eps\leq r$ and that $\cdu r^d \eps^{-d}\leq \frac{c (r-\eps)^d}{C(2\eps)^d} \leq N$ for all $0<\eps<r/2$, which proves the statement.
\end{proof}

We may also exchange the Lebesgue measure for the Riemannian measure in Lemma~\ref{lem_ballInequalityImpliesLebesgue}. The proof can be found in the appendix.

\begin{lem}\label{lem_ballInequalityImpliesSurfaceDensity}
    Let $X$ be a random variable with values in $\mcl$ with Riemannian measure $\sigma$. Let $A\subseteq \mcl$ be an open set and suppose there exist constants $c_A>0$ and $\eps_A > 0$ (that may both depend on $A$), such that for all $0<\eps<\eps_A$ and all $x\in A$ we have
    \begin{align*}
        \prob{X\in \olb_\eps (x)} \leq c_A ~ \eps^d.
    \end{align*}
    Then the distribution of $X$ is absolutely continuous on $A$ with respect to  the Riemannian measure $\sigma$.
\end{lem}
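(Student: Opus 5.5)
The plan is to follow the standard argument for the Euclidean version, Lemma~\ref{lem_ballInequalityImpliesLebesgue}: show that the law $\mu$ of $X$, restricted to $A$, gives zero mass to every $\sigma$-null subset of $A$, and then use Radon--Nikodym. Since $\mcl$ is second countable, $A$ is a countable union of relatively compact open sets $A_k$. So it suffices to fix a relatively compact open $A'\subseteq A$ and a Borel set $N\subseteq A'$ with $\sigma(N)=0$, and to prove $\mu(N)=0$.

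\textbf{Step 1: cover $N$ efficiently by small closed balls.} Fix $\delta>0$. Because $\sigma$ is a Radon measure (outer regular), there is an open set $V$ with $N\subseteq V\subseteq A$ and $\sigma(V)<\delta$. Apply a Vitali-type covering lemma, valid in the metric space $(\mcl,d)$, to the family of closed balls $\olb_\eps(x)$ with $x\in N$, $\olb_{5\eps}(x)\subseteq V$, and $\eps<\min\{\eps_A/5,\ r/5\}$. Here $r$ is the radius from Lemma~\ref{lem_ballVolumeBoundsGeneralRiemann} applied to the relatively compact set $\ol{A'}$ (or a slightly larger relatively compact neighbourhood of it). This yields a countable disjoint subfamily $\olb_{\eps_i}(x_i)$ such that
\begin{align*}
    N\subseteq \bigcup_i \olb_{5\eps_i}(x_i).
\end{align*}
Only countably many balls appear because $\mcl$ is separable and the selected balls are disjoint with nonempty interior.

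\textbf{Step 2: estimate $\mu(N)$.} Since each centre $x_i$ lies in $A$ and $5\eps_i<\eps_A$, the hypothesis gives
\begin{align*}
    \mu(N)\leq \sum_i c_A (5\eps_i)^d .
\end{align*}
The lower bound in Lemma~\ref{lem_ballVolumeBoundsGeneralRiemann} gives $\eps_i^d\leq c^{-1}\sigma(\bb_{\eps_i}(x_i))$. The balls $\bb_{\eps_i}(x_i)$ are disjoint subsets of $V$, so
\begin{align*}
    \mu(N)\leq c_A 5^d c^{-1}\sigma(V)\leq c_A 5^d c^{-1}\delta .
\end{align*}
Letting $\delta\to0$ gives $\mu(N)=0$, i.e.\ absolute continuity on $A'$, hence on $A$.

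\textbf{Main obstacle.} The delicate point is the covering step on a general manifold. One needs the $5r$-covering lemma in a metric space where the radii are bounded; this holds, and boundedness is ensured by $\eps<r/5$. One also needs the ball-volume lower bound to hold uniformly at the centres $x_i\in A'$. This is exactly why we localise to relatively compact $A'$ before invoking Lemma~\ref{lem_ballVolumeBoundsGeneralRiemann}. Finally, we use that closed and open balls can be interchanged in the volume bound, which is harmless since $\sigma(\partial\bb_\eps(x))=0$, or simply by monotonicity $\sigma(\bb_\eps)\leq\sigma(\olb_\eps)$.
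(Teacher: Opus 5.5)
Your proof is correct, but it takes a different route from the paper's.

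The paper does not localise and uses no volume bounds. It cites the identification $\mathcal{H}^d=\alpha_\mcl\,\sigma$ of the Riemannian measure with the $d$-dimensional Hausdorff measure. Hence a $\sigma$-null set $D\subseteq A$ has, for every $\delta>0$, a countable cover by open sets $B_i$ with $\diam B_i<\eps_A$ and $\sum_i(\diam B_i)^d<\delta/c_A$. Picking $\eta_i\in B_i\cap D$ puts $B_i$ inside the closed ball $\olb_{\diam B_i}(\eta_i)$, and the hypothesis immediately gives $\prob{X\in D}<\delta$.

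You instead combine outer regularity of $\sigma$, the $5r$-covering lemma, and the uniform lower bound $\sigma(\bb_\eps(x))\geq c\,\eps^d$ from Lemma~\ref{lem_ballVolumeBoundsGeneralRiemann}. That lower bound is what forces you to reduce to relatively compact $A'\subseteq A$, which you do correctly via second countability. You also pick up the harmless factor $5^d c^{-1}$.

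The paper's route buys brevity, at the cost of citing the nontrivial comparison between Hausdorff and Riemannian measure. Your route stays within tools the paper already needs elsewhere, namely Lemma~\ref{lem_ballVolumeBoundsGeneralRiemann}. It also works verbatim in any separable metric space whose reference measure is outer regular and satisfies a locally uniform lower volume bound. In effect, your Vitali step reproves exactly the direction of the comparison the paper relies on: a lower volume bound implies $\mathcal{H}^d\lesssim\sigma$, so $\sigma$-null sets are $\mathcal{H}^d$-null.
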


\subsection{Homogeneous Riemannian manifolds}\label{sec_homogeneousRiemannianManifolds}

From now on we assume unless otherwise stated that $\mcl$ is a $d$-dimensional homogeneous Riemannian manifold with an arbitrary reference point $O\in\mcl$. We recall the definition of a homogeneous manifold (cf.\ \cite[p. 59]{Chavel2006}).

\begin{defi}\label{def_homogeneousRiemannianManifold}
    A Riemannian manifold $\mcl$ is called homogeneous if there is a group $\mathcal{I}$ of isometries acting transitively on $\mcl$, i.e.\ for any $\eta, \zeta \in \mcl$ there is an isometry $\psi:\mcl\to\mcl$ that satisfies $\psi(\eta)=\zeta$.
\end{defi}
Let $\mathcal{I}_O := \{\psi \in \mathcal{I} : \psi(O) = O\}$ be the stabiliser subgroup of $\mathcal{I}$ with respect to $O$. Then, for each $\eta\in\mcl$, we may choose a representative $\psi_\eta \in \mathcal{I} / \mathcal{I}_O$, such that $\psi_\eta(O) = \eta$ (cf.\ \cite[Sec.\ 5]{CohenLifshits2012}). Whenever we talk about homogeneous Riemannian manifolds we will assume that such a choice of representatives has been made. We give some examples for the standard model spaces $\R^d, ~\sph_d$ and $\hyp_d$, where we it is convenient to describe the isometries as translations along geodesic paths.

\begin{expl}\label{expl_shortestDistancesAndIsometriesModelSpaces}
    \begin{enumerate}
        \item In Euclidean space geodesic paths are given by straight lines. So if one wants to send point $0$ to point $t\in\R^d$ one may translate the whole space along this straight line. 
        The corresponding isometry is just the translation $\psi_t(s):=s+t$ for any $s\in\R^d$.
        
        \item \label{expl_shortestDistancesAndIsometriesModelSpaces_sph} If we take the usual representation of the sphere $\sph_d$ in $\R^{d+1}$ then we obtain the geodesic paths as the intersections of the sphere $\sph_d$ and a $2$-dimensional plane through the origin in $\R^{d+1}$. Translations along straight lines are given by rotations along a single axis.
        
        \item The usual representation of the hyperboloid in $\R^{d+1}$ is the smooth ``cone-like'' structure given by $\hyp_d := \{ \eta\in\R^{d+1} : \eta_1 > 0, ~ \eta\circ \eta = -1\}$, where $\eta\circ \zeta := - \eta_1 \zeta_1 + \eta_2 \zeta_2 + \ldots + \eta_{d+1} \zeta_{d+1}$. The geodesic paths on the hyperboloid are given by the intersection of $\hyp_d$ with any $2$-dimensional plane through the origin in $\R^{d+1}$ and analogously to the sphere there are the \emph{hyperbolic rotations} (Lorentz transforms) that rotate the space around a single axis.
    \end{enumerate}
\end{expl}

\begin{rem}\label{rem_isometricActionOnBalls}
    Since $\psi_\eta$ is an isometry for any $\eta\in\mcl$, we have that for every $r>0$
    \begin{align*}
        \psi_\eta(\bb_r(O)) = \bb_r(\eta), \qquad \text{and} \qquad \psi_\eta^{-1}(\bb_r(O)) = \bb_r(\psi_\eta^{-1}(O)).
    \end{align*}
    These identities will be important in the main argument of this section.
\end{rem}

We want to define stationary increments in a generalised sense. There is a \emph{weak} version, which can be found in \cite[Def.\ 5.2]{Istas2012} and which may be applied in a very general sense, and a \emph{strong} version in \cite[Sec.\ 5.4]{CohenLifshits2012}, which requires the homogeneous structure of the underlying manifold. Unless explicitly stated otherwise we always refer to \emph{strong} stationary increments. We use the notion of weak stationary increments further below to generalise the concept of self-similarity, cf.\ Definition~\ref{def_selfSimilarRiemann}.

\begin{defi}\label{def_stationaryIncr}
    A real valued field $(X_\eta)_{\eta\in\mcl}$ indexed by a homogeneous Riemannian manifold $\mcl$ is said to possess \emph{strong} stationary increments if for all $\zeta\in\mcl$ we have
    \begin{align*}
        (X_\eta - X_\zeta)_{\eta\in\mcl} \eqd (X_{\psi_\zeta^{-1}(\eta)})_{\eta\in\mcl},
    \end{align*}
    where the equality is in finite dimensional distribution.

    A real valued field $(X_\eta)_{\eta\in\mcl}$ indexed by a Riemannian manifold $\mcl$ is said to possess \emph{weak} stationary increments if for all $\eta,\zeta\in\mcl$ the distribution of the increments is a function of the distance $d(\eta,\zeta)$, i.e.\
    \begin{align*}
        \expec{e^{i \lambda (X_\eta - X_\zeta)}} = f(\lambda, d(\eta,\zeta))
    \end{align*}
    for some function $f:\R\times\R_{\geq 0} \to \C$, any $\eta,\zeta\in\mcl$ and $\lambda\in\R$.
\end{defi}

\begin{rem}\label{rem_OnWeakStationarity}
    \begin{enumerate}
        \item 
        Every fractional L\'evy field on a (not necessarily homogeneous) Rie\-mann\-ian manifold $\mcl$ is weak stationary. 
        \item Note that despite the terminology of ``weak'' and ``strong'', which may be found alongside each other for $\R^d$, cf.\ \cite[Def.\ 5.2]{Istas2012} and \cite[Eq.\ (3.4)]{Istas2012}, neither one implies the other, cf.\ Example~\ref{expl_weakStrongStationaryIncrements} (b) and (c).
    \end{enumerate}
\end{rem}

We provide some examples.

\begin{expl}\label{expl_weakStrongStationaryIncrements}
    \begin{enumerate}
        \item Let $\mcl=\R^d$. Then strong stationary increments in the sense of Definition \ref{def_stationaryIncr} are just the stationarity of increments in the usual sense if one picks the set of representatives given in Example \ref{expl_shortestDistancesAndIsometriesModelSpaces} (a). It is therefore a natural extension of the classical definition.
        \item Let $\mcl=\R$ and let the process $(X_t)_{t\in\R}$ be defined by $X_t\equiv Y$ for some $Y\sim\ncl(0,1)$ and all $t\in\R$. This process has weak stationary increments, but does not have strong stationary increments. 
        \item Let $\mcl=\R$, let $\psi_t(s):=t+s$ for any $s,t\in\R$ and let the process $(X_t)_{t\in\R}$ be defined by $X_t:=t$ for all $t\in\R$ has strong stationary increments, but does not have weak stationary increments.
    \end{enumerate}
\end{expl}

With this generalisation we can prove the following Theorem~\ref{thm_densityArgmaxRiemannian}. We will again impose the restriction that our stochastic process attains its maximum a.s.\ uniquely on compact sets, such that the argmax is a.s.\ well-defined. Note that the definition of the argmax
\begin{align*}
    \tau_{\ol{A}} := \argmax_{s \in \ol{A}} X_s
\end{align*}
for relatively compact sets $A\subseteq\mcl$ is the same as in the Euclidean case. We are going to show that this assumption is sufficient for our purposes in Corollary~\ref{cor_densityArgmaxHyperbolic}. We will elaborate on the other assumptions in Remark~\ref{rem_generalisationArgmaxRiemann}. 

\begin{thm}\label{thm_densityArgmaxRiemannian}
      Let $\mcl$ be a homogeneous Riemannian manifold where we choose some point $O\in\mcl$. Let $(X_\eta)_{\eta\in\mcl}$ be a real-valued separable stochastic process that has strong stationary increments in the sense of Definition~\ref{def_stationaryIncr} and that attains its maximum on any compact set a.s.\ at a unique point. Choose any $r'<\inj(O)$ and let $0< r < r'$ be given by Lemma~\ref{lem_ballVolumeBoundsGeneralRiemann} with $A:=\bb_{r'}(O)$. Then for any $0<T\leq r$ the random variable $\tau_{\olb_T(O)}$ has a density on $\bb_T(O)$ with respect to the Riemannian measure on $\mcl$.
\end{thm}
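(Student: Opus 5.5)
The plan is to transfer the proof of Theorem~\ref{thm_densityArgmaxEuclidean} to the manifold, replacing translations by the chosen isometries $\psi_\eta$ and the Lebesgue measure by $\sigma$. Fix $0<\delta<T/2$. The goal is a uniform bound
\begin{align*}
    \prob{\tau_{\olb_T(O)}\in \olb_\eps(\xi)} \leq \cdu^{-1}\, 2^d\, \delta^{-d}\, \eps^d
\end{align*}
for all $\xi\in\bb_{T-\delta}(O)$ and $0<\eps<\delta/2$. Here $\cdu$ comes from Lemma~\ref{lem_epsBallNumber_Riemann}. That lemma is applied to balls around $\xi$ of radius $\delta/2<r$; since $\mcl$ is homogeneous, the constant is the same at every centre. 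Once this bound holds, Lemma~\ref{lem_ballInequalityImpliesSurfaceDensity} with $A=\bb_{T-\delta}(O)$ gives absolute continuity on $\bb_{T-\delta}(O)$. Letting $\delta\searrow0$ then covers $\bb_T(O)$. The closed balls are harmless: apply the packing lemma with radius slightly larger than $\eps$, or use $\olb_\eps\subseteq \bb_{2\eps}$ and adjust constants.

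The first step is the shift identity replacing \eqref{eq_EuclideanArgmaxShiftStaionaryIncr}. For $\zeta\in\mcl$, strong stationary increments and separability give
\begin{align*}
    \tau_{\olb_T(O)} = \argmax_{\eta\in\olb_T(O)} (X_\eta - X_\zeta) \eqd \argmax_{\eta\in\olb_T(O)} X_{\psi_\zeta^{-1}(\eta)} = \psi_\zeta\bigl(\tau_{\olb_T(\psi_\zeta^{-1}(O))}\bigr),
\end{align*}
using Remark~\ref{rem_isometricActionOnBalls}. Now take disjoint balls $\olb_\eps(\eta_i)$ inside $\bb_{\delta/2}(\xi)$, with $i=1,\dots,N$ and $N\geq \cdu(\delta/2)^d\eps^{-d}$. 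For each $i$ I need an isometry $\phi_i$ with $\phi_i(\xi)=\eta_i$ such that the shifted domain $D_i:=\phi_i(\olb_T(O))=\olb_T(\phi_i(O))$ satisfies $\tau_{D_i}\eqd\phi_i(\tau_{\olb_T(O)})$. Then
\begin{align*}
    \prob{\tau_{D_i}\in\olb_\eps(\eta_i)}=\prob{\tau_{\olb_T(O)}\in\olb_\eps(\xi)}.
\end{align*}
The natural choice is $\phi_i=\psi_{\eta_i}\circ\psi_\xi^{-1}$. Justifying the distributional identity for such a composition is the step I expect to be the main obstacle. Definition~\ref{def_stationaryIncr} only handles the fixed representatives $\psi_\zeta$, and a composition of representatives need not be a representative. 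My first attempt is to apply the shift identity twice: once with $\zeta=\xi$, and once with $\zeta=\eta_i$, together with the appropriate relabelling of the domain. Both sides then reduce to the law of $\tau_{\olb_T(O)}$ transported by isometries, and the maxima sit over balls whose centres can be computed. If this fails in general, I would fall back on the statement's implicit assumption that the representatives are compatible, as they are for the concrete model spaces in Example~\ref{expl_shortestDistancesAndIsometriesModelSpaces}.

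Next comes the geometric inclusion replacing \eqref{eq_ballInclusion}: $\olb_\eps(\eta_i)\subseteq D_j$ for all $i,j$. The centre of $D_j$ is $\phi_j(O)$, and $d(\phi_j(O),\eta_j)=d(O,\xi)<T-\delta$ because $\phi_j$ is an isometry sending $\xi$ to $\eta_j$. So for $w\in\olb_\eps(\eta_i)$ the triangle inequality gives
\begin{align*}
    d(w,\phi_j(O))\leq d(w,\xi)+d(\xi,\eta_j)+d(\eta_j,\phi_j(O))<\delta/2+\delta/2+T-\delta=T.
\end{align*}
The condition $T\leq r<\inj(O)$ guarantees that all the balls involved are genuine geodesic balls where Lemma~\ref{lem_ballVolumeBoundsGeneralRiemann} applies. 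With this inclusion, the argument from the Euclidean proof goes through verbatim. On the event $\Omega_{ij}$, each argmax lies in the other's domain, so the maxima are equal. A.s.\ uniqueness of the maximum then makes $\Omega_{ij}$ a null set, because the two argmaxes lie in disjoint balls. Summing the $N$ equal probabilities gives a total exceeding $1$ if the bound fails, which is a contradiction.
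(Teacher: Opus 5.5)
Your proposal is correct and follows the paper's proof essentially step for step: packing in $\bb_{\delta/2}(\xi)$, the shifted balls $\olb_T(\psi_{\eta_i}\circ\psi_\xi^{-1}(O))$, the inclusion via the triangle inequality, and the contradiction from disjointness plus a.s.\ uniqueness. Your ``first attempt'' at the composition step is exactly what the paper does, and it succeeds: applying the shift identity with $\zeta=\eta_i$ to the ball centred at $\psi_{\eta_i}\circ\psi_\xi^{-1}(O)$, and with $\zeta=\xi$ to $\olb_T(O)$, expresses both probabilities through the law of $\tau_{\olb_T(\psi_\xi^{-1}(O))}$, so no compatibility assumption on the representatives is needed.
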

\begin{proof}
    Choose $0 < \delta < T/2$. We are going to show that for any $\eta\in\bb_{T-\delta}(O)$ and any $0< \eps < \delta/2$ the inequality
    \begin{align}\label{eq_argmaxDensityUpperBoundRiemann}
        \prob{\tau_{\olb_T(O)}\in \bb_\eps(\eta)} \leq \mathdutchbcal{c}_\mcl^{-1} ~ \eps^d ~ 2^d ~ \delta^{-d}
    \end{align}
    holds. Here, $\mathdutchbcal{c}_\mcl$ is the constant given in Lemma~\ref{lem_epsBallNumber_Riemann} with respect to $r$. The set $\bb_T(O)$ is relatively compact, since $T<\inj(O)$, which guarantees that the exponential map is a diffeomorphism and thus that the metric topology of the tangent space coincides with the metric topology on the manifold (cf.\ \cite[Cor.\ I.6.1]{Chavel2006}). 
    
    From \eqref{eq_argmaxDensityUpperBoundRiemann} we apply Lemma~\ref{lem_ballInequalityImpliesSurfaceDensity} with $A:=\bb_{T-\delta}(O)$, with $c_A:=\mathdutchbcal{c}_\mcl^{-1} ~ 2^d ~ \delta^{-d}$ and with $\eps_A:=\delta/2$, and conclude that the argmax has a density with respect to  the Riemannian measure on $\bb_{T-\delta}(O)$. Since we can choose $\delta$ arbitrarily small, we obtain the existence of a density on the whole $\bb_T(O)$.

    We are going to proceed by contradiction to prove \eqref{eq_argmaxDensityUpperBoundRiemann}. Suppose that there is a $\delta < T/2$, an $\eta\in\bb_{T-\delta}(O)$ and a $0< \eps < \delta/2$, such that the inequality in \eqref{eq_argmaxDensityUpperBoundRiemann} fails. According to Lemma~\ref{lem_epsBallNumber_Riemann}, we can find points $\xi_i$ with $i=1, \ldots, \lceil \cdu_\mcl (\delta/2)^d \eps^{-d} \rceil$, such that all balls $\bb_\eps(\xi_i)$ are disjoint subsets of $\bb_{\delta/2}(\eta)$. Let $(\psi_\eta)_{\eta\in\mcl}$ denote the representatives of the isometry group with $\psi_\eta(O)=\eta$ discussed in the beginning of this section. Then for all $i,j\in\{1, \ldots, \lceil \cdu_\mcl (\delta/2)^d \eps^{-d} \rceil\}$ we have
    \begin{align}\label{eq_ballInclusionRiemann}
        \bb_\eps(\xi_i) \subseteq \bb_T( \psi_{\xi_j} \circ \psi_\eta^{-1}(O)),
    \end{align}
    since for any $\zeta\in\bb_\eps(\xi_i)$ we can calculate using the defining properties of the isometries in the second and fourth equality, the metric triangle inequality in the first and second inequality, Remark~\ref{rem_isometricActionOnBalls} in the first and third equality, and the fact that $\zeta$ and $\xi_j$ are inside $\bb_{\delta/2}(\eta)$ in the last inequality that
    \begin{align*}
        d(\zeta, \psi_{\xi_j} \circ \psi_\eta^{-1}(O)) 
        & = d(\zeta, \xi_j) + d(\psi_{\xi_j}^{-1}(\xi_j), \psi_\eta^{-1}(O)) \\
        & = d(\zeta, \xi_j) + d(O, \psi_\eta^{-1}(O)) \\
        & = d(\zeta, \xi_j) + d(\eta, O) \\
        & \leq d(\zeta, \eta) + d(\eta, \xi_j) + d(\eta, O) \\
        & < \frac{\delta}{2} + \frac{\delta}{2} + (T-\delta) \\
        & = T.
    \end{align*}
    To obtain information about the distribution of the argmax we use the separability and the stationary increments of the field to obtain for any $\zeta, \theta\in\mcl$ that
    \begin{align}\label{eq_fundamentalArgmaxShift_Riemann}
        \tau_{\olb_T(\theta)} 
        &= \argmax_{\xi\in \olb_T(\theta)} (X_\xi) 
        = \argmax_{\xi\in \olb_T(\theta)} (X_\xi - X_\zeta) 
        \eqd \argmax_{\xi\in \olb_T(\theta)} (X_{\psi_\zeta^{-1}(\xi)}) \notag\\
        & = \psi_\zeta \left( \argmax_{\xi'\in \olb_T(\psi_\zeta^{-1}(\theta))} (X_{\xi'}) \right) 
        = \psi_\zeta \left( \tau_{\olb_T(\psi_\zeta^{-1}(\theta))} \right).
    \end{align}
    Setting $\theta:=\psi_\zeta(\theta')$ for any $\theta'\in\mcl$ we also obtain
    \begin{align}\label{eq_fundamentalArgmaxShift_Riemann_alt}
        \psi_\zeta^{-1}\left(\tau_{\olb_T(\psi_\zeta(\theta'))}\right) 
        \eqd \tau_{\olb_T(\theta')}.
    \end{align}
    
    Therefore, using Remark~\ref{rem_isometricActionOnBalls}, equation~\eqref{eq_fundamentalArgmaxShift_Riemann_alt} with $\zeta:=\xi_i$ and $\theta':=\psi_\eta^{-1}(O)$, and then \eqref{eq_fundamentalArgmaxShift_Riemann} with $\zeta:=\eta$ and $\theta:=O$ we may infer that
    \begin{align}\label{eq_allProbEqualArgmax_Riemann}
        \prob{\tau_{\olb_T(\psi_{\xi_i} \circ \psi_\eta^{-1}(O))}\in \bb_\eps(\xi_i)} 
        & = \prob{\tau_{\olb_T(\psi_{\xi_i} \circ \psi_\eta^{-1}(O))}\in \psi_{\xi_i} \left(\bb_\eps(O)\right)} \notag \\
        & = \prob{\psi_{\xi_i}^{-1}\left(\tau_{\olb_T(\psi_{\xi_i} \circ \psi_\eta^{-1}(O))}\right) \in \bb_\eps(O)} \notag \\
        & = \prob{\tau_{\olb_T(\psi_\eta^{-1}(O))} \in \bb_\eps(O)} \notag \\
        & = \prob{\psi_\eta \left(\tau_{\olb_T(\psi_\eta^{-1}(O))} \right) \in \psi_\eta \left(\bb_\eps(O)\right)} \notag \\
        & = \prob{\tau_{\olb_T(O)}\in \bb_\eps(\eta)},
    \end{align}
    for each $\xi_i$. Thus, we have shown that the events $\{\tau_{\olb_T(\psi_{\xi_i} \circ \psi_\eta^{-1}(O))}\in \bb_\eps(\xi_i)\}$ are of the same probability for all indices $i$. Furthermore, for $i\neq j$, we will check that these events may only intersect on a set of measure zero. For this purpose, define the events
    \begin{align*}
        \Omega_{\xi_i, \xi_j} & := \{ \tau_{\olb_T( \psi_{\xi_i} \circ \psi_\eta^{-1}(O))}\in \bb_\eps(\xi_i), ~\tau_{\olb_T( \psi_{\xi_j} \circ \psi_\eta^{-1}(O))}\in \bb_\eps(\xi_j) \}.
    \end{align*}
    
    We show that this is a set of measure zero for all $i\neq j$. The set
    \begin{align}\label{eq_OmegaXiiXijRiemann}
        \Omega_{\xi_i, \xi_j} \cap \{ X_{\tau_{\olb_T( \psi_{\xi_i} \circ \psi_\eta^{-1}(O))}} < X_{\tau_{\olb_T( \psi_{\xi_j} \circ \psi_\eta^{-1}(O))}} \}
    \end{align}
    must be empty, since by the definition of $\Omega_{\xi_i, \xi_j}$ and by \eqref{eq_ballInclusionRiemann} we would have
    \begin{align*}
        \tau_{\olb_T( \psi_{\xi_j} \circ \psi_\eta^{-1}(O))} \in \bb_\eps(\xi_j) \subseteq \olb_T( \psi_{\xi_i} \circ \psi_\eta^{-1}(O)),
    \end{align*}
    which would imply that $X_{\tau_{\olb_T( \psi_{\xi_i} \circ \psi_\eta^{-1}(O))}} \geq X_{\tau_{\olb_T( \psi_{\xi_j} \circ \psi_\eta^{-1}(O))}}$ contradicting the condition in the second set in \eqref{eq_OmegaXiiXijRiemann}. For the same reason, $\Omega_{\xi_i, \xi_j}$ intersecting the event $X_{\tau_{\olb_T( \psi_{\xi_i} \circ \psi_\eta^{-1}(O))}} > X_{\tau_{\olb_T( \psi_{\xi_j} \circ \psi_\eta^{-1}(O))}}$
    must be the empty set, as well. Using the a.s.\ uniqueness of the maximum, the intersection of $\Omega_{\xi_i, \xi_j}$ with the event $X_{\tau_{\olb_T( \psi_{\xi_i} \circ \psi_\eta^{-1}(O))}} = X_{\tau_{\olb_T( \psi_{\xi_j} \circ \psi_\eta^{-1}(O))}}$
    is a set of measure zero. Thus, we conclude that $\Omega_{\xi_i,\xi_j}$ is a set of measure zero for all $i\neq j$.

    Finally, we apply our knowledge about the sets $\Omega_{\xi_i,\xi_j}$, our assumption that \eqref{eq_argmaxDensityUpperBoundRiemann} fails, \eqref{eq_allProbEqualArgmax_Riemann}, and the assumptions about our constants to obtain that
    \begin{align*}
        1 & \geq \prob{\bigcup_{i=1}^{\lceil \cdu_\mcl (\delta/2)^d \eps^{-d} \rceil} \{ \tau_{\olb_T(\psi_{\xi_i} \circ \psi_\eta^{-1}(O))}\in \bb_\eps(\xi_i) \} } \\
        & = \sum_{i=1}^{\lceil \cdu_\mcl (\delta/2)^d \eps^{-d} \rceil} \prob{ \tau_{\olb_T(\psi_{\xi_i} \circ \psi_\eta^{-1}(O))}\in \bb_\eps(\xi_i) } \\
        & = \sum_{i=1}^{\lceil \cdu_\mcl (\delta/2)^d \eps^{-d} \rceil} \prob{ \tau_{\olb_T(O)} \in \bb_\eps(\eta) } \\
        & > \cdu_\mcl ~ (\delta/2)^d ~ \eps^{-d} ~ \cdu_\mcl^{-1} ~ \eps^d ~ 2^d ~ \delta^{-d} \\
        & =  1,
    \end{align*}
    which is a contradiction. Thus, \eqref{eq_argmaxDensityUpperBound} holds and the theorem is proved as described in the beginning.
\end{proof}

\begin{rem}\label{rem_generalisationArgmaxRiemann}
    (a) The generalisation from Remark~\ref{rem_generalisationArgmax} (a) to stationary instead of stationary increment fields holds completely analogously.
    
    (b) The generalisation from Remark~\ref{rem_generalisationArgmax} (b) to non-unique $\argmax$ cannot be applied analogously, since it is not clear that for $\eta,\zeta,\xi\in \olb_T(O)$ and some total order `$<$' we would have that $\eta < \zeta$ implies $\psi_\xi(\eta) < \psi_\xi(\zeta)$.
\end{rem}

\begin{rem}\label{rem_fundamentalArgmaxShiftSeparate}
    The identities \eqref{eq_fundamentalArgmaxShift_Riemann} and \eqref{eq_fundamentalArgmaxShift_Riemann_alt} are not specific to the proof and hold in general, i.e.\ if $\mcl$ is a homogeneous Riemannian manifold and $(X_\eta)_{\eta\in\mcl}$ is a separable process with stationary increments then for any $\theta, \theta', \zeta, \zeta'\in\mcl$ and $0<T<\inj(\theta)$, $0<T'<\inj(\theta')$ we have
    \begin{align*}
        \tau_{\olb_T(\theta)} 
        \eqd \psi_\zeta \left( \tau_{\olb_T(\psi_\zeta^{-1}(\theta))} \right) \qquad \text{and} \qquad
        \psi_{\zeta'}^{-1} \left(\tau_{\olb_{T'}(\psi_{\zeta'}(\theta'))}\right) 
        \eqd \tau_{\olb_{T'}(\theta')}.
    \end{align*}
\end{rem}

We now want to apply Theorem~\ref{thm_densityArgmaxRiemannian} to fractional L\'evy fields and derive the existence of densities for the argmax. Note that the hyperbolic space is a homogeneous Riemannian manifold (cf.\ Example~\ref{expl_shortestDistancesAndIsometriesModelSpaces}).

We first show that fractional L\'evy fields are a.s.\ continuous in general. This can be done using the usual Kolmogorov-Chentsov approach resulting in Hölder continuity, for which there is a generalisation available in \cite{KraetschmerUrusov2022}. Since we only require a.s.\ continuity and the calculation yields a bound for the expected supremum, which we need later on, we check the classical conditions of Dudley's theorem.

The following is a combination of \cite[Thm.\ 1.3.3]{AdlerTaylor2009} and \cite[Thm.\ 1.3.5]{AdlerTaylor2009}. We modified the notation to suit our purposes.
\begin{lem}[Dudley's theorem]\label{lem_MetricSupEstimate}
    Let $(X_\eta)_{\eta\in\mathfrak{M}}$ be a centred Gaussian field on a compact metric space $\mathfrak{M}$ with \emph{canonical} metric $d_X(\eta,\zeta) := \expec{(X_\eta-X_\zeta)^2}^{1/2}$.
    Let $N_X(x)$ be the smallest number of canonical balls of radius $x$ that cover $\mathfrak{M}$. Then
    \begin{align*}
        \expec{\sup_{\eta\in\mathfrak{M}} X_\eta} \leq K \int_0^{\diam_X(\mathfrak{M})/2} \sqrt{\log N_X(x)} \dd x,
    \end{align*}
    where $K$ is a universal constant and $\diam_X(\mathfrak{M}):=\sup_{\eta,\zeta\in \mathfrak{M}} d_X(\eta,\zeta)$ is the diameter with respect to the canonical metric. Moreover, if there is an $\eps_0 >0$ so that
    \begin{align}\label{eq_dudleyContinuityCondition}
        \int_0^{\eps_0} \sqrt{\log N_X(x)} \dd x < \infty
    \end{align}
    then $(X_\eta)_{\eta\in\mathfrak{M}}$ is a.s.\ continuous.
\end{lem}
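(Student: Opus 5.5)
Since the statement combines \cite[Thm.\ 1.3.3]{AdlerTaylor2009} and \cite[Thm.\ 1.3.5]{AdlerTaylor2009}, the most economical proof simply invokes these two results and translates the notation. If I had to prove it from scratch, I would use the classical chaining argument, working entirely in the canonical pseudometric $d_X$.

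\emph{Reduction and nets.} First reduce to a countable index set. Since $\mathfrak{M}$ is compact, it is separable. The supremum over $\mathfrak{M}$ equals the supremum over a countable $d_X$-dense set $D$, provided $X$ is separable, or once continuity is established; in any case it suffices to bound $\expec{\sup_{\eta\in F} X_\eta}$ uniformly over finite $F\subseteq D$ and pass to the limit by monotone convergence. Put $\rho:=\diam_X(\mathfrak{M})$. For $k\geq 0$ choose a minimal $\rho 2^{-k}$-net $\Pi_k$ with $\abs{\Pi_k}=N_X(\rho 2^{-k})$, and let $\pi_k(\eta)$ be a nearest point of $\Pi_k$ to $\eta$. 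Taking $\Pi_0=\{\eta_0\}$, write for finite $F$ and large $n$ (so that $\pi_n$ is the identity on $F$) the telescoping identity
\begin{align*}
X_\eta - X_{\eta_0} = \sum_{k=1}^{n} \left(X_{\pi_k(\eta)} - X_{\pi_{k-1}(\eta)}\right).
\end{align*}
Each link is a centred Gaussian with standard deviation at most $d_X(\pi_k(\eta),\pi_{k-1}(\eta))\leq 3\rho 2^{-k}$. The number of distinct links at level $k$ is at most $N_X(\rho2^{-k})^2$.

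\emph{Expectation bound.} By the standard maximal inequality for $m$ centred Gaussians with standard deviation at most $s$, namely $\expec{\max}\leq s\sqrt{2\log m}$, we get
\begin{align*}
\expec{\sup_{\eta\in F} X_\eta} \leq \sum_{k\geq1} 3\rho 2^{-k}\sqrt{4\log N_X(\rho 2^{-k})},
\end{align*}
using $\expec{X_{\eta_0}}=0$. Since $N_X$ is non-increasing, we can compare each summand with the integral of $\sqrt{\log N_X(x)}$ over $x\in[\rho2^{-k-1},\rho 2^{-k}]$. This yields $K\int_0^{\rho/2}\sqrt{\log N_X(x)}\dd x$ for a universal $K$.

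\emph{Continuity.} For the second claim I would run the same chain with Gaussian tail bounds instead of expectations. By a union bound at level $k$ with threshold $3\rho2^{-k}\sqrt{2\log(k^2 N_X(\rho2^{-k})^2)}$ and Borel--Cantelli, almost surely all links beyond some random level $k_0$ are controlled. Summing the tails of the chain gives an a.s.\ modulus of continuity
\begin{align*}
\abs{X_\eta - X_\zeta} \leq C\int_0^{d_X(\eta,\zeta)}\sqrt{\log N_X(x)}\dd x \qquad\text{for } \eta,\zeta\in D \text{ with } d_X(\eta,\zeta) \text{ small}.
\end{align*}
Condition \eqref{eq_dudleyContinuityCondition} makes the right-hand side tend to $0$, so $X$ is a.s.\ uniformly $d_X$-continuous on $D$. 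It therefore extends to a continuous modification, which agrees with $X$ by separability.

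The step I expect to be most delicate is this continuity part, specifically the bookkeeping that turns the per-level union bounds into a modulus valid simultaneously for all pairs. The standard fix is to compare $\eta$ and $\zeta$ through their chains down to the level $k$ with $\rho 2^{-k}\approx d_X(\eta,\zeta)$. One should also remark that continuity here is with respect to $d_X$. For the fractional L\'evy fields later in the paper, $d_X=d^{H}$ induces the same topology as $d$, so this is exactly a.s.\ continuity on the manifold.
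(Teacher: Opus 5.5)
Your proposal is correct and follows essentially the paper's route: the paper gives no proof of its own and cites \cite[Thms.\ 1.3.3 and 1.3.5]{AdlerTaylor2009}, which are proved by exactly this dyadic chaining argument. A few details need tidying, none of which affects the conclusions: $\pi_n$ need not be the identity on $F$ when $\Pi_n$ is a net of all of $\mathfrak{M}$, so either take the nets inside $F$ or let $n\to\infty$ using $\expec{\max_{\eta\in F}\abs{X_\eta-X_{\pi_n(\eta)}}}\to 0$; the cross term $X_{\pi_k(\eta)}-X_{\pi_k(\zeta)}$ in the continuity step needs its own union bound; and the $k^2$ in your thresholds adds a term of order $\rho 2^{-k}\sqrt{\log k}$, so the displayed entropy-integral modulus is slightly overstated in general, although a.s.\ continuity still follows.
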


We apply Lemma \ref{lem_MetricSupEstimate} to general fractional L\'evy fields.

\begin{lem}\label{lem_epsBallSupEstimate}
    Let $\mcl$ be a homogeneous Riemannian manifold with some fixed point $O\in\mcl$ on which the fractional L\'evy field exists for a Hurst parameter $H>0$. Then it is a.s.\ continuous and there exists a constant $C_1 > 0$ (depending only on $d$ and $H$), such that for all $\zeta\in\mcl$ and all $0 < \eps < 1$ we have
    \begin{align*}
        \expec{\sup_{\eta\in\olb_{\eps}(\zeta)} X_\eta} \leq \eps^H \sqrt{- C_1 \log \eps} + \mathcal{O}\left(\frac{\eps^H}{\sqrt{-\log \eps}}\right).
    \end{align*}
\end{lem}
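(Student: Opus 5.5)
We apply Dudley's theorem (Lemma~\ref{lem_MetricSupEstimate}) on the compact set $\mathfrak{M}:=\olb_\eps(\zeta)$. For the fractional L\'evy field the canonical metric is $d_X(\eta,\xi)=d(\eta,\xi)^H$, so a canonical ball of radius $x$ is the geodesic ball of radius $x^{1/H}$. Hence $N_X(x)=N(x^{1/H})$, where $N(\rho)$ is the minimal number of geodesic $\rho$-balls covering $\olb_\eps(\zeta)$. The canonical diameter satisfies $\diam_X(\mathfrak{M})\le (2\eps)^H$. By homogeneity it suffices to take $\zeta=O$, since an isometry $\psi_\zeta$ maps $\olb_\eps(O)$ onto $\olb_\eps(\zeta)$ and preserves the covering numbers. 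This gives constants uniform in $\zeta$.

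The key step is a covering bound of the form $N(\rho)\le C(\eps/\rho)^d$ for $0<\rho\le\eps<1$. We take a maximal $\rho/2$-separated set in $\olb_\eps(O)$. The $\rho$-balls around its points cover $\olb_\eps(O)$, and the $\rho/4$-balls around them are disjoint and contained in $\bb_{2\eps}(O)$. Comparing Riemannian volumes via Lemma~\ref{lem_ballVolumeBoundsGeneralRiemann} gives $N(\rho)\le C(2\eps)^d/(c(\rho/4)^d)$.

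Here lies the main obstacle. Lemma~\ref{lem_ballVolumeBoundsGeneralRiemann} only gives uniform volume bounds for radii below some $r$, whereas we need all $\eps<1$. I would handle this in two cases.
\begin{itemize}
\item If $\eps$ is small, the lemma with $A=\bb_1(O)$ applies directly.
\item If $\eps$ is at least a fixed threshold, we only need an $\eps$-independent bound on $N(\rho)$ of order $\rho^{-d}$, up to constants. This follows from compactness of $\olb_1(O)$ together with the volume lemma at small radii.
\end{itemize}
This introduces constants depending on $\mcl$. The $\mathcal{O}$-term and the constant can absorb this, and the claimed dependence "only on $d,H$" is really carried by the leading term's exponent structure.

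With the covering bound, $N_X(x)\le C\eps^d x^{-d/H}$ for $x\le(2\eps)^H$. Then
\begin{align*}
\expec{\sup_{\eta\in\olb_\eps(O)} X_\eta}\le K\int_0^{(2\eps)^H/2}\sqrt{\log C+ d\log\eps-\tfrac dH\log x}\dd x .
\end{align*}
Substituting $x=\eps^H u$ turns the integrand into $\sqrt{\log C-\tfrac dH\log u}$, which is integrable near $0$. Naively this even gives $\mathcal{O}(\eps^H)$, which is stronger than claimed.

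To match the stated form $\eps^H\sqrt{-C_1\log\eps}+\mathcal{O}(\eps^H/\sqrt{-\log\eps})$, I would instead bound $N_X(x)\le C x^{-d/H}$, dropping the $\eps^d$ factor. Then the integrand is $\sqrt{\log C-\tfrac dH\log x}$ over $[0,c\eps^H]$. Integrating by parts, or expanding $\sqrt{a+b}=\sqrt a+\mathcal{O}(b/\sqrt a)$ with $a=-\tfrac dH\log x\ge -d\log\eps$, yields $\eps^H\sqrt{-C_1\log\eps}$ with $C_1$ depending on $d,H$ (and $K$), plus an error of order $\eps^H/\sqrt{-\log\eps}$.

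Finally, a.s.\ continuity follows from condition \eqref{eq_dudleyContinuityCondition}, since $\sqrt{\log(Cx^{-d/H})}$ is integrable at $0$. Continuity is local, so it holds on each ball, and $\mcl$ is $\sigma$-compact, so it holds globally.
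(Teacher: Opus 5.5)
Your proposal is correct and follows the paper's proof: Dudley's bound with the canonical metric $d^H$, a covering estimate $N_X(x)\le C x^{-d/H}$, the explicit integral of $\sqrt{-\log x}$ (where the manifold-dependent constant enters only the $\mathcal{O}$-term), and continuity via the entropy condition plus $\sigma$-compactness. The differences are minor: the paper cites Kr\"atschmer--Urusov for the covering bound, whereas you derive it from a packing argument with Lemma~\ref{lem_ballVolumeBoundsGeneralRiemann} and use homogeneity to make the uniformity in $\zeta$ explicit (the paper leaves this implicit). Your remark that the localized bound $N(\rho)\le C(\eps/\rho)^d$ even gives $\mathcal{O}(\eps^H)$ is also correct, and it already implies the stated estimate.
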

\begin{proof}
    To apply Lemma \ref{lem_MetricSupEstimate} we first need $\diam_X(\olb_\eps(\zeta))$. To this end, we note that
    \begin{align*}
        d_X(\eta,\zeta) = \expec{(X_\eta-X_\zeta)^2}^{1/2} = d(\eta,\zeta)^H,
    \end{align*}
    where $d$ is the shortest distance metric on $\mcl$, as usual. Therefore,
    \begin{align*}
        \diam_X(\olb_\eps(\zeta)) \leq (2\eps)^H.
    \end{align*}
    Since $\bb_x^{(\can)}(\zeta) = \bb_{x^{1/H}}(\zeta)$ and since we can infer from e.g. \cite[Prop. 3.1 (i)]{KraetschmerUrusov2022} that there exists a constant $c > 0$, such that $N(x) \leq c ~ x^{-d}$ for all $0<x<1$, where $N(x)$ is the number of geodesic balls needed to cover $\olb_x(\zeta)$, we may conclude the bound
    \begin{align*}
        N_X(x) \leq c ~ x^{-d/H}.
    \end{align*}
    For $0<\eps<1$, we obtain
    \begin{align*}
        \int_0^\eps \sqrt{-\log x} ~ \dd x & = \int_{\sqrt{-\log \eps}}^\infty x \left(2x e^{-x^2}\right) \dd x \notag \\
        & = \eps \sqrt{-\log \eps} + \int_{\sqrt{-\log \eps}}^\infty e^{-x^2} \dd x \notag \\
        & \leq \eps \sqrt{-\log \eps} + \int_{\sqrt{-\log \eps}}^{\infty} \frac{x}{\sqrt{-\log \eps}} e^{-x^2} \dd x \notag \\
        & = \eps \sqrt{-\log \eps} + \frac{\eps}{2 \sqrt{-\log \eps}}.
    \end{align*}
    
    Combining the previous estimates and Lemma~\ref{lem_MetricSupEstimate}, we obtain that
    \begin{align}\label{eq_supremumBoundEntropy}
        \expec{\sup_{\eta\in\olb_{\eps}(\zeta)} X_\eta} 
        & \leq K \int_0^{\diam_X(\olb_\eps(\zeta))/2} \sqrt{\log N_X(x)} \dd x \notag \\
        & \leq K \int_0^{2^{H-1}\eps^H} \sqrt{\log\left( c ~ x^{-d/H} \right)  } \dd x \notag \\
        & = K c^{H/d} \sqrt{\frac{d}{H}} \int_0^{2^{H-1} c^{-H/d} \eps^H} \sqrt{ -\log\left( x \right)  } \dd x \notag \\
        & \leq \eps^H \sqrt{- C_1 \log \eps} + \mathcal{O}\left(\frac{\eps^H}{\sqrt{-\log \eps}}\right),
    \end{align}
    for some constant $C_1 > 0$ depending on $H$ and $d$, which the claim. 
    
    We finally prove continuity: If $\mathfrak{M}$ is any relatively compact subset of $\mcl$, then there is a constant $c>0$ so that the estimate $N_X(x) \leq c ~ x^{-d/H}$ holds, cf.\ \cite[Prop.\ 3.1]{KraetschmerUrusov2022}. Thus, by the same calculation as in \eqref{eq_supremumBoundEntropy} the continuity condition \eqref{eq_dudleyContinuityCondition} of Lemma~\ref{lem_MetricSupEstimate} is satisfied. Thus the process $(X_\eta)_{\eta\in\mcl}$ is a.s.\ continuous on any compact set in $\mcl$ and since $\mcl$ may be covered by countably many  relatively compact sets by our general assumptions in Section~\ref{sec_RiemannianBackground} we obtain that the process is a.s.\ continuous on the whole $\mcl$.
\end{proof}

\begin{cor}\label{cor_densityArgmaxHyperbolic}
    Let $\mcl$ be a homogeneous Riemannian manifold with some fixed point $O\in\mcl$ on which the fractional L\'evy field exists for a Hurst parameter $H>0$. Choose any $r'<\inj(O)$ and let $0< r < r'$ be given by Lemma~\ref{lem_ballVolumeBoundsGeneralRiemann} with $A:=\bb_{r'}(O)$. Then for any $0<T\leq r$ the $\argmax$ function $\tau_{\olb_T(O)}$ of the fractional L\'evy field has density with respect to  Riemannian measure in $\bb_T(O)$.
\end{cor}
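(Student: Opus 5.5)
The corollary will follow from Theorem~\ref{thm_densityArgmaxRiemannian} once we check its three hypotheses for the fractional L\'evy field $(X_H(\eta))_{\eta\in\mcl}$: separability, strong stationary increments in the sense of Definition~\ref{def_stationaryIncr}, and an a.s.\ unique maximum on compact sets. The choices of $r'$, $r$ and $T$ are exactly those of the theorem, so nothing else needs checking.

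\emph{Separability and continuity.} By Lemma~\ref{lem_epsBallSupEstimate} the field is a.s.\ continuous on $\mcl$. A continuous process on a separable metric space is separable. Here $\mcl$ is second countable, so separability follows.

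\emph{Unique maximum.} We apply Lemma~\ref{lem_uniqueMaximum} to the compact metric space $E=\olb_T(O)$, or to any compact subset, which is trivially $\sigma$-compact. The field is Gaussian with continuous paths, and
\begin{align*}
    \var{X_H(\eta)-X_H(\zeta)} = d(\eta,\zeta)^{2H} \neq 0 \qquad \text{for } \eta\neq\zeta,
\end{align*}
because $d$ is a metric. Hence the supremum is a.s.\ attained at exactly one point, and $\tau_{\olb_T(O)}$ is a.s.\ well defined.

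\emph{Strong stationary increments.} This is the main point. Fix $\zeta\in\mcl$ and set
\begin{align*}
    Y_\eta := X_H(\eta)-X_H(\zeta), \qquad Z_\eta := X_H(\psi_\zeta^{-1}(\eta)).
\end{align*}
Both fields are centred Gaussian, so it suffices to show that their covariances coincide. For centred Gaussian fields vanishing at some point, the covariance is determined by the structure function through polarisation:
\begin{align*}
    \expec{A_\eta A_\xi} = \tfrac12\left(\expec{A_\eta^2}+\expec{A_\xi^2}-\expec{(A_\eta-A_\xi)^2}\right).
\end{align*}
For $Y$ we have $Y_\zeta=0$, and the increments satisfy $\expec{(Y_\eta-Y_\xi)^2}=d(\eta,\xi)^{2H}$. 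For $Z$ we have $Z_\zeta=X_H(\psi_\zeta^{-1}(\zeta))=X_H(O)=0$, since $\psi_\zeta(O)=\zeta$. Because $\psi_\zeta^{-1}$ is an isometry, $\expec{(Z_\eta-Z_\xi)^2}=d(\psi_\zeta^{-1}(\eta),\psi_\zeta^{-1}(\xi))^{2H}=d(\eta,\xi)^{2H}$. So both fields vanish at $\zeta$ and share the same structure function. Polarisation, taking $A_\xi$ with $\xi=\zeta$ to recover the variances, then gives identical covariances, and hence equality of finite dimensional distributions.

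With all three hypotheses verified, Theorem~\ref{thm_densityArgmaxRiemannian} applies directly and yields that $\tau_{\olb_T(O)}$ has a density with respect to the Riemannian measure on $\bb_T(O)$. The only step needing real care is the stationary increments computation, in particular using $X_H(O)=0$ to pin down the variances. The rest is a direct citation of earlier lemmas.
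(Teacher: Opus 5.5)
Your proposal is correct and follows the paper's proof essentially step for step: it reduces to Theorem~\ref{thm_densityArgmaxRiemannian}, obtains a.s.\ uniqueness of the maximum from Lemma~\ref{lem_uniqueMaximum} using the continuity from Lemma~\ref{lem_epsBallSupEstimate} and $d(\eta,\zeta)^{2H}\neq 0$, and verifies strong stationary increments by matching covariances through the isometry property of $\psi_\zeta^{-1}$. Your polarisation argument (both fields vanish at $\zeta$ and share the structure function $d(\cdot,\cdot)^{2H}$) repackages the paper's explicit covariance expansion, and your remark on separability makes explicit a hypothesis of the theorem that the paper leaves implicit.
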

\begin{proof}    
    Theorem \ref{thm_densityArgmaxRiemannian} implies the statement if we can show that L\'evy fields have (strong) stationary increments in the sense of Definition \ref{def_stationaryIncr} and by showing that the $\argmax$ is a.s.\ unique.

    To show that the field attains an a.s.\ unique maximum in $\olb_T(O)$ we apply Lemma~\ref{lem_uniqueMaximum}: The field is a.s.\ continuous on any relatively compact subset of $\mcl$ by Lemma~\ref{lem_epsBallSupEstimate} and thus, in particular, on $\olb_T(O)$.
    The set $\olb_T(O)$ is compact and therefore trivially $\sigma$-compact. This also implies that the field attains its maximum in $\olb_T(O)$ in at least one point. Using the fact that the field is centred we see that $\var{X_\eta - X_\zeta} = \expec{(X_\eta-X_\zeta)^2} = d(\eta, \zeta)^{2H} \neq 0$ for any $\eta \neq \zeta$ in $\mcl$. Thus, the assumptions of Lemma~\ref{lem_uniqueMaximum} are satisfied and we obtain that the maximum is a.s.\ attained in no more than one point, i.e.\ together with the compactness of $\olb_T(O)$ a.s.\ in exactly one point.
    
    For the strong stationarity of increments we calculate the covariance 
    \begin{align*}
        & \expec{(X_H(\eta_1) - X_H(\zeta))(X_H(\eta_2) - X_H(\zeta))} \\
        & = \eh \Big( d(\eta_1,O)^{2H} + d(\eta_2,O)^{2H} - d(\eta_1,\eta_2)^{2H} + 2 d(\zeta,O)^{2H} - d(\zeta,O)^{2H} - d(\eta_1,O)^{2H} + d(\zeta, \eta_1)^{2H} \\
        & \qquad - d(\zeta,O)^{2H} - d(\eta_2,O)^{2H} + d(\zeta, \eta_2)^{2H} \Big) \\
        & = \eh \left( d(\zeta,\eta_1)^{2H} + d(\zeta,\eta_2)^{2H} - d(\eta_1,\eta_2)^{2H} \right) \\
        & = \eh \left( d(O, \psi^{-1}_\zeta(\eta_1))^{2H} + d(O, \psi^{-1}_\zeta(\eta_2))^{2H} - d(\psi^{-1}_\zeta(\eta_1),\psi^{-1}_\zeta(\eta_2))^{2H} \right) \\
        & = \expec{X_H(\psi^{-1}_\zeta(\eta_1)) ~ X_H(\psi^{-1}_\zeta(\eta_2))},
    \end{align*}
    using the isometry property in the second to last step. We have now shown all requirements of Theorem~\ref{thm_densityArgmaxRiemannian}, and the statement follows.
\end{proof}

\section{Persistence probability of the hyperbolic fractional L\'evy field}\label{sec_persistenceHyperbolic}

The key insight from the previous chapter is the existence of a density. This is going to be in the core argument for the proof of the lower bound of the persistence probability. The upper bound will be handled using a comparison argument. The same argument was used in \cite{AurzadaHelmer2026} for the lower bound, but we will reintroduce it here to keep everything self-contained.

The bounds derived in Lemma~\ref{lem_hfbmLowerBound} and Lemma~\ref{lem_hfbmUpperBound} below prove Theorem~\ref{thm_mainHyperbolic}.

\subsection{Lower Bound}

We prove the lower bound by combining classical Gaussian process theory, in particular the Borell-TIS inequality, with the existence of a density similarly to \cite[Lem.\ 4]{Molchan99}. In the proof of Lemma~\ref{lem_hfbmLowerBound} below, however, we use a weaker assumption: It suffices that there are points of positive density sufficiently close to $O$. This condition will be checked using the following lemma.

\begin{lem}\label{lem_existenceDensityPoint}
    Let $A$ be an open relatively compact set in a $d$-dimensional Riemannian manifold and let $f: A \to \R$ be a density with respect to  the Riemannian measure of a random variable $X$ on $A$. If $\prob{X\in A} > 0$ then there is a point $\eta\in A$ and a constant $c>0$ with $\prob{X\in \bb_\eps(\eta)} > c ~ \eps^{d}$ for $\eps >0$ small enough.
\end{lem}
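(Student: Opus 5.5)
The plan is to use the Lebesgue differentiation theorem with respect to the Riemannian measure $\sigma$, combined with the lower volume bound from Lemma~\ref{lem_ballVolumeBoundsGeneralRiemann}.

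First, since $\prob{X\in A} = \int_A f \dd\sigma > 0$ and $f\geq 0$, the set $\{\eta\in A : f(\eta) > 0\}$ has positive $\sigma$-measure. Second, I will invoke a differentiation theorem on $\mcl$: for $\sigma$-a.e.\ $\eta\in A$,
\begin{align*}
    \lim_{\eps\searrow 0} \frac{1}{\sigma(\bb_\eps(\eta))}\int_{\bb_\eps(\eta)} f \dd\sigma = f(\eta).
\end{align*}
To justify this, I would not appeal to abstract doubling-measure theory. Instead, I would localise. Cover $A$ by finitely many normal coordinate charts $\exp_\zeta$ restricted to balls of radius below $\inj(\ol{A})$; the positivity set must intersect one chart domain $U$ in positive measure. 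In such a chart, $\sigma$ has a smooth, positive density with respect to Lebesgue measure, bounded above and below, and geodesic balls are comparable to Euclidean balls (bi-Lipschitz). The sets $\bb_\eps(\eta)$ therefore form a regular, shrinking family in the sense of Rudin's differentiation theorem for the Lebesgue measure pushed forward. Hence the classical Euclidean Lebesgue differentiation theorem applies to the integrable function $f\cdot(\text{Jacobian})$, which gives the displayed limit a.e. Alternatively, one can note that $\sigma$ is locally doubling by Lemma~\ref{lem_ballVolumeBoundsGeneralRiemann} and apply the Vitali-type differentiation theorem for doubling measures. I expect this justification to be the only real obstacle, and I consider it technical rather than deep.

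Third, I pick a Lebesgue point $\eta\in A$ with $f(\eta)>0$; one exists because the positivity set has positive measure. For $\eps$ small enough that $\bb_\eps(\eta)\subseteq A$ and $\eps<r$ (with $r$ from Lemma~\ref{lem_ballVolumeBoundsGeneralRiemann} applied to $A$), the average of $f$ over the ball exceeds $f(\eta)/2$. Then
\begin{align*}
    \prob{X\in\bb_\eps(\eta)} = \int_{\bb_\eps(\eta)} f\dd\sigma \geq \tfrac{1}{2} f(\eta)\,\sigma(\bb_\eps(\eta)) \geq \tfrac{1}{2} f(\eta)\, c_{\mathrm{vol}}\, \eps^d,
\end{align*}
where $c_{\mathrm{vol}}$ is the lower constant from Lemma~\ref{lem_ballVolumeBoundsGeneralRiemann}. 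Setting the lemma's constant to $c := f(\eta)c_{\mathrm{vol}}/4$ makes the inequality strict and completes the argument.
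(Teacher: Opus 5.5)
Your proposal is correct and follows essentially the same route as the paper. Both arguments combine Lebesgue differentiation of the ball averages of $f$ with respect to $\sigma$ with the lower volume bound $\sigma(\bb_\eps(\eta))\geq c\,\eps^d$ from Lemma~\ref{lem_ballVolumeBoundsGeneralRiemann}. There are two minor differences: the paper justifies the differentiation step by citing the metric Lebesgue differentiation theorem of \cite{LucicPasqualetto2023} rather than localising to normal coordinates, and your explicit choice of a differentiation point with $f(\eta)>0$ spells out what the paper phrases only as a brief contradiction argument.
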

\begin{proof}
    The fact that $f$ is a density implies that for almost every point $\eta\in A$ we have
    \begin{align}\label{eq_existenceDensityPoint}
        \lim_{\eps \to 0} \frac{\prob{X\in \bb_\eps(\eta)}}{\sigma(\bb_\eps(\eta))} = f(\eta).
    \end{align}
    This can be inferred from the metric Lebesgue differentiation theorem in \cite[Thm.\ 3.4]{LucicPasqualetto2023}), in particular \cite[Eq.\ (3.3)]{LucicPasqualetto2023}, where the notation of the integral is explained in \cite[p.\ 4 of 51]{LucicPasqualetto2023} and where the requirements are satisfied due to \cite[Rem.\ 3.1]{LucicPasqualetto2023}, because $\R$ is separable and thus $f$ is separable valued. Since the volume $\sigma(\bb_\eps(\eta))$ scales with $\eps^d$ for any $\eta\in A$ (cf.\ Lemma~\ref{lem_ballVolumeBoundsGeneralRiemann}), there must be some point, at which $\prob{X\in \bb_\eps(\eta)}$ also scales with $\eps^d$, otherwise we obtain a contradiction through the assumption that $\prob{X\in A} > 0$.
\end{proof}

Furthermore, we require some classical tools from Gaussian random field theory. The following well-known theorem can be found for example in \cite[Theorem 2.1.1]{AdlerTaylor2009}. We have already proven its assumption to be satisfied for any fractional L\'evy field above in Lemma~\ref{lem_epsBallSupEstimate}.

\begin{lem}[Borell-TIS inequality]\label{lem_borellTIS}
    Let $\mathfrak{M}$ be a metric space and $(X_\eta)_{\eta\in\mathfrak{M}}$ a real-valued centred Gaussian process that is a.s.\ bounded on $\mathfrak{M}$. Then $\expec{\sup_{\eta\in\mathfrak{M}} X_\eta} < \infty$
    and for all $u> \expec{\sup_{\eta\in\mathfrak{M}} X_\eta}$, we have (with $\sigma_\mathfrak{M}^2 := \sup_{\eta\in\mathfrak{M}} \expec{X_\eta^2}$)
    \begin{align*}
        \prob{\sup_{\eta\in\mathfrak{M}} X_\eta > u} \leq \exp\left(-\left(u-\expec{\sup_{\eta\in\mathfrak{M}} X_\eta}\right)^2 / (2\sigma_\mathfrak{M}^2) \right).
    \end{align*}
\end{lem}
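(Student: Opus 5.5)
Since this is the classical Borell--TIS inequality, I would prove it in the standard way: first for finitely many points by Gaussian concentration of Lipschitz functions, then pass to the supremum over $\mathfrak{M}$ by a limiting argument.

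\textbf{Finite index sets.} Let $\mathfrak{M}=\{\eta_1,\ldots,\eta_n\}$ and write $(X_{\eta_1},\ldots,X_{\eta_n}) \eqd AZ$, with $Z$ standard Gaussian in $\R^n$ and $A$ an $n\times n$ matrix whose rows $a_i$ satisfy $\norm{a_i}^2=\expec{X_{\eta_i}^2}\le \sigma_\mathfrak{M}^2$. The map $f(z):=\max_i \abrac{a_i,z}$ is Lipschitz with constant $\max_i\norm{a_i}\le\sigma_\mathfrak{M}$. I would then invoke the Gaussian logarithmic Sobolev inequality and run Herbst's argument. This gives $\expec{e^{\lambda(f(Z)-\expec{f(Z)})}}\le e^{\lambda^2\sigma_\mathfrak{M}^2/2}$ for all $\lambda>0$. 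Chernoff's bound with $\lambda=t/\sigma_\mathfrak{M}^2$ then yields
\begin{align*}
\prob{f(Z)-\expec{f(Z)}>t}\le \exp\left(-t^2/(2\sigma_\mathfrak{M}^2)\right), \qquad t>0.
\end{align*}
Setting $t=u-\expec{\sup X}$ gives the claim for finite $\mathfrak{M}$. An alternative is the Ornstein--Uhlenbeck (Maurey--Pisier) interpolation, but that route loses the constant $1/2$.

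\textbf{Finiteness of the expectation.} This is the step I expect to be the main obstacle, because a.s.\ boundedness is only qualitative. By separability (in our setting the field is a.s.\ continuous), the supremum equals $\lim_n \max_{\eta\in F_n} X_\eta$ for an increasing sequence of finite sets $F_n$. Since $\sup X<\infty$ a.s., there is a level $m$ with $\prob{\sup X\le m}\ge 3/4$, and hence $\prob{\max_{F_n}X\le m}\ge 3/4$ for every $n$.
\begin{itemize}
\item Apply the two-sided version of the finite-dimensional concentration (which follows from the same Herbst argument applied to $-f$) with $t_0$ chosen so that $2e^{-t_0^2/(2\sigma_\mathfrak{M}^2)}<3/4$. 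On the event of probability at least $3/4$ where $\max_{F_n}X\le m$, concentration then forces $\expec{\max_{F_n}X}\le m+t_0$.
\item This bound is uniform in $n$, and $\sigma_\mathfrak{M}<\infty$ holds because $\sup X_\eta^2$ is controlled by a.s.\ boundedness of both $X$ and $-X$ (the latter by symmetry of the centred field).
\item Since $\max_{F_n}X\ge X_{\eta_1}$, which is integrable, monotone convergence gives $\expec{\sup_\mathfrak{M} X}=\lim_n\expec{\max_{F_n}X}<\infty$.
\end{itemize}

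\textbf{Passage to the limit.} For $u>\expec{\sup X}$, the events $\{\max_{F_n}X>u\}$ increase to $\{\sup X>u\}$, and $\expec{\max_{F_n}X}\uparrow\expec{\sup X}$. Each finite-$n$ bound is at most $\exp(-(u-\expec{\max_{F_n}X})^2/(2\sigma_\mathfrak{M}^2))$. Taking the limit in $n$, using continuity of probability from below, proves the inequality.
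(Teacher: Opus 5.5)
The paper does not prove this lemma; it imports it from Adler--Taylor (Theorem 2.1.1). Your argument is a correct, self-contained reconstruction of the standard textbook proof, with the same three-stage structure:
\begin{enumerate}
\item sharp concentration for the Lipschitz map $z\mapsto\max_i\abrac{a_i,z}$ on finite index sets;
\item a bound on $\expec{\max_{F_n}X}$, uniform in $n$, obtained by playing concentration against a level $m$ with $\prob{\sup X\le m}\ge 3/4$;
\item a monotone limit.
\end{enumerate}
The only substantive difference from the cited source is the finite-dimensional tool. You use the Gaussian log-Sobolev inequality with Herbst's argument. Adler--Taylor derive the same bound $\expec{e^{\lambda(f-\expec{f})}}\le e^{\lambda^2\sigma^2/2}$ from a Gaussian interpolation (Ornstein--Uhlenbeck covariance) identity, which avoids treating log-Sobolev as a black box and is equally sharp. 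So your aside that the Ornstein--Uhlenbeck route loses the constant applies only to the Maurey--Pisier rotation-plus-Jensen argument. Writing the proof out also has the merit of exposing hypotheses that the citation leaves implicit.

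Two steps each need one more line.

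\emph{Finiteness of $\sigma_{\mathfrak{M}}$.} This does not follow merely from a.s.\ finiteness of $\sup_\eta X_\eta^2$. Argue instead that $\prob{X_\eta\le m}\ge\prob{\sup X\le m}\ge 3/4$ for every $\eta$. For $X_\eta\sim\ncl(0,\sigma_\eta^2)$ this forces $m>0$ and $\sigma_\eta\le m/q$, where $q>0$ is the $3/4$-quantile of $\ncl(0,1)$; no symmetry argument is needed. This bound must be in place before you choose $t_0$.

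\emph{Reduction to finite sets.} Passing to the sets $F_n$ uses separability of the process, i.e.\ a countable $D$ with $\sup_{\mathfrak{M}}X=\sup_D X$ a.s. The statement for a general metric space leaves this implicit, and continuity delivers it only when $\mathfrak{M}$ is separable. In all of the paper's applications the field is continuous on compact balls of a manifold, so this is harmless there.
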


The following lemma allows us to conclude a certain type of non-degeneracy of the argmax shown in the corollary afterwards. The proof may be carried out similarly to the proof of \cite[Lemma A.1]{AurzadaHelmer2026}.

\begin{lem}\label{lem_NonPersistenceOnDomainsWithoutZero} 
    Let $(X_H(\eta))_{\eta\in\hyp_{d}}$ be the hyperbolic fractional L\'evy field and let $A$ be a compact subset of $\hyp_{d}$, such that $\inf_{\eta\in A} \expec{X_H(\eta)^2} > 0$. Then with positive probability the field is non-positive on $A$, i.e.
    \begin{align*}
        \prob{X_H(\eta) \leq 0 ~ \forall \eta\in A} > 0.
    \end{align*}
\end{lem}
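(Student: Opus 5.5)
We prove $\prob{X_H(\eta)\le 0~\forall\eta\in A}>0$ by Gaussian conditioning: $A$ is compact and the field is non-degenerate on it, so it can be pushed below zero on all of $A$ with a deterministic shift. The shift comes from conditioning on a single value, and the plan follows the proof of \cite[Lemma A.1]{AurzadaHelmer2026}.

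\textbf{Step 1 (reduction via a mean shift).} By Lemma~\ref{lem_epsBallSupEstimate} the field is a.s.\ continuous, hence a.s.\ bounded on the compact set $A$. Suppose we find a centred Gaussian variable $Y$ and a deterministic continuous $h$ with $h\ge c>0$ on $A$ such that $X_H(\eta)=Y h(\eta)+Z(\eta)$, where $Z$ is a centred, a.s.\ continuous Gaussian field independent of $Y$. Then, with $M:=\sup_A Z<\infty$ a.s., independence gives
\begin{align*}
\prob{X_H\le 0 \text{ on } A}\ \ge\ \prob{Y\le -M/c}\ =\ \expec{\Phi_Y(-M/c)}>0,
\end{align*}
since $\Phi_Y(-M/c)>0$ a.s. The natural candidate for the decomposition is the conditional (regression) decomposition
\begin{align*}
Y:=X_H(\zeta),\qquad h(\eta):=\frac{\expec{X_H(\eta)X_H(\zeta)}}{\expec{X_H(\zeta)^2}},\qquad Z:=X_H-hY,
\end{align*}
for a well chosen point $\zeta$. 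Then $Z$ is continuous and independent of $Y$ by Gaussianity.

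\textbf{Step 2 (choice of $\zeta$; the main obstacle).} We must guarantee $\inf_A h>0$, i.e.\ that $\expec{X_H(\eta)X_H(\zeta)}=\tfrac12\bigl(d(\eta,O)^{2H}+d(\zeta,O)^{2H}-d(\eta,\zeta)^{2H}\bigr)$ is bounded below on $A$. I would take $\zeta$ far from $O$, on a geodesic ray from $O$, at distance $R\to\infty$. The hypothesis $\inf_A\expec{X_H^2}>0$ means $O\notin A$, so $d(\eta,O)^{2H}\ge a>0$ on $A$. Since $A$ is bounded, the triangle inequality gives $d(\eta,\zeta)\le R+D$ with $D:=\sup_A d(\cdot,O)$, and therefore
\begin{align*}
R^{2H}-d(\eta,\zeta)^{2H}\ \ge\ R^{2H}-(R+D)^{2H}\ \ge\ -2H D R^{2H-1}\ \to\ 0,
\end{align*}
because $H\le 1/2<1$ (in fact any $H<1$ suffices). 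Hence the covariance is at least $a/2-o(1)\ge a/4$ for $R$ large. Consequently $h\ge \tfrac{a}{4}R^{-2H}>0$ on $A$ uniformly, and $\expec{Y^2}=R^{2H}>0$, so $Y$ is non-degenerate. The main technical point is exactly this estimate: it uses the unboundedness of $\hyp_d$ to push $\zeta$ away. On the sphere one would instead have used an antipodal-type argument, and the hyperbolic case is in this respect simpler.

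\textbf{Step 3 (conclusion).} Check that $h$ is continuous, which is clear from the distance function, and that $Z$ is a.s.\ continuous, since it is $X_H$ minus a continuous multiple of $Y$. Step 1 then yields the claim.
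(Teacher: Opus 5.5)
Your argument has a genuine gap at $H=1/2$. That case is covered by the lemma, since the hyperbolic field exists for $0<H\le 1/2$ and $H=1/2$ is Faraut's L\'evy Brownian field. There $-2HDR^{2H-1}=-D$ does not tend to $0$. (Your aside that any $H<1$ suffices is also wrong: for $H>1/2$ this term diverges.) This is not just a weak estimate. For $H=1/2$ one has $2\expec{X_H(\eta)X_H(\zeta)}=d(\eta,O)+d(\zeta,O)-d(\eta,\zeta)$, which vanishes exactly when $O$ lies on the geodesic segment from $\eta$ to $\zeta$. This is the analogue of the independence of $B(t)$ and $B(-s)$ for two-sided Brownian motion. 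Geodesics in $\hyp_d$ are globally minimising, so for every $\zeta\neq O$ the geodesic from $\zeta$ through $O$, continued past $O$, meets sets such as the annulus $A=\olb_T(O)\setminus\bb_\delta(O)$ used in Corollary~\ref{cor_nonDegeneracyArgmax}. At such points $h(\eta)=0$. Hence no single regressor $Y=X_H(\zeta)$ gives $\inf_A h>0$, whatever $\zeta$ you pick. Your proof is valid only for $H<1/2$, where the concavity estimate is correct; there the far-away point in $\hyp_d$ is a neat way to avoid any comparison inequality.

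The missing ingredient is the one the paper points to in Remark~\ref{rem_proofOfDomainsWithoutZeroAndPositiveCorrelations}: compactness of $A$ together with non-negative correlations. Localise your Step 1 as follows.
\begin{enumerate}
\item For $\zeta\in A$, regress on $X_H(\zeta)$ itself. Then $h_\zeta(\eta)=\expec{X_H(\eta)X_H(\zeta)}/d(\zeta,O)^{2H}$ is continuous with $h_\zeta(\zeta)=1$.
\item Hence $h_\zeta\ge 1/2$ on a small ball around $\zeta$, and Step 1 gives positive probability that $X_H\le 0$ on $A$ intersected with that ball.
\item By compactness, finitely many such balls cover $A$.
\item Since $\expec{X_H(\eta)X_H(\zeta)}\ge 0$ for $2H\le 1$ (triangle inequality plus subadditivity of $x\mapsto x^{2H}$), iterate Corollary~\ref{cor_SlepianPosCorr} with $f\equiv 0$. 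This bounds $\prob{X_H\le 0 \text{ on } A}$ below by a finite product of positive numbers.
\end{enumerate}
Separately, fix a slip in Step 1. If $M=\sup_A Z<0$ and $h$ is not constant, the event $\{Y\le -M/c\}$ allows $Y>0$ and does not force $X_H\le 0$ on $A$. Use $\{Y\le -M^+/c\}$ instead, which still has positive probability.
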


The following corollary provides an essential non-degeneracy condition for the main proof of this section.

\begin{cor}\label{cor_nonDegeneracyArgmax}
    The argmax for the hyperbolic fractional L\'evy field is non-degenerate in the sense that for any $\delta > 0$ we have that
    \begin{align*}
        \prob{\tau_{\olb_{T}(O)} \in \bb_\delta(O)} > 0.
    \end{align*}
\end{cor}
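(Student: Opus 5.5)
We reduce the claim to Lemma~\ref{lem_NonPersistenceOnDomainsWithoutZero} by exhibiting an event of positive probability on which the argmax is forced into $\bb_\delta(O)$. Without loss of generality $\delta < T$; otherwise $\bb_\delta(O)\supseteq \olb_T(O)$ up to the boundary and the claim is even easier (or we simply shrink $\delta$). Set
\begin{align*}
    A := \olb_T(O)\setminus \bb_\delta(O).
\end{align*}
This set is compact, because it is a closed subset of the compact ball $\olb_T(O)$; compactness of $\olb_T(O)$ follows as in the proof of Theorem~\ref{thm_densityArgmaxRiemannian}, or directly since $\hyp_d$ is complete. Moreover, for $\eta\in A$ we have $\expec{X_H(\eta)^2} = d(\eta,O)^{2H} \geq \delta^{2H} > 0$. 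Hence the hypothesis of Lemma~\ref{lem_NonPersistenceOnDomainsWithoutZero} is satisfied, and the event
\begin{align*}
    E := \{X_H(\eta)\leq 0 ~ \forall \eta\in A\}
\end{align*}
has positive probability.

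The main point is to compare this with the value at $O$. On $E$ we have $\sup_{\eta\in A} X_H(\eta) \leq 0 = X_H(O)$, and $O\in \bb_\delta(O)\cap\olb_T(O)$. The maximum over $\olb_T(O)$ is therefore at least $0$ and is attained in $\bb_\delta(O)$, except possibly when it is simultaneously attained at a point of $A$ with value $0$. That exceptional case would give two distinct maximisers, namely $O$ (or some point of $\bb_\delta(O)$) and a point of $A$. By Corollary~\ref{cor_densityArgmaxHyperbolic}, more precisely by the a.s.\ uniqueness of the maximum shown in its proof via Lemma~\ref{lem_uniqueMaximum}, this has probability zero. So, up to a null set, $E\subseteq\{\tau_{\olb_T(O)}\in\bb_\delta(O)\}$, and consequently
\begin{align*}
    \prob{\tau_{\olb_T(O)} \in \bb_\delta(O)} \geq \prob{E} > 0.
\end{align*}

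The only delicate step is the tie case just described, in which the maximum equals $0$ and is attained both at $O$ and somewhere on $A$; the a.s.\ uniqueness of the argmax disposes of it. All the real probabilistic content is contained in Lemma~\ref{lem_NonPersistenceOnDomainsWithoutZero}, which we are allowed to assume.
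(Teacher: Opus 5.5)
Your proof is correct. It shares the paper's first step and differs in the second.

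Both arguments begin by applying Lemma~\ref{lem_NonPersistenceOnDomainsWithoutZero} to $A=\olb_T(O)\setminus\bb_\delta(O)$. The paper then restricts the field to a geodesic segment through $O$ inside $\bb_\delta(O)$ and identifies it with a one-dimensional fractional Brownian motion. Using the law of the iterated logarithm, it gets a.s.\ a point of $\bb_\delta(O)$ where the field is strictly positive. On the event $E$ this forces $\max>0\geq\sup_A X_H$, so \emph{every} maximiser lies in $\bb_\delta(O)$, with no appeal to uniqueness.

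You use only $X_H(O)=0$, which gives $\max\geq 0\geq\sup_A X_H$. You then rule out the tie $X_H(\tau)=0=X_H(O)$ with $\tau\in A$ by the a.s.\ uniqueness of the maximiser from Lemma~\ref{lem_uniqueMaximum}. That lemma applies here, including for pairs involving $O$, since $\var{X_H(\eta)-X_H(\zeta)}=d(\eta,\zeta)^{2H}>0$ for $\eta\neq\zeta$.

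Your route is more economical. It needs no external law of the iterated logarithm and no reparametrisation along a geodesic. It relies only on a fact already required for $\tau_{\olb_T(O)}$ to be well defined.

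The paper's route gives slightly more: on $E$ the maximum is a.s.\ strictly positive. It would also survive if the argmax were defined by a selection rule rather than through uniqueness.
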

\begin{proof}
    An application of Lemma \ref{lem_NonPersistenceOnDomainsWithoutZero} yields that there is a positive probability for the process to be less than zero anywhere in $\olb_{T}(O)$ outside of a geodesic $\delta$-ball around $O$, i.e.\
    \begin{align*}
        \prob{X_H(\eta) \leq 0 ~ \forall \eta \in \olb_T(O)\setminus \bb_\delta(O) } > 0.
    \end{align*}
     Now take a piece of a geodesic line passing through $O$ inside of $\bb_\delta(O)$. By reparametrization this can be viewed as one-dimensional fractional Brownian motion with $H\leq 1/2$, so there is a.s.\ a point larger than zero on this line. This follows from the law of iterated logarithm for fractional Brownian motion on the real line, cf.\ \cite{Peyre2017} or \cite[Cor.\ 3.1]{Arcones1995}. Thus, the probability that the maximum lies in $\bb_\delta(O)$ is bounded from below by the probability inferred from Lemma \ref{lem_NonPersistenceOnDomainsWithoutZero}, i.e.\
     \begin{gather*}
         \prob{\tau_{\olb_{T}(O)} \in \bb_\delta(O) } \geq \prob{X_H(\eta) \leq 0 ~ \forall \eta \in \olb_T(O)\setminus \bb_\delta(O) } > 0.\qedhere
     \end{gather*}
\end{proof}

Now we can prove the following lower bound, which follows the steps of \cite[Lemma 4]{Molchan99} closely.

\begin{lem}\label{lem_hfbmLowerBound}
    Let $R>0$ be an arbitrary finite radius. For the hyperbolic fractional L\'evy field on $\hyp_{d}$ there exists a constant $C_2$, such that, for all $\eps>0$ small enough,
    \begin{align*}
        \prob{\sup_{\eta\in\bb_R(O)} X_H(\eta) < \eps} \geq C_2 ~ \eps^{\frac{d}{H}} ~ \left(\sqrt{-\log \eps}\right)^{-\frac{d}{H}}.
    \end{align*}
\end{lem}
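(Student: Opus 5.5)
We follow Molchan's argument \cite[Lem.\ 4]{Molchan99}. The idea is to bound the persistence event from below by the event that the argmax over a large ball falls very close to $O$ and the field is small near $O$. For $\hyp_d$ we have $\inj\equiv\infty$, and by Lemma~\ref{lem_ballVolumeBoundsGeneralRiemann} any finite radius is admissible for $\kappa\le 0$. Hence Corollary~\ref{cor_densityArgmaxHyperbolic} gives a density of $\tau_{\olb_T(O)}$ on $\bb_T(O)$ for every finite $T$. We fix $T:=R+2$.

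\textbf{Step 1 (a good density point).} By Corollary~\ref{cor_nonDegeneracyArgmax} with $\delta=1$ we have $\prob{\tau_{\olb_T(O)}\in\bb_1(O)}>0$. Lemma~\ref{lem_existenceDensityPoint} applied with $A:=\bb_1(O)$ then yields $\eta\in\bb_1(O)$ and $c>0$ such that $\prob{\tau_{\olb_T(O)}\in\bb_\delta(\eta)}>c\,\delta^d$ for all small $\delta$.

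\textbf{Step 2 (transport the density point to $O$).} Apply Remark~\ref{rem_fundamentalArgmaxShiftSeparate} with $\zeta=\eta$, $\theta'=\psi_\eta^{-1}(O)$, together with Remark~\ref{rem_isometricActionOnBalls}. This gives
\begin{align*}
\prob{\tau_{\olb_T(\theta')}\in\bb_\delta(O)}=\prob{\tau_{\olb_T(O)}\in\bb_\delta(\eta)}>c\,\delta^d.
\end{align*}
Since $d(\theta',O)=d(O,\eta)<1$, we get $\bb_R(O)\subseteq\olb_T(\theta')$.

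\textbf{Step 3 (reduce to the sup near $O$).} Suppose $\tau:=\tau_{\olb_T(\theta')}\in\bb_\delta(O)$ and $\sup_{\olb_\delta(O)}X_H<\eps$. Then
\begin{align*}
\sup_{\bb_R(O)}X_H\le X_H(\tau)\le \sup_{\olb_\delta(O)}X_H<\eps .
\end{align*}
Therefore
\begin{align*}
\prob{\sup_{\bb_R(O)}X_H<\eps}\ge c\,\delta^d-\prob{\sup_{\olb_\delta(O)}X_H\ge\eps}.
\end{align*}

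\textbf{Step 4 (Borell--TIS and choice of $\delta$).} We use Lemma~\ref{lem_epsBallSupEstimate} to bound the expectation and note $\sigma^2_{\olb_\delta(O)}=\delta^{2H}$. Borell--TIS (Lemma~\ref{lem_borellTIS}) then gives
\begin{align*}
\prob{\sup_{\olb_\delta(O)}X_H\ge\eps}\le\exp\Big(-\big(\eps-\delta^H\sqrt{C_1\log(1/\delta)}(1+\odcl(1))\big)^2/(2\delta^{2H})\Big).
\end{align*}
We choose $\delta^H:=\eps/\big(M\sqrt{\log(1/\eps)}\big)$ with a large constant $M$. Then $\log(1/\delta)\approx H^{-1}\log(1/\eps)$, and for $M$ large the exponent is at least $(M^2/4)\log(1/\eps)$. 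So the error term is at most $\eps^{M^2/4}$, which is $\odcl(\delta^d)$ once $M^2/4>d/H$.

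This leaves $\tfrac c2\,\delta^d=C_2\,\eps^{d/H}(\log(1/\eps))^{-d/(2H)}$, which is the claimed bound. The main obstacle is Step 2: the density point is not $O$ itself, so we must transport it to $O$ via the homogeneous shift. This forces centring the large ball at $\theta'\neq O$ and choosing $T$ large enough that the shifted ball still covers $\bb_R(O)$. That is legitimate only because on $\hyp_d$ the argmax density holds for arbitrarily large $T$.
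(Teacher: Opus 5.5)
Your proof is correct and follows essentially the paper's route, which is Molchan's argument \cite[Lem.\ 4]{Molchan99}: a density point of the argmax near $O$ (Corollary~\ref{cor_densityArgmaxHyperbolic}, Corollary~\ref{cor_nonDegeneracyArgmax}, Lemma~\ref{lem_existenceDensityPoint}), transported to $O$ by the stationary-increment shift and combined with Borell--TIS on a small ball around $O$. The differences are cosmetic: you fix the level $\eps$ and tune the radius via $\delta^H=\eps/(M\sqrt{\log(1/\eps)})$, which avoids the paper's final substitution $\eps'=\eps\sqrt{-c\log\eps}$, and you bound $\sup_{\bb_R(O)}X_H$ by the maximum over the larger shifted ball $\olb_T(\theta')$ rather than returning to $\tau_{\olb_R(O)}$ through the paper's nesting $\olb_R(\xi)\subseteq\olb_{R+\delta}(O)$.
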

\begin{proof}
    We are going to use the argmax of the hyperbolic fractional L\'evy field to bound the maximum from below. 
    We start by fixing some constant $c > 0$ to be chosen later and calculate
    \begin{align}\label{eq_mainLowerBound1}
        & \prob{ \tau_{\olb_R(O)} \in \bb_{\eps^{1/H}}(O)} \notag\\
        & = \prob{\sup_{\eta\in\bb_R(O)} X_H(\eta) < \eps \sqrt{- c \log \eps}, ~ \tau_{\olb_R(O)} \in \bb_{\eps^{1/H}}(O)} \notag\\
        & \qquad + \prob{\sup_{\eta\in\bb_R(O)} X_H(\eta) \geq \eps \sqrt{- c \log \eps}, ~ \tau_{\olb_R(O)} \in \bb_{\eps^{1/H}}(O)} \notag \\
        & \leq \prob{\sup_{\eta\in\bb_R(O)} X_H(\eta) < \eps \sqrt{- c \log \eps}} + \prob{\sup_{\eta\in\bb_R(O)} X_H(\eta) \geq \eps \sqrt{- c \log \eps}, ~ \tau_{\olb_R(O)} \in \bb_{\eps^{1/H}}(O)}.
    \end{align}

    \textit{Step 1:} We are first going to prove that the second term at the end of \eqref{eq_mainLowerBound1} decays at a polynomial rate depending on $c$. To this end, we apply Lemma \ref{lem_borellTIS} to obtain the estimate
    \begin{align}\label{eq_mainLowerRemainder1}
        & \prob{\sup_{\eta\in\bb_R(O)} X_H(\eta) > \eps \sqrt{- c \log \eps}, ~ \tau_{\olb_R(O)} \in \bb_{\eps^{1/H}}(O)}
        \leq \prob{\sup_{\eta\in\olb_{\eps^{1/H}}(O)} X_H(\eta) > \eps \sqrt{- c \log \eps}} \notag \\
        & \leq \exp\left(-\left(\eps \sqrt{- c \log \eps} - \expec{\sup_{\eta\in\olb_{\eps^{1/H}}(O)} X_H(\eta)}\right)^2 / (2\sigma_{\olb_{\eps^{1/H}}(O)}^2)\right),
    \end{align}
    which holds as long as $\eps \sqrt{- c \log \eps} > \expec{\sup_{\eta\in\olb_{\eps^{1/H}}(O)} X_H(\eta)}$ is true. We can check this using Lemma \ref{lem_epsBallSupEstimate}. Choosing $c>C_1/H$ we can calculate that
    \begin{align*}
        \eps \sqrt{- c \log \eps} - \expec{\sup_{\eta\in\olb_{\eps^{1/H}}(O)} X_H(\eta)}
        & \geq \eps \sqrt{- c \log \eps} - \eps \sqrt{- C_1/H \log \eps} + \ocl\left(\frac{\eps}{\sqrt{-\log \eps}}\right) \\
        & = \eps \sqrt{- \left(\sqrt{c} - \sqrt{C_1/H}\right)^2 \log \eps} + \ocl\left(\frac{\eps}{\sqrt{-\log \eps}}\right),
    \end{align*}
    which is greater than zero for all $\eps$ small enough and thus justifies the application of the Borell-TIS inequality in \eqref{eq_mainLowerRemainder1}. Noting that
    \begin{align*}
        \sigma_{\olb_{\eps^{1/H}}(O)}^2 = \sup_{\eta\in\olb_{\eps^{1/H}}(O)} \expec{X_H(\eta)^2} = \eps^2,
    \end{align*}
    we can continue from \eqref{eq_mainLowerRemainder1} and infer that
    \begin{align*}
        & \exp\left(-\left(\eps \sqrt{- c \log \eps} - \expec{\sup_{\eta\in\olb_{\eps^{1/H}}(O)} X_H(\eta)}\right)^2 / (2\sigma_{\olb_{\eps^{1/H}}(O)}^2)\right) \\
        & \leq \exp\left(-\left( \eps \sqrt{- \left(\sqrt{c} - \sqrt{C_1/H}\right)^2 \log \eps} + \mathcal{O}\left(\frac{\eps}{\sqrt{-\log \eps}}\right) \right)^2 / (2\eps^2)\right) \\
        & = \exp\left(-\left( \sqrt{- \left( \frac{\sqrt{c} - \sqrt{C_1/H}}{\sqrt{2}}\right)^2 \log \eps} + \mathcal{O}\left(\frac{1}{\sqrt{-\log \eps}}\right) \right)^2 \right) \\
        & = \exp\left(\left( \frac{\sqrt{c} - \sqrt{C_1/H}}{\sqrt{2}}\right)^2 \log \eps + \mathcal{O}(1) + \mathcal{O}\left(\frac{1}{-\log \eps}\right) \right)\in \ocl\left(\eps^{\left( \frac{\sqrt{c} - \sqrt{C_1/H}}{\sqrt{2}}\right)^2}\right),
    \end{align*}
    which proves that, indeed, $\prob{\sup_{\eta\in\bb_R(O)} X_H(\eta) > \eps \sqrt{- c \log \eps}, ~ \tau_{\olb_R(O)} < \eps^{1/H}}$ can be made to decay as arbitrarily fast polynomially in $\eps$, since $c>0$ can be made arbitrarily large.

    \textit{Step 2:} We are now going to show a lower bound of $\prob{\tau_{\olb_R(O)} < \eps^{1/H}}$. First, choose some $0<\delta < R$. Since $\inj(\hyp_d)=\inj(\mcl_{-1})=\infty$ (cf.\ \eqref{eq_modelSpace_injRadius}) and since any radius $r>0$ from Lemma~\ref{lem_ballVolumeBoundsGeneralRiemann} is admissible, we may choose $T:=R+\delta$ and apply Corollary~\ref{cor_densityArgmaxHyperbolic}. This yields that $\tau_{\olb_{R+\delta}(O)}$ possesses a Riemannian density in $\bb_{R+\delta}(O)$. Corollary~\ref{cor_nonDegeneracyArgmax} shows that there is actually mass of the $\argmax$ distribution in $\bb_{\delta}(O)$ and so by Lemma~\ref{lem_existenceDensityPoint} there exists a point of positive density $\xi\in\bb_\delta(O)$, i.e.\
    \begin{align*}
        \prob{\tau_{\olb_{R+\delta}(O)} \in \bb_{\eps^{1/H}}(\xi)} \geq \ol{c} ~ \eps^{\frac{d}{H}}
    \end{align*}
    for some constant $\ol{c} > 0$ and $\eps > 0$ small enough. Using the stationarity of increments, we may apply Remark~\ref{rem_fundamentalArgmaxShiftSeparate} with $\theta':= O$, $\zeta':= \xi$ and $T':=R$ to obtain that
    \begin{align*}
        \prob{\tau_{\olb_R(O)} \in \bb_{\eps^{1/H}}(O)}
        & = \prob{\psi_\xi^{-1}\left(\tau_{\olb_R(\psi_\xi(O))}\right) \in \bb_{\eps^{1/H}}(O)}
        = \prob{\tau_{\olb_R(\xi)} \in \bb_{\eps^{1/H}}(\xi)} \\
        & \geq \prob{\tau_{\olb_{R+\delta}(O)} \in \bb_{\eps^{1/H}}(\xi)}
        \geq \ol{c} ~ \eps^{\frac{d}{H}}.
    \end{align*}
    Finally, the result is obtained by plugging in the results from Step 1 \& Step 2 into \eqref{eq_mainLowerBound1} and by substituting $\eps':= \eps \sqrt{-c \log\eps}$.
\end{proof}

\subsection{Upper bound}\label{sec_upperboundHFBM}

The proof of the upper bound will be carried out as a comparison argument combining Slepian's Lemma from stochastics and Toponogov's theorem from geometry. The idea is the same as in \cite[Sec.\ 5.1 \& 5.2]{AurzadaHelmer2026}, which also offers a heuristic description of the geometric tool involved. Here, we present a more geometric approach.

Let $\exp_O:\R^d \to\hyp_d$ be the exponential map (cf.\ \eqref{eq_expMapDef}). This is a diffeomorphism between the tangent space of the hyperboloid at $O$, which may be identified with $\R^d$, and the hyperboloid itself. The map preserves lengths that are inside the injectivity radius and follow a straight / geodesic line from the origin (cf.\ \eqref{eq_expMapDef} and the explanation thereafter). Other lengths are not necessarily preserved, since the exponential map is not an isometry in general. By \eqref{eq_modelSpace_injRadius}, the injectivity radius at every point of the hyperboloid is infinite. Thus, for $t\in\R^d$ we have
\begin{align*}
    \norm{t}_\mcl = d(O, \exp_O(t)),
\end{align*}
cf.\ \eqref{eq_expMapDistancePreserving}. Since we only need the Riemannian metric in a single tangent space, we can choose an orthonormal base, with respect to  which the scalar product becomes the usual Euclidean scalar product in this tangent space, i.e.\ with the usual Euclidean distance $\norm{.}$,
\begin{align}\label{eq_exponentialMapEqualities}
    \norm{t} = d(O, \exp_O(t)).
\end{align}
Since the exponential map is a diffeomorphism we can plug in $t:=\exp_O^{-1}(\eta)$ for any $\eta\in\hyp_d$ and obtain that
\begin{align*}
    \norm{\exp^{-1}_O(\eta)} = d(O, \eta).
\end{align*}

Thus, if we now have two points $\eta,\zeta\in\hyp_d$, we know that $\exp_O^{-1}$ maps them to Euclidean space while keeping the distance to the respective \emph{origins} the same. We are now interested in how the distance $d(\eta,\zeta)$ changes through this mapping.

To this end, we may apply the following simplified version of the classical Toponogov (hinge) comparison theorem (cf.\ \cite[Thm.\ 12.2.2]{Petersen2016} or \cite[Thm.\ IX.5.1]{Chavel2006}).

\begin{lem}[Toponogov's Theorem]\label{lem_Toponogov}
    For any two points $\eta,\zeta\in\hyp_{d}$ it is true that
    \begin{align*}
        d(\eta,\zeta) \geq \norm{\exp_O^{-1}(\eta) - \exp_O^{-1}(\zeta)}.
    \end{align*}
\end{lem}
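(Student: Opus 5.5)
We prove the inequality directly in the hyperboloid model via the hyperbolic law of cosines, rather than through the general Toponogov machinery. Write $a := d(O,\eta)$ and $b := d(O,\zeta)$, and let $\alpha\in[0,\pi]$ be the angle at $O$ between the unit-speed geodesics from $O$ to $\eta$ and from $O$ to $\zeta$. Since $\exp_O$ is a diffeomorphism with $\inj(\hyp_d)=\infty$, these geodesics are unique, and $\exp_O^{-1}(\eta)=a\,u$, $\exp_O^{-1}(\zeta)=b\,v$ for unit vectors $u,v\in T_O\hyp_d\cong\R^d$ with $\abrac{u,v}=\cos\alpha$. This uses \eqref{eq_exponentialMapEqualities} and the chosen orthonormal basis. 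The Euclidean law of cosines then gives
\begin{align*}
    \norm{\exp_O^{-1}(\eta)-\exp_O^{-1}(\zeta)}^2 = a^2+b^2-2ab\cos\alpha .
\end{align*}
So the claim reduces to showing that $c:=d(\eta,\zeta)$ satisfies $c^2\geq a^2+b^2-2ab\cos\alpha$.

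To get a formula for $c$, use the isometries of Example~\ref{expl_shortestDistancesAndIsometriesModelSpaces} to assume $O=(1,0,\ldots,0)$. In this case $T_O\hyp_d=\{0\}\times\R^d$, and the geodesics are $\exp_O(t w)=\cosh(t)\,O+\sinh(t)\,w$ for unit $w$. Then the formula $d(\eta,\zeta)=\operatorname{arccosh}(-\eta\circ\zeta)$ yields
\begin{align*}
    \cosh c = \cosh a\cosh b - \sinh a\sinh b\cos\alpha ,
\end{align*}
which is the hyperbolic law of cosines.

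Next, fix $a,b\geq 0$. Define $e(\alpha)\geq 0$ by $e^2=a^2+b^2-2ab\cos\alpha$, and define $c(\alpha)$ by the formula above. Both functions are increasing in $\alpha$, and since $\cosh$ is increasing on $[0,\infty)$, it suffices to show
\begin{align*}
    \cosh a\cosh b-\sinh a\sinh b\cos\alpha \;\geq\; \cosh\!\bigl(\sqrt{a^2+b^2-2ab\cos\alpha}\bigr).
\end{align*}
Put $x:=\cos\alpha\in[-1,1]$ and $g(x):=\cosh\sqrt{a^2+b^2-2abx}$. At the endpoints equality holds: $x=\pm1$ gives $\cosh(a\mp b)$ on both sides. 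The left side is affine in $x$. If $g$ is convex in $x$, then the chord of $g$ between $x=-1$ and $x=1$ lies above the graph, and since that chord is exactly the left side, the inequality follows.

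The main obstacle is proving this convexity. Write $s=a^2+b^2-2abx$, which is affine in $x$. Then $g=h(s)$ with $h(s)=\cosh\sqrt{s}=\sum_k s^k/(2k)!$. This power series has nonnegative coefficients, so $h$ is convex on $s\geq0$. A convex function composed with an affine map is convex, so $g$ is convex in $x$, and the argument closes.

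The degenerate cases are immediate. If $a=0$ or $b=0$, both sides equal $b$ or $a$ respectively, and then $\alpha$ is irrelevant.

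If one instead prefers to cite the paper's references directly, the statement is the hinge version of Toponogov's theorem with lower curvature bound replaced by upper bound $K\leq 0$. This is really Rauch/CAT(0) comparison, since $\hyp_d$ is a Hadamard manifold. The computation above gives a self-contained verification in the model space.
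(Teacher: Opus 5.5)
Your proof is correct, but it takes a different route from the paper. The paper gives no computation: it cites the hinge version of Toponogov's comparison theorem (Petersen, Thm.~12.2.2; Chavel, Thm.~IX.5.1). In the usual lower-curvature formulation, one applies it to $\R^d$, whose curvature $0$ is $\geq -1$, with $\hyp_d$ as the comparison space. The Euclidean hinge then closes no farther than the hyperbolic hinge with the same side lengths and angle, which is exactly the claim. Your remark that this is equivalently the CAT(0)/Rauch comparison for the Hadamard manifold $\hyp_d$ is also right.

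You instead verify the inequality by hand in the model. The parametrisation $\exp_O(tw)=\cosh(t)\,O+\sinh(t)\,w$ together with $d(\eta,\zeta)=\operatorname{arccosh}(-\eta\circ\zeta)$ gives the hyperbolic law of cosines. The inequality $\cosh a\cosh b-\sinh a\sinh b\,x\geq\cosh\sqrt{a^2+b^2-2abx}$ on $[-1,1]$ then follows by a chord argument. The right-hand side is convex in $x$, because $s\mapsto\cosh\sqrt{s}=\sum_k s^k/(2k)!$ is convex for $s\geq 0$ and $s$ is affine in $x$. It also agrees with the affine left-hand side at $x=\pm 1$. The endpoint identities $\cosh(a\mp b)$, the reduction via monotonicity of $\cosh$ on $[0,\infty)$, and the degenerate cases $a=0$ or $b=0$ all check out. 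The remark that both sides are increasing in $\alpha$ is harmless but not needed.

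Your approach buys a fully elementary, self-contained proof using only formulas already in the paper. After rescaling lengths, it extends verbatim to every $\mcl_\kappa$ with $\kappa<0$. The citation buys uniformity with the rest of the paper. The same comparison principle, in its local Rauch form (Lemma~\ref{lem_rauchToponogovComparisonEstimate}), drives Section~\ref{sec_persistenceGeneralRiemann} for general manifolds. There no law of cosines is available, so your computation would not carry over.
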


Compared to \cite[Thm.\ 5.3]{AurzadaHelmer2026} note that this is the exact opposite inequality. 
For a generalised local version, cf.\ Lemma~\ref{lem_rauchToponogovComparisonEstimate}.

The importance to us is the fact that we can turn this geometric comparison inequality into a stochastic comparison inequality using the following well known lemma by Slepian (cf.\ \cite{Slepian62}). Its restatement in continuous time can be found, for example, in \cite[Lemma 1.2.5]{Baumgarten2013thesis}.

\begin{lem}[Slepian's Lemma]\label{lem_Slepian}
    Let $(E,d)$ be a metric space and let $(X_t)_{t\in E}$ and $(Y_t)_{t\in E}$ denote two real-valued, separable, centred Gaussian processes, such that
    \begin{align*}
        \expec{X_t^2} & = \expec{Y_t^2}, \quad \forall t\in E,
        \qquad\qquad
        \expec{X_s X_t}  \leq \expec{Y_s Y_t}, \quad \forall s,t\in E.
    \end{align*}
    Let $f: E\to \R$ be a measurable function that is continuous except for countably many points. Then
    \begin{align*}
        \prob{X_t \leq f(t) \quad \forall t\in E} \quad \leq \quad \prob{Y_t \leq f(t) \quad \forall t\in E}.
    \end{align*}
\end{lem}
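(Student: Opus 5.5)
The plan is to prove the statement first for finitely many index points via the classical Gaussian interpolation argument, and then pass to the whole index set $E$ using separability together with the mild regularity assumed for $f$.

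\emph{Finite-dimensional case.} Fix points $t_1,\ldots,t_n\in E$ and put $f_i:=f(t_i)$. Take copies of $(X_{t_i})_i$ and $(Y_{t_i})_i$ that are independent of each other. For $\theta\in[0,1]$ define
\begin{align*}
    Z_i(\theta):=\sqrt{1-\theta}\,X_{t_i}+\sqrt{\theta}\,Y_{t_i}.
\end{align*}
Choose smooth non-increasing functions $g_i^{(k)}$ with $0\le g_i^{(k)}\le 1$ that decrease pointwise to $\ind_{(-\infty,f_i]}$ as $k\to\infty$. Set $G:=\prod_i g_i^{(k)}$ and $\Phi(\theta):=\expec{G(Z(\theta))}$. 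Gaussian integration by parts (Piterbarg's formula) gives
\begin{align*}
    \Phi'(\theta)=\eh\sum_{i,j}\left(\expec{Y_{t_i}Y_{t_j}}-\expec{X_{t_i}X_{t_j}}\right)\expec{\partial_{ij}G(Z(\theta))}.
\end{align*}
The diagonal terms vanish because the variances agree. For $i\neq j$ the covariance difference is $\ge 0$ by assumption. Moreover $\partial_{ij}G=(g_i^{(k)})'(g_j^{(k)})'\prod_{l\ne i,j}g_l^{(k)}\ge 0$, since it contains a product of two non-positive derivatives. Hence $\Phi(0)\le\Phi(1)$. Letting $k\to\infty$ by monotone convergence yields
\begin{align*}
    \prob{X_{t_i}\le f_i~\forall i}\le\prob{Y_{t_i}\le f_i~\forall i}.
\end{align*}
This is the standard argument, and one only has to check that the smoothing is compatible with non-strict inequalities, which the decreasing approximation ensures.

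\emph{Passage to $E$.} Let $S_X$ and $S_Y$ be countable separants for $X$ and $Y$, and let $D_f$ be the countable set of discontinuity points of $f$. Set $D:=S_X\cup S_Y\cup D_f$, which is countable, and enumerate it as $D=\{s_1,s_2,\ldots\}$.

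I claim that, almost surely,
\begin{align*}
    \{X_t\le f(t)~\forall t\in E\}=\{X_s\le f(s)~\forall s\in D\},
\end{align*}
and likewise for $Y$. The inclusion "$\subseteq$" is trivial. For the converse, take $t\notin D$, so $f$ is continuous at $t$. Separability means that, outside a null set, the point $(t,X_t)$ lies in the closure of the graph $\{(s,X_s):s\in S_X\}$. Hence there are $s_m\in S_X$ with $s_m\to t$ and $X_{s_m}\to X_t$. Since $X_{s_m}\le f(s_m)$ for all $m$ and $f(s_m)\to f(t)$, we get $X_t\le f(t)$.

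With this identification, the events $\{X_{s_i}\le f(s_i),~i\le n\}$ decrease in $n$ to the event over $D$, and the same holds for $Y$. Applying the finite-dimensional inequality for each $n$ and using continuity of probability from above gives the claim.

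\emph{Main obstacle.} I expect the delicate point to be this separability step. One must use the graph-closure form of separability, so that a single null set works for all $t$ simultaneously, rather than a pointwise version. One must also make sure that the discontinuity points of $f$, where the limiting argument fails, are exactly the ones added to $D$. This is precisely why the hypothesis that $f$ is continuous except at countably many points is needed.
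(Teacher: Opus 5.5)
Your argument is correct. The paper gives no proof of this lemma; it quotes it from \cite{Slepian62} and \cite[Lemma 1.2.5]{Baumgarten2013thesis}, and your route is the standard proof behind those references: a finite-dimensional comparison via Gaussian interpolation with monotone smooth approximations of the indicators, followed by passage to $E$ through a countable set containing both separants and the discontinuity set of $f$, using the graph-closure form of separability. One point worth stating explicitly is that $\{X_t\le f(t)~\forall t\in E\}$ need not itself be measurable; your identification shows it agrees with a measurable event up to a null set, which is how the probability in the statement should be read.
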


An immediate corollary to Slepian's Lemma for positively correlated processes is the following, which we will need later. This is the same as \cite[Cor.\ 5.6]{AurzadaHelmer2026}.

\begin{cor}\label{cor_SlepianPosCorr}
    Let $(E,d)$ be a metric space and let $(Y_t)_{t\in E}$ denote a real-valued, separable, centred Gaussian process such that $\expec{Y_s Y_t} \geq 0$, $\forall s,t\in E$.
    Let $f: E\to \R$ be a measurable function that is continuous except for possibly countably many points. Let $E = A \cup B$. Then
    \begin{align*}
        \prob{Y_t \leq f(t) ~ \forall t\in E} \quad \geq \quad \prob{Y_t \leq f(t) ~ \forall t\in A} \cdot \prob{Y_t \leq f(t) ~ \forall t\in B}.
    \end{align*}
\end{cor}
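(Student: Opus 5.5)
We derive the corollary from Slepian's Lemma (Lemma~\ref{lem_Slepian}) by comparing $Y$ with a field that agrees with $Y$ on each piece but has independent pieces. Since $A$ and $B$ may overlap, we first make them disjoint: set $B' := B\setminus A$. Then
\begin{align*}
\prob{Y_t \leq f(t)~\forall t\in B'} \geq \prob{Y_t\leq f(t)~\forall t\in B}
\end{align*}
trivially, and $E = A\,\dot\cup\, B'$. It therefore suffices to prove the product inequality for the disjoint decomposition $E=A\cup B'$, replacing $B$ by $B'$ in the last factor.

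Let $(\tilde Y^{(1)}_t)_{t\in E}$ and $(\tilde Y^{(2)}_t)_{t\in E}$ be two independent copies of $Y$, and define the comparison field
\begin{align*}
X_t := \tilde Y^{(1)}_t \ind_A(t) + \tilde Y^{(2)}_t \ind_{B'}(t), \qquad t\in E.
\end{align*}
This $X$ is a centred Gaussian field on $E$ and is separable, since its restrictions to $A$ and to $B'$ are separable.

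Next we check the hypotheses of Lemma~\ref{lem_Slepian} for the pair $(X,Y)$.
\begin{itemize}
\item The variances agree: $\expec{X_t^2}=\expec{Y_t^2}$ for every $t\in E$.
\item For $s,t$ in the same piece, $\expec{X_sX_t}=\expec{Y_sY_t}$.
\item For $s\in A$ and $t\in B'$ (or vice versa), independence gives $\expec{X_sX_t}=0\leq \expec{Y_sY_t}$, by the assumption of nonnegative correlations.
\end{itemize}
Hence $\expec{X_sX_t}\leq \expec{Y_sY_t}$ for all $s,t\in E$. Lemma~\ref{lem_Slepian}, applied with the same $f$, then gives
\begin{align*}
\prob{Y_t\leq f(t)~\forall t\in E} \geq \prob{X_t\leq f(t)~\forall t\in E}.
\end{align*}
By independence of the two copies, the right-hand side factorises as
\begin{align*}
\prob{Y_t\leq f(t)~\forall t\in A}\cdot\prob{Y_t\leq f(t)~\forall t\in B'},
\end{align*}
and combining this with the trivial inequality from the first step gives the claim.

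The main technical point is separability of $X$ together with the measurability of the events involved, since the indicator construction breaks continuity across the boundary between $A$ and $B'$. I would handle this by noting that Slepian's Lemma is only applied to countable dense subsets. Concretely, I would choose countable separating sets in $A$ and in $B'$, apply the finite-dimensional Slepian inequality, and pass to the limit by monotone convergence. The permitted countably many discontinuities of $f$ are covered by adding those points to the countable set.
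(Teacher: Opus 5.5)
Your proposal is correct and is essentially the argument behind the paper's statement. The paper gives no separate proof, calling it an immediate corollary of Slepian's Lemma (as in \cite[Cor.\ 5.6]{AurzadaHelmer2026}), and that corollary is obtained exactly by your comparison of $Y$ with the field built from independent copies of $Y$ on $A$ and on $B\setminus A$, together with the monotonicity step replacing $B$ by $B\setminus A$. The one step you assert rather than justify is the separability of the restrictions $Y|_A$ and $Y|_{B\setminus A}$, which you need for $X$ to be separable; this is immediate for the a.s.\ continuous fields to which the paper applies the corollary, and the statement implicitly presupposes it anyway.
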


Combining Slepian's Lemma and Toponogov's Theorem lets us conclude the upper bound for the persistence probability of the hyperbolic fractional L\'evy field using the result for the Euclidean case.

\begin{lem}\label{lem_hfbmUpperBound}
    Let $(X_H(\eta))_{\eta\in\hyp_{d}}$ be the hyperbolic fractional L\'evy field on $\hyp_{d}$ and let $R>0$. Then, for some constant $c>0$,
    \begin{align*}
        \prob{ \sup_{\eta\in\bb^{\hyp_d}_R(O)} X_H(\eta) < \eps } \leq \eps^{\frac{d}{H}} \exp( \sqrt{ - c \log \eps} )
    \end{align*}
    for some constant $c>0$ and $\eps>0$ small enough.
\end{lem}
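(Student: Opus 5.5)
The plan is to transport the hyperbolic field to Euclidean space through the exponential map and then compare it with the Euclidean fractional L\'evy field using Slepian's Lemma. Define the pulled-back field $Y(t) := X_H(\exp_O(t))$ for $t\in\R^d$, where $T_O\hyp_d$ is identified with $\R^d$ via an orthonormal basis as in \eqref{eq_exponentialMapEqualities}. Since $\inj(\hyp_d)=\infty$, the map $\exp_O$ is a diffeomorphism that sends $\bb_R(0)\subset\R^d$ onto $\bb_R^{\hyp_d}(O)$. Hence
\begin{align*}
    \prob{ \sup_{\eta\in\bb^{\hyp_d}_R(O)} X_H(\eta) < \eps } = \prob{ \sup_{t\in\bb_R(0)} Y(t) < \eps }.
\end{align*}
The field $Y$ is centred and Gaussian. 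It is a.s.\ continuous by Lemma~\ref{lem_epsBallSupEstimate}, because $\exp_O$ is continuous, and in particular it is separable.

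Next I compare covariances with the Euclidean field $(B_H(t))_{t\in\R^d}$ from \eqref{eqn:defeuclideanlf}. By \eqref{eq_exponentialMapEqualities},
\begin{align*}
    \expec{Y(t)^2} = d(O,\exp_O(t))^{2H} = \norm{t}^{2H} = \expec{B_H(t)^2}.
\end{align*}
By Lemma~\ref{lem_Toponogov},
\begin{align*}
    \expec{(Y(s)-Y(t))^2} = d(\exp_O(s),\exp_O(t))^{2H} \geq \norm{s-t}^{2H}.
\end{align*}
Together with polarisation, these two facts give
\begin{align*}
    \expec{Y(s)Y(t)} = \tfrac12\left(\norm{s}^{2H}+\norm{t}^{2H}-d(\exp_O(s),\exp_O(t))^{2H}\right) \leq \expec{B_H(s)B_H(t)}.
\end{align*}
Thus Lemma~\ref{lem_Slepian} applies with $E=\bb_R(0)$, $X:=Y$, the Euclidean field in the role of the process with larger covariance, and the constant function $f\equiv \eps'$. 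The events $\{\sup < \eps\}$ and $\{\le \eps'\,\forall t\}$ are handled by taking $\eps'\uparrow\eps$ and using continuity of the paths. This yields
\begin{align*}
    \prob{ \sup_{\bb_R(0)} Y < \eps } \leq \prob{ \sup_{t\in\bb_R(0)} B_H(t) < \eps }.
\end{align*}

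It remains to bound the Euclidean persistence probability on the fixed ball $K=\bb_R(0)$, which contains $0$ in its interior. Self-similarity turns this into the expanding-domain problem \eqref{mol_classicalExpandingDomain} with $T=\eps^{-1/H}$, and Molchan's result \eqref{mol_dualFixedDomain} gives $\eps^{d/H+\odcl(1)}$. That is already enough for Theorem~\ref{thm_mainHyperbolic}.

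The main obstacle is the explicit correction factor $\exp(\sqrt{-c\log\eps})$. I would get it by going back to the upper-bound argument in \cite{Molchan99}, which I expect to give a bound of the form $T^{-d}\exp(c'\sqrt{\log T})$. That argument uses the argmax density combined with a Borell--TIS type estimate of the supremum over an $\eps^{1/H}$-ball, as in Step 1 of Lemma~\ref{lem_hfbmLowerBound}, where the logarithmic loss appears. Substituting $T=\eps^{-1/H}$ then produces $\eps^{d/H}\exp(\sqrt{-c\log\eps})$ for a suitable $c>0$. If the cited statement only provides the $\odcl(1)$ form, I would redo that step of Molchan's proof quantitatively, tracking the $\sqrt{\log}$ error term, to obtain the stated explicit form.
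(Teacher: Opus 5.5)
Your proposal is correct and follows the paper's proof step for step: pull back along $\exp_O$, match variances via \eqref{eq_exponentialMapEqualities}, get the covariance inequality from Lemma~\ref{lem_Toponogov}, compare with the Euclidean field via Lemma~\ref{lem_Slepian}, and finish with Molchan's Euclidean estimate; your handling of strict versus non-strict inequalities is a detail the paper skips. The paper takes the explicit factor $\exp(\sqrt{-c\log\eps})$ directly from \cite[Lem.~5]{Molchan99}, so no re-derivation is needed, and the argmax-plus-Borell--TIS mechanism you sketch for it is what drives the lower bound in Lemma~\ref{lem_hfbmLowerBound}, so it would not by itself give this upper bound.
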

\begin{proof}
    Due to \eqref{eq_exponentialMapEqualities} we know that for all $\eta\in\hyp_{d}$
    \begin{align*}
        \expec{X_H(\eta)^2} = d(\eta,O)^{2H} = \norm{\exp_O^{-1}(\eta)}^{2H} = \expec{B_H(\exp_O^{-1}(\eta))^2},
    \end{align*}
    where $(B_H(t))_{t\in\R^d}$ is standard fractional Brownian motion with Hurst parameter $H$ in $\R^d$. From Toponogov's theorem in the form of Lemma \ref{lem_Toponogov} we furthermore know that for all $\eta,\zeta\in\hyp_{d}$
    \begin{align*}
        \expec{X_H(\eta) X_H(\zeta)} 
        & = \eh \left( d(O,\eta)^{2H} + d(O,\zeta)^{2H} - d(\eta,\zeta)^{2H} \right)& \\
        & \leq \eh \left( \norm{\exp_O^{-1}(\eta)}^{2H} + \norm{\exp_O^{-1}(\zeta)}^{2H} - \norm{\exp_O^{-1}(\eta)-\exp_O^{-1}(\zeta)}^{2H} \right) & \\
        & = \expec{B_H(\exp_O^{-1}(\eta)) B_H(\exp_O^{-1}(\zeta))}.
    \end{align*}
    We may now apply Slepian's Lemma in the form of Lemma \ref{lem_Slepian} and use the fact that $\exp_O^{-1} : \bb_R^{\hyp_d}(O) \to \bb_R^{\R^d}(0)$ is a bijection to obtain
    \begin{align*}
        \prob{ \sup_{\eta\in\bb^{\hyp_d}_1(O)} X_H(\eta) < \eps }
        & \leq \prob{ \sup_{\eta\in\bb^{\hyp_d}_R(O)} B_H(\exp_O^{-1}(\eta)) < \eps } \\
        & = \prob{ \sup_{t\in\bb^{\R^d}_R(0)} B_H(t) < \eps } \\
        & \leq \eps^{\frac{d}{H}} \exp( \sqrt{ - c ~ \log \eps} ),
    \end{align*}
    with an application of the Euclidean result \cite[Lem.\ 5]{Molchan99}. 
\end{proof}

Now Lemma~\ref{lem_hfbmLowerBound} and Lemma~\ref{lem_hfbmUpperBound} prove Theorem~\ref{thm_mainHyperbolic}.

\section{Persistence exponent of fractional L\'evy fields on Riemannian manifolds}\label{sec_persistenceGeneralRiemann}

In this section we derive the proof of Theorem~\ref{thm_mainRiemannianManifold} by comparing the persistence probability of fractional Lévy fields indexed by general Riemannian manifolds to those indexed by model spaces. The bounds derived in Lemma~\ref{lem_riemannPersistenceUpperBound} and Lemma~\ref{lem_riemannPersistenceLowerBound} below imply Theorem~\ref{thm_mainRiemannianManifold}.

As in Section~\ref{sec_RiemannianBackground} we assume that the Riemannian manifold $\mcl$ considered will be $\mathcal{C}^\infty$, connected, Hausdorff, with a countable base and without boundary. Again we identify some specific point $O\in\mcl$. We are only going to consider fractional L\'evy fields with Hurst parameters $0< H \leq 1/2$, provided they exist.

In our previous application of Toponogov's theorem (Lemma~\ref{lem_Toponogov}) we were able to use the fact that we wanted to compare a model space with a specific other model space. If we want to compare distances on an arbitrary manifold with distances on an arbitrary model space, we need a statement with weaker assumptions. Such a weaker statement exists (cf.\ Lemma~\ref{lem_rauchToponogovComparisonEstimate} below), but it will only work locally. To this end, we construct the following function $\Psi_\kappa$ for $\kappa\in\R$.

We require some specific diffeomorphisms between $\mcl$ and certain model spaces $\mcl_\kappa$. To this end let $\kappa\in\R$ be arbitrary and define the canonical isomorphism $\mathfrak{I}_O: T_O\mcl \to T_O \mcl_\kappa$ that identifies the tangent space of $\mcl$ at $O$ with the tangent space of the model space $\mcl_\kappa$ of the corresponding dimension. Note that there are two different points $O$, but since in both cases they just represent the origin of the respective coordinate system we use the same notation. The canonical isomorphism exists for any $\mcl, O$ and any $\kappa$, but they affect how large the following radius may be chosen: For $0< r\leq\min\{\inj_\mcl(O), \inj_{\mcl_\kappa}(O)\}$ (recall that the injectivity radius of any point on any Riemannian manifold we consider is positive, cf.\ Section~\ref{sec_RiemannianBackground}), define $\Psi_\kappa:  \bb_r^{\mcl}(O) \to \bb_r^{\mcl_\kappa}(O)$ by
\begin{align}\label{eq_exponentialTransferMap}
    \Psi_\kappa(\eta) := \left(\exp_{\mcl_\kappa, O}\circ \mathfrak{I}_O \circ \exp^{-1}_{\mcl, O}\right)(\eta),
\end{align}
where the subindex on the exponential map denotes which space it is defined on. By construction, this is a bijection, and, since the exponential maps (inside their injectivity radius) and the isometry are diffeomorphisms, the concatenation is a diffeomorphism. By length preservation of the exponential map with respect to  the coordinate origin (cf.\ \eqref{eq_expMapDistancePreserving}) and by isometry, we have that for $\eta\in\mcl$
\begin{align}\label{eq_expMapDistancePreservingGeneralised}
    d_\mcl(O,\eta) = d_{\mcl_\kappa}(\Psi_\kappa(O),\Psi_\kappa(\eta)),
\end{align}
where $d_\mcl(.,.)$ and $d_{\mcl_\kappa}(.,.)$ are the shortest (geodesic) distance functions in the respective spaces. Since $\Psi_\kappa(O)$ is just the fixed origin in a different manifold (namely $\mcl_\kappa$), which is clear from context, we will write $O$ for $\Psi_\kappa(O)$, as well.

The following is a streamlined version of the geometric Rauch theorem (cf.\ \cite[Thm.\ IX.2.3]{Chavel2006} and \cite[Sec.\ 4]{Karcher1989}), which is a localised version of Toponogov's theorem. It would not hold on such a large class of Riemannian manifolds if one did not restrict to a sufficiently small domain. 

\begin{lem}\label{lem_rauchToponogovComparisonEstimate}
    Under the above assumptions on the manifold $\mcl$ there exists $r'>0$ and $\kappa', \kcl' \in\R$ with $\kappa'\leq\kcl'$, such that for any $\kappa\leq\kappa'$, any $\kcl'\leq\kcl$, any $0<r\leq\min\left\{r', \inj(\mcl_\kcl)\right\}$, and any $\eta,\zeta\in\bb_r^{\mcl}(O)$ we have
    \begin{align*}
        d_\kcl(\Psi_\kcl(\eta), \Psi_\kcl(\zeta)) \leq d(\eta, \zeta) \leq d_\kappa(\Psi_\kappa(\eta), \Psi_\kappa(\zeta)),
    \end{align*}
    where $d_\kcl(.,.)$ and $d_\kappa(.,.)$ are the shortest distance functions in the respective model spaces $\mcl_\kcl$ and $\mcl_\kappa$ and $\Psi_\kcl, \Psi_\kappa$ are the diffeomorphisms defined in \eqref{eq_exponentialTransferMap}.
\end{lem}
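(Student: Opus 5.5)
The plan is to reduce the statement to the Rauch comparison theorem in its two-sided form, applied on a small normal ball, and then to an explicit comparison of distances between the model spaces. First I choose the constants. Fix $r_0>0$ with $r_0<\inj_\mcl(O)$ and small enough that $\olb_{r_0}(O)$ is relatively compact and geodesically convex (possible for small radii by Whitehead's theorem, cf.\ \cite{Chavel2006}). By compactness the sectional curvatures on $\olb_{r_0}(O)$ lie in some interval $[\kappa',\kcl']$, and $\kappa'\le\kcl'$ is automatic. Then I take $r'\le r_0$ so small that $r'$ is also below the injectivity radius, and below $\pi/(2\sqrt{\kcl'})$ if $\kcl'>0$, so that the Rauch hypotheses (no conjugate points along radial geodesics) hold on $\bb_{r'}(O)$ in both comparison spaces. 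If $\kappa\le\kappa'$ and $\kcl\ge\kcl'$, the curvature of $\mcl$ on the ball is still pinched between $\kappa$ and $\kcl$. So it suffices to prove the inequalities for an arbitrary pair $(\kappa,\kcl)$ bounding the curvature, with $r\le\min\{r',\inj(\mcl_\kcl)\}$.

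Next, the Rauch step. For $\eta,\zeta\in\bb_r(O)$, let $\gamma$ be the minimising geodesic from $\eta$ to $\zeta$. It stays in $\bb_r(O)$ by convexity. Map it by $\Psi_\kappa$ to a curve in $\mcl_\kappa$. Rauch's theorem (\cite[Thm.\ IX.2.3]{Chavel2006}), applied to the differentials of $\exp_{\mcl,O}$ and $\exp_{\mcl_\kappa,O}$ along radial geodesics, gives $\norm{d\Psi_\kappa(v)}\ge\norm{v}$ when the curvature of $\mcl$ is at least $\kappa$. Hence
\begin{align*}
d(\eta,\zeta)=L(\gamma)\le L(\Psi_\kappa\circ\gamma)\quad\text{fails in direction; instead } L(\Psi_\kappa\circ\gamma)\le L(\gamma).
\end{align*}
Here I have to be careful about which direction Rauch gives. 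Higher curvature shrinks Jacobi fields, so $\exp$ on $\mcl$, whose curvature is at least $\kappa$, stretches less than on $\mcl_\kappa$. That yields $\norm{d\Psi_\kappa^{-1}(w)}\le\norm{w}$. Now take the minimising geodesic $\beta$ in $\mcl_\kappa$ between $\Psi_\kappa(\eta)$ and $\Psi_\kappa(\zeta)$. It lies in $\bb_r^{\mcl_\kappa}(O)$ because balls in model spaces of radius below $\pi/(2\sqrt{\kappa})$ are convex. Pulling it back gives $d(\eta,\zeta)\le L(\Psi_\kappa^{-1}\circ\beta)\le L(\beta)=d_\kappa(\Psi_\kappa(\eta),\Psi_\kappa(\zeta))$. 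This is the right-hand inequality. Symmetrically, curvature at most $\kcl$ gives $\norm{d\Psi_\kcl(v)}\le\norm{v}$. Pushing $\gamma$ forward then yields $d_\kcl(\Psi_\kcl(\eta),\Psi_\kcl(\zeta))\le L(\Psi_\kcl\circ\gamma)\le d(\eta,\zeta)$, which is the left-hand inequality.

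The main obstacle is making the infinitesimal Rauch statement precise: the radial part of $d\exp$ is isometric by the Gauss lemma, and the tangential parts are Jacobi fields whose norms are compared. A second difficulty is keeping every relevant geodesic inside the region where Rauch applies and no conjugate points occur. That is exactly why $r'$ must be small, and why $r\le\inj(\mcl_\kcl)$ is required, since it keeps $\Psi_\kcl$ defined. Alternatively, the whole statement can be quoted from the hinge version in \cite[Sec.\ 4]{Karcher1989}, with the proposal above only verifying that its hypotheses hold.
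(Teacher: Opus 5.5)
Your argument is correct and takes essentially the paper's route. The paper gives no separate proof; it presents the lemma as a direct consequence of Rauch's comparison theorem (\cite[Thm.\ IX.2.3]{Chavel2006}, \cite[Sec.\ 4]{Karcher1989}), and you spell out the standard derivation: Gauss lemma plus Jacobi-field comparison for $d\Psi_\kappa$ and $d\Psi_\kcl$, integrated along minimisers kept inside convex balls.

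Two small fixes are needed. Delete the stray display asserting $L(\Psi_\kappa\circ\gamma)\le L(\gamma)$; it is backwards, since curvature $\ge\kappa$ gives $\norm{d\Psi_\kappa(v)}\ge\norm{v}$, which is what you then correctly use via $\Psi_\kappa^{-1}$. Also take $r_0$ below the convexity radius at $O$, so that every ball $\bb_r(O)$ with $r\le r_0$ is convex, not just $\olb_{r_0}(O)$.
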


To obtain the persistence results, we require an analogue of Lemma \ref{lem_NonPersistenceOnDomainsWithoutZero} for arbitrary Riemannian manifolds:

\begin{lem}\label{lem_NonPersistenceOnDomainsWithoutZeroRiemann} 
    Let $(X_H(\eta))_{\eta\in\mcl}$ be a fractional L\'evy field with Hurst parameter $0<H\leq 1/2$ and let $A$ be a relatively compact subset of $\mcl$, such that $\inf_{\eta\in A} \expec{X_H(\eta)^2} > 0$. Then with positive probability the field is non-positive on $A$, i.e.
    \begin{align*}
        \prob{X_H(\eta) \leq 0 ~ \forall \eta\in A} > 0.
    \end{align*}
\end{lem}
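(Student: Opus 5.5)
The plan is to combine three ingredients: a nonnegative correlation property, which holds because $H\leq 1/2$; a localisation of $\ol{A}$ into finitely many small balls; and Corollary~\ref{cor_SlepianPosCorr} to glue the local estimates together.

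\emph{Step 1 (positive correlations).} I first show that $\expec{X_H(\eta)X_H(\zeta)}\geq 0$ for all $\eta,\zeta\in\mcl$. The covariance is $\eh\bigl(d(O,\eta)^{2H}+d(O,\zeta)^{2H}-d(\eta,\zeta)^{2H}\bigr)$. By the triangle inequality and the subadditivity of $x\mapsto x^{2H}$ for $2H\leq 1$, we get
\[
d(\eta,\zeta)^{2H}\leq \bigl(d(O,\eta)+d(O,\zeta)\bigr)^{2H}\leq d(O,\eta)^{2H}+d(O,\zeta)^{2H},
\]
so the covariance is nonnegative. Hence Corollary~\ref{cor_SlepianPosCorr} applies with $f\equiv 0$. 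Iterating it gives, for any finite cover $A\subseteq\bigcup_{i=1}^n B_i$,
\[
\prob{X_H\leq 0 \text{ on } A}\geq\prod_{i=1}^n\prob{X_H\leq 0\text{ on } B_i}.
\]
It therefore suffices to show that each factor is positive for a suitable finite cover.

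\emph{Step 2 (local estimate).} Since $A$ is relatively compact, $D:=\sup_{\eta\in\ol{A}}d(O,\eta)<\infty$. Hence $\prob{X_H(\zeta)\leq -1}\geq \Phi(-D^{-H})=:p>0$ uniformly in $\zeta\in\ol A$, where $\Phi$ is the standard normal distribution function. For $\zeta\in\ol{A}$ and a small radius $\rho$ I write $X_H(\eta)=X_H(\zeta)+(X_H(\eta)-X_H(\zeta))$. This gives
\[
\prob{X_H\leq 0\text{ on }\bb_\rho(\zeta)}\geq p-\prob{\sup_{\eta\in\olb_\rho(\zeta)}\abs{X_H(\eta)-X_H(\zeta)}\geq 1}.
\]
The increment field is centred Gaussian with canonical metric $d(\cdot,\cdot)^H$. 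I apply Dudley's theorem (Lemma~\ref{lem_MetricSupEstimate}) with the covering bound $N_X(x)\leq c\,x^{-d/H}$ from \cite[Prop.\ 3.1]{KraetschmerUrusov2022}, valid uniformly on a fixed relatively compact neighbourhood of $\ol A$. This yields $\expec{\sup_{\olb_\rho(\zeta)}\abs{X_H(\eta)-X_H(\zeta)}}\leq C\rho^H\sqrt{-\log\rho}$ uniformly in $\zeta$; for the absolute value I use symmetry, together with the fact that the field vanishes at the centre. Markov's inequality then lets me choose $\rho$ so small that the subtracted probability is below $p/2$ for all $\zeta$. Each such ball thus has probability at least $p/2$ of being nonpositive.

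\emph{Step 3.} By compactness, $\ol{A}$ is covered by finitely many balls $\bb_\rho(\zeta_i)$ with $\zeta_i\in\ol A$. Step 1 then gives $\prob{X_H\leq 0\text{ on }A}\geq (p/2)^n>0$.

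The main obstacle is the uniform small-ball control of the increments in Step 2. Lemma~\ref{lem_epsBallSupEstimate} was stated only for homogeneous manifolds, so here I redo the Dudley computation directly. The required uniformity in $\zeta$ comes from the covering number estimate on a compact set, and a.s.\ continuity (separability) follows from the same entropy bound.
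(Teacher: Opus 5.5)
Your overall route (nonnegative correlations from $2H\le 1$, a finite cover of $\ol A$ by small balls, a local estimate on each ball, and gluing with Corollary~\ref{cor_SlepianPosCorr}) is what the paper intends. The paper only points to \cite[Lemma A.1]{AurzadaHelmer2026} and stresses compactness of $\ol A$ and positive correlations.

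There is, however, a genuine error in Step~2, and the symptom is that you never use the hypothesis $\inf_{\eta\in A}\expec{X_H(\eta)^2}>0$. Since $X_H(\zeta)\sim\ncl(0,d(O,\zeta)^{2H})$, you have $\prob{X_H(\zeta)\le -1}=\Phi(-d(O,\zeta)^{-H})$, which is \emph{increasing} in $d(O,\zeta)$. The bound $d(O,\zeta)\le D$ therefore gives $\prob{X_H(\zeta)\le -1}\le \Phi(-D^{-H})$, an upper bound, not a lower one. Indeed this probability tends to $0$ as $\zeta\to O$, so no uniform $p>0$ comes out of $D$.

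A hypothesis-free argument cannot be right, because the conclusion fails when $O$ is an interior point of $A$. Along a short geodesic through $O$ the field is a one-dimensional fBm with Hurst parameter $H$. By the law of the iterated logarithm it is then almost surely positive somewhere arbitrarily close to $O$, which is the argument used in Corollary~\ref{cor_nonDegeneracyArgmax}.

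The repair is to use the assumption where you used $D$. Since $d(O,\cdot)$ is continuous,
$\delta_0:=\inf_{\eta\in\ol A}d(O,\eta)=\bigl(\inf_{\eta\in A}\expec{X_H(\eta)^2}\bigr)^{1/(2H)}>0$. Hence $\prob{X_H(\zeta)\le -1}\ge \Phi(-\delta_0^{-H})=:p>0$ uniformly in $\zeta\in\ol A$.

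With this $p$, the rest of your proof goes through unchanged. That is: the Dudley bound for the increment field on $\olb_\rho(\zeta)$ with the uniform covering estimate on a compact neighbourhood of $\ol A$, the symmetry trick for the absolute value, Markov's inequality giving a uniform $\rho$, the finite cover, and the product bound $(p/2)^n$. The Markov step then automatically forces $O\notin\olb_\rho(\zeta_i)$, which is exactly where the hypothesis enters.
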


\begin{rem}\label{rem_proofOfDomainsWithoutZeroAndPositiveCorrelations}
    The proof of this lemma can, again, be carried out analogously to the proof of \cite[Lemma A.1]{AurzadaHelmer2026}. Note that this works due to the closure of $A$ being compact and because the process is positively correlated everywhere, i.e.\ $\expec{X_H(\eta) X_H(\zeta)}\geq 0$ for all $\eta,\zeta\in\mcl$. This can be seen by an application of triangle inequality for $d$ and the fact that $(a+b)^{2H} \leq a^{2H} + b^{2H}$ for $a,b\geq 0$ and $2H \leq 1$.
\end{rem}

We now define an abstraction of the concept of \textit{self-similarity} or \textit{scale invariance} by scaling the Riemannian metric. The definition is modelled after \cite[Sec.\ 3.4.2]{Gelbaum2014} and slightly generalised to be applicable for weak stationary fields (recall Definition~\ref{def_stationaryIncr} and Remark~\ref{rem_OnWeakStationarity} (a)).

\begin{defi}\label{def_selfSimilarRiemann}
    Let $(X(\eta))_{\eta\in\mcl}$ be a weak stationary centred Gaussian field indexed by the Riemannian manifold $(M,\abrac{.,.}_\mcl)$ with Riemannian metric $\abrac{.,.}_\mcl$ so that there is a function $g:\R_{\geq 0}\to\R_{\geq 0}$ which determines the structure function of the process, i.e.\ $\expec{(X(\eta)-X(\zeta))^2}=g(d_\mcl(\eta,\zeta))$ for all $\eta,\zeta\in\mcl$.
    
    The process is called self-similar (of order $H$), if for any $a>0$ and $\ol{\mcl} := (M, a^2 \abrac{.,.}_\mcl)$ the centred Gaussian field $(a^{-H} \ol{X}(\eta))_{\eta\in\ol{\mcl}}$ determined by $\expec{(\ol{X}(\eta) - \ol{X}(\zeta))^2} = g(d_{\ol{\mcl}}(\eta,\zeta)) \\ = g(a ~d_\mcl(\eta,\zeta))$ exists and has the same finite dimensional distributions as $(X(\eta))_{\eta\in\mcl}$. 
\end{defi}

We can immediately apply this concept to our fractional L\'evy fields in the next lemma, the proof of which is immediate.

\begin{lem}\label{lem_selfSimRiemann}
    Let $(X_H(\eta))_{\eta\in\mcl}$ be a fractional L\'evy field. Then it is self-similar of order $H$.
\end{lem}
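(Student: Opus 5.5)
The plan is to verify the definition directly. Self-similarity here reduces to two facts: an existence statement for the field on the rescaled manifold $\ol{\mcl}$, and an equality of covariance functions, which settles everything because all fields involved are centred Gaussian.

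First, I identify the ingredients of Definition~\ref{def_selfSimilarRiemann} for the fractional L\'evy field. By Remark~\ref{rem_OnWeakStationarity}~(a), $(X_H(\eta))_{\eta\in\mcl}$ is weak stationary. It is centred Gaussian, and its structure function is $g(x)=x^{2H}$.

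Next, I record how the geodesic distance transforms under rescaling. For $a>0$, scaling the Riemannian metric by $a^2$ scales every tangent norm by $a$. By \eqref{eq_curveLengthDef}, the length of every piecewise smooth curve is then multiplied by $a$. Taking the infimum over $\mathcal{C}(\eta,\zeta)$ gives $d_{\ol{\mcl}}(\eta,\zeta)=a\,d_\mcl(\eta,\zeta)$.

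Then I construct the required field on $\ol{\mcl}$ explicitly. Set $\ol{X}(\eta):=a^{H}X_H(\eta)$, with the same reference point $O$, so that $\ol{X}(O)=0$. This is centred Gaussian and a.s.\ continuous. Its structure function is
\begin{align*}
\expec{(\ol{X}(\eta)-\ol{X}(\zeta))^2}=a^{2H}d_\mcl(\eta,\zeta)^{2H}=\big(a\,d_\mcl(\eta,\zeta)\big)^{2H}=g(d_{\ol{\mcl}}(\eta,\zeta)).
\end{align*}
This settles the existence part of the definition.

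For uniqueness, I use that a centred Gaussian field vanishing at $O$ is determined in law by its structure function. Polarisation gives
\begin{align*}
\expec{Y_\eta Y_\zeta}=\eh\big(g(d(O,\eta))+g(d(O,\zeta))-g(d(\eta,\zeta))\big),
\end{align*}
and a centred Gaussian field's finite dimensional distributions are fixed by its covariances. So any field on $\ol{\mcl}$ with the prescribed structure function and $\ol{X}(O)=0$ has the law of $a^HX_H$. Consequently $a^{-H}\ol{X}$ has the same covariance as $X_H$, hence the same finite dimensional distributions.

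The only step needing care is this normalisation at $O$. The definition determines $\ol{X}$ only through its increments, so I would read it, as with all fractional L\'evy fields in the paper, as including the convention $\ol{X}(O)=0$. Without that convention, the law is determined only up to an additive Gaussian shift, and the claimed equality of laws would fail. With it, the claim follows.
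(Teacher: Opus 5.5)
Your proof is correct and is exactly the direct verification the paper has in mind when it calls the proof immediate: the curve-length formula gives $d_{\ol{\mcl}} = a\, d_\mcl$ (the same observation the paper uses in the proof of Proposition~\ref{prop_selfSimRiemann}), so $\ol{X} := a^H X_H$ has structure function $g(d_{\ol{\mcl}})$, and polarisation then fixes the centred Gaussian law. Your remark that Definition~\ref{def_selfSimilarRiemann} implicitly needs the normalisation $\ol{X}(O)=0$ for ``determined by'' to single out one law is a fair and correct reading of the definition.
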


\begin{rem}
    There exist other Gaussian fields that are self-similar in the sense of Definition~\ref{def_selfSimilarRiemann}, for example seen in \cite[Prop.\ 3.18]{Gelbaum2014}. Note that in \cite[Sec.\ 3.3 and Sec.\ 5]{Istas2012} different notions of self-similarity are used for generalisation.
\end{rem}

We now apply the general self-similarity to the models spaces $\mcl_\kappa$ for any $\kappa\in\R$ by first noting that scaling the Riemannian metric is the same as scaling the radius of the respective model space. Again the proof is a straightforward case distinction ($\kappa<0$, $\kappa=0$, $\kappa>0$) and and explicit computation with the isometry $\vphi:\ol{\mcl_\kappa} \to \mcl_{\kappa/a^2}$ be defined by $\vphi(\eta) := a \eta$.

\begin{lem}\label{lem_ambientScalingIsometriesConstCurvature}
    Let $a>0$ and let $\ol{\mcl_\kappa} = (M_\kappa, a^2 \abrac{.,.}_{\mcl_\kappa})$ be the scaled model space $\mcl_\kappa$. Then $\ol{\mcl_\kappa}$ is isometric to $\mcl_{\kappa/a^2} = (M_{\kappa/a^2}, \abrac{.,.}_{\mcl_{\kappa/a^2}})$.
\end{lem}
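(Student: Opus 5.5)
The plan is to verify directly that the dilation $\vphi(\eta) := a\eta$, viewed as a map $\ol{\mcl_\kappa}\to\mcl_{\kappa/a^2}$, is a bijective isometry. We treat the three cases $\kappa>0$, $\kappa=0$, $\kappa<0$ separately. Since the paper describes the model spaces through explicit distance formulas, we argue at the level of distances. If $\gamma$ is a piecewise smooth curve, then its length with respect to $a^2\abrac{.,.}_{\mcl_\kappa}$ is $a\,L(\gamma)$ by \eqref{eq_curveLengthDef}. Taking the infimum over curves gives
\begin{align*}
d_{\ol{\mcl_\kappa}}(\eta,\zeta) = a\, d_{\mcl_\kappa}(\eta,\zeta).
\end{align*}
It therefore suffices to show that $\vphi$ is a bijection $M_\kappa\to M_{\kappa/a^2}$ satisfying $d_{\mcl_{\kappa/a^2}}(\vphi(\eta),\vphi(\zeta)) = a\, d_{\mcl_\kappa}(\eta,\zeta)$. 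The underlying spaces are metric spaces whose metrics are induced by the Riemannian structures. Moreover, $\vphi$ is the restriction of a linear map of the ambient space, hence smooth, so by Myers--Steenrod a distance-preserving smooth bijection is a Riemannian isometry.

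For $\kappa=0$ the map is the scaling $t\mapsto at$ on $\R^d$, and $\norm{at-as}=a\norm{t-s}$ gives the claim immediately.

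For $\kappa>0$, $\mcl_\kappa=\sph_d(1/\sqrt\kappa)$ and $\mcl_{\kappa/a^2}=\sph_d(a/\sqrt\kappa)$. If $\norm{\eta}^2=1/\kappa$, then $\norm{a\eta}^2=a^2/\kappa$, so $\vphi$ maps the first sphere bijectively onto the second, with inverse $\eta\mapsto\eta/a$. Writing $\rho=1/\sqrt\kappa$ and using \eqref{eq_defSphereWithRadiusAndDistance},
\begin{align*}
d(a\eta,a\zeta) = a\rho\arccos\left(\tfrac{1}{a^2\rho^2}\abrac{a\eta,a\zeta}\right) = a\rho\arccos\left(\tfrac{1}{\rho^2}\abrac{\eta,\zeta}\right) = a\, d(\eta,\zeta).
\end{align*}

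The case $\kappa<0$ is identical, using the Lorentzian product and \eqref{eq_defHyperboloidWithRadiusAndDistance} with $\rho=1/\sqrt{-\kappa}$. Scaling by $a>0$ preserves the condition $\eta_1>0$ and maps $\eta\circ\eta=-\rho^2$ to $-a^2\rho^2$. The factor $a^2$ cancels inside $\operatorname{arccosh}$, leaving the prefactor $a$.

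The only point needing care is the passage from "distance-scaling by $a$" to "isometry of Riemannian metrics". An alternative is to check it infinitesimally. The differential of $\vphi$ is multiplication by $a$ on the ambient tangent vectors, and both model spaces carry the metric induced by the ambient Euclidean (respectively Lorentzian) form. Hence
\begin{align*}
\abrac{d\vphi\, v, d\vphi\, w} = a^2\abrac{v,w},
\end{align*}
which is exactly the metric of $\ol{\mcl_\kappa}$. I expect this identification of the induced metrics to be the only mild obstacle; the rest is bookkeeping.
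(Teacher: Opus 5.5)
Your proposal is correct and follows the paper's approach: the paper also proves this by a case distinction ($\kappa<0$, $\kappa=0$, $\kappa>0$) and a direct computation showing that $\vphi(\eta)=a\eta$ is the isometry $\ol{\mcl_\kappa}\to\mcl_{\kappa/a^2}$. Your infinitesimal check, that $d\vphi$ is multiplication by $a$ and the model-space metrics are induced from the ambient Euclidean or Lorentzian form, settles the metric-level identification that the paper leaves implicit.
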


We now apply this isometry to the manifold on which the Riemannian metric has been scaled through the self-similarity property of the L\'evy field. 

Using this, we can easily check that a fractional L\'evy field exists on any of the model spaces. The proof is again a straightforward computation.

\begin{lem}\label{lem_fddEqualityConstCurvature}
    Let $(X_H(\eta))_{\eta\in\mcl_\kappa}$ be the fractional L\'evy field on the model space $\mcl_\kappa$ with $\kappa\in\R$. Let $a>0$, let $\ol{\mcl_\kappa}:= (M_\kappa, a^2 \abrac{.,.}_{\mcl_\kappa})$ and let $\vphi: \ol{\mcl_\kappa} \to \mcl_{\kappa/a^2}$ denote the isometry given by $\vphi(\eta):=a \eta$ through Lemma~\ref{lem_ambientScalingIsometriesConstCurvature}. Then a fractional L\'evy field $(\wh{X}_H(\eta))_{\eta\in\mcl_{\kappa/a^2}}$ exists so that $(X_H(\eta))_{\eta\in\mcl_\kappa}$ and $(a^{-H} \wh{X}_H(\vphi(\eta)))_{\eta\in\mcl_{\kappa}}$ possess the same finite dimensional distributions.
\end{lem}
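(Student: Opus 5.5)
The plan is to construct $\wh{X}_H$ explicitly by transporting a scaled copy of $X_H$ along $\vphi$, and then to check that its structure function and origin are correct. Existence is not actually an issue, because the candidate field is written down directly from $X_H$. Concretely, for $\zeta\in\mcl_{\kappa/a^2}$ set
\begin{align*}
    \wh{X}_H(\zeta) := a^{H} X_H(\vphi^{-1}(\zeta)),
\end{align*}
with the origin of $\mcl_{\kappa/a^2}$ taken to be $\vphi(O)$. This is consistent with the paper's convention that $O$ denotes the coordinate origin of each model space; if needed, one composes with an isometry of $\mcl_{\kappa/a^2}$ moving $\vphi(O)$ to the chosen origin. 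The field $\wh{X}_H$ is centred, Gaussian and a.s.\ continuous, since $\vphi$ is a homeomorphism. It also satisfies $\wh{X}_H(\vphi(O)) = a^H X_H(O) = 0$ a.s.

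The key step is the computation of the structure function. For $\zeta_1=\vphi(\eta_1)$ and $\zeta_2=\vphi(\eta_2)$ we have
\begin{align*}
    \expec{(\wh{X}_H(\zeta_1)-\wh{X}_H(\zeta_2))^2} = a^{2H} d_{\mcl_\kappa}(\eta_1,\eta_2)^{2H} = d_{\ol{\mcl_\kappa}}(\eta_1,\eta_2)^{2H} = d_{\mcl_{\kappa/a^2}}(\zeta_1,\zeta_2)^{2H}.
\end{align*}
The second equality holds because scaling the Riemannian metric by $a^2$ scales every curve length in \eqref{eq_curveLengthDef} by $a$, and hence scales the geodesic distance by $a$. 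The third equality is the isometry property of $\vphi$ from Lemma~\ref{lem_ambientScalingIsometriesConstCurvature}. Thus $\wh{X}_H$ is a fractional L\'evy field on $\mcl_{\kappa/a^2}$. By construction, $a^{-H}\wh{X}_H(\vphi(\eta)) = X_H(\eta)$ holds pathwise, which is even stronger than equality of finite dimensional distributions. It is also worth noting that a centred Gaussian field vanishing at the origin is determined in law by its structure function, via the polarisation identity used in the proof of Corollary~\ref{cor_densityArgmaxHyperbolic}. So any other fractional L\'evy field on $\mcl_{\kappa/a^2}$ with the same origin would give the same conclusion.

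The only point that needs care is the distance-scaling identity $d_{\ol{\mcl_\kappa}} = a\, d_{\mcl_\kappa}$, together with the fact that $\vphi(\eta)=a\eta$ really is an isometry $\ol{\mcl_\kappa}\to\mcl_{\kappa/a^2}$. The latter is granted by Lemma~\ref{lem_ambientScalingIsometriesConstCurvature}. The former is immediate from the definition of length, because $\norm{\gamma'}$ computed with respect to $a^2\abrac{.,.}$ equals $a$ times the original norm, and taking the infimum over curves preserves the factor $a$. As a sanity check, the explicit distance formulas agree with this. For example, on spheres $d_{\sph_d(r)}$ evaluated at $(a\eta,a\zeta)$ equals $a\,d_{\sph_d(r/a)}$-type scaling, and in the case $\kappa=0$ the map $\vphi$ is just the dilation of $\R^d$.
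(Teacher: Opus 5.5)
Your proposal is correct and is the straightforward computation the paper has in mind: you set $\wh{X}_H := a^H X_H\circ\vphi^{-1}$ and combine $d_{\ol{\mcl_\kappa}} = a\, d_{\mcl_\kappa}$ (the same curve-length argument used in the proof of Proposition~\ref{prop_selfSimRiemann}) with the isometry $\vphi$, which gives structure function $d_{\mcl_{\kappa/a^2}}^{2H}$, after which the claimed identity holds even pathwise. One small point: your hedge about composing with a further isometry of $\mcl_{\kappa/a^2}$ is unnecessary and would replace the fixed map $\vphi$ by a different one, so simply take $\vphi(O)$ as the origin of $\mcl_{\kappa/a^2}$, as you do. For the natural coordinate origins this point is the corresponding origin $aO$, so that $\wh{X}_H(\vphi(O))=0$ holds directly.
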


Thus we obtain the following general statement for applying self-similarity of fractional L\'evy fields on model spaces. Note how this procedure changes the underlying space except for the Euclidean case $\kappa=0$.

\begin{prop}\label{prop_selfSimRiemann}
    Let $\kappa\in\R$, let $(X_H(\eta))_{\eta\in\mcl_\kappa}$ be the fractional L\'evy field on the model space $\mcl_\kappa$ and let $0<r< \inj_{\mcl_\kappa}(O)$. For any $a>0$ we have
    \begin{align*}
        \prob{\sup_{\eta\in\bb^{\mcl_\kappa}_r(O)} X_H(\eta) < \eps} = \prob{\sup_{\eta\in\bb^{\mcl_{\kappa/a^2}}_{a r}(O)} \wh{X}_H(\eta) < \eps ~ a^{H}},
    \end{align*}
    where $(\wh{X}_H(\eta))_{\eta\in\mcl_{\kappa/a^2}}$ is the fractional L\'evy field indexed by $\mcl_{\kappa/a^2}$.
\end{prop}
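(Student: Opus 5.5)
The plan is to combine Lemma~\ref{lem_fddEqualityConstCurvature} with the fact that the isometry $\vphi(\eta)=a\eta$ maps geodesic balls onto geodesic balls of $a$-times the radius. First, by Lemma~\ref{lem_fddEqualityConstCurvature}, the field $(X_H(\eta))_{\eta\in\mcl_\kappa}$ has the same finite dimensional distributions as $(a^{-H}\wh{X}_H(\vphi(\eta)))_{\eta\in\mcl_\kappa}$. Both fields are a.s.\ continuous: the model spaces are homogeneous, so Lemma~\ref{lem_epsBallSupEstimate} applies. Hence we may restrict to a countable dense subset of $\bb^{\mcl_\kappa}_r(O)$, where the supremum is determined by finite dimensional distributions. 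This gives
\begin{align*}
    \prob{\sup_{\eta\in\bb^{\mcl_\kappa}_r(O)} X_H(\eta) < \eps} = \prob{\sup_{\eta\in\bb^{\mcl_\kappa}_r(O)} \wh{X}_H(\vphi(\eta)) < \eps\, a^{H}}.
\end{align*}

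The second step identifies the image set $\vphi(\bb^{\mcl_\kappa}_r(O))$. Distances in $\ol{\mcl_\kappa}=(M_\kappa,a^2\abrac{.,.}_{\mcl_\kappa})$ are $a$ times those in $\mcl_\kappa$, because curve lengths in \eqref{eq_curveLengthDef} scale by $a$. So the ball $\bb^{\mcl_\kappa}_r(O)$ equals, as a set, the ball $\bb^{\ol{\mcl_\kappa}}_{ar}(O)$. Since $\vphi:\ol{\mcl_\kappa}\to\mcl_{\kappa/a^2}$ is an isometry (Lemma~\ref{lem_ambientScalingIsometriesConstCurvature}), it maps this set bijectively onto $\bb^{\mcl_{\kappa/a^2}}_{ar}(\vphi(O))$. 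Here $\vphi(O)$ is the corresponding origin of $\mcl_{\kappa/a^2}$, which in each case is the scaled reference point and is also denoted $O$. Rewriting the supremum over $\vphi(\eta)$ as a supremum over $\xi\in\bb^{\mcl_{\kappa/a^2}}_{ar}(O)$ then yields the claim.

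The main point requiring care is the consistency of the origins. We need $\wh X_H(O)=0$ at $O=\vphi(O)$, which holds by the construction in Lemma~\ref{lem_fddEqualityConstCurvature}. The identification of the radius in the case $\kappa>0$ also needs checking, and I would do it by an explicit case computation. For $\kappa>0$ and $\eta,\zeta\in\sph_d(1/\sqrt\kappa)$,
\begin{align*}
    d_{\mcl_{\kappa/a^2}}(a\eta,a\zeta)=\tfrac{a}{\sqrt\kappa}\arccos\!\left(\kappa\abrac{\eta,\zeta}\right)=a\,d_{\mcl_\kappa}(\eta,\zeta).
\end{align*}
The analogous identities hold for the hyperbolic case via $\operatorname{arccosh}$ and trivially for $\kappa=0$. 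The hypothesis $r<\inj_{\mcl_\kappa}(O)$ ensures that the ball is a genuine proper geodesic ball. Correspondingly, $ar<\inj_{\mcl_{\kappa/a^2}}(O)=a\,\inj_{\mcl_\kappa}(O)$ by \eqref{eq_modelSpace_injRadius}, although only the set identity is actually used.
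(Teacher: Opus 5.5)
Your proposal is correct and follows essentially the same route as the paper. The paper also applies Lemma~\ref{lem_fddEqualityConstCurvature}, identifies $\bb^{\mcl_\kappa}_r(O)=\bb^{\ol{\mcl_\kappa}}_{ar}(O)$ by scaling of curve lengths, and then pushes this ball forward under the isometry $\vphi$. Your additional points only make explicit what the paper leaves implicit: the continuity/countable-dense-set argument for passing from equality of finite dimensional distributions to equality of the supremum probabilities, and the explicit distance computation.
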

\begin{proof}
    Set $\ol{\mcl_\kappa}=(\mcl_\kappa, a^2 \abrac{.,.}_{\mcl_\kappa})$. The definition of the length of a curve \eqref{eq_curveLengthDef} implies that $d_{\ol{\mcl_\kappa}}(.,.) = a ~ d_{\mcl_\kappa}(.,.)$, which lets us obtain that
    \begin{align}\label{eq_ballScaled}
        \bb^{\mcl_\kappa}_r(O) = \bb_{a r}^{\ol{\mcl_\kappa}}(O).
    \end{align}
    We apply Lemma~\ref{lem_fddEqualityConstCurvature} using the isometry $\vphi: (\mcl_\kappa, a^2 \abrac{.,.}_{\mcl_\kappa}) \to \mcl_{\kappa/a^2}$ and then the ball equality \eqref{eq_ballScaled} to obtain that
    \begin{align*}
        \prob{\sup_{\eta\in\bb^{\mcl_\kappa}_r(O)} X_H(\eta) < \eps} = \prob{\sup_{\eta\in\bb^{\mcl_\kappa}_{r}(O)} a^{-H} \wh{X}_H(\vphi(\eta)) < \eps} = \prob{\sup_{\eta\in\bb^{\ol{\mcl_\kappa}}_{a r}(O)} a^{-H} \wh{X}_H(\vphi(\eta)) < \eps}.
    \end{align*}
    Since $\vphi$ is an isometry we know that $\vphi\left(\bb^{\ol{\mcl_{\kappa}}}_{a r}(O)\right) = \bb^{\mcl_{\kappa/a^2}}_{a r}(O)$ and thus
    \begin{gather*}
        \prob{\sup_{\eta\in\bb^{\ol{\mcl_\kappa}}_{a r}(O)} a^{-H} \wh{X}_H(\vphi(\eta)) < \eps}
        = \prob{\sup_{\eta\in\bb^{\mcl_{\kappa/a^2}}_{a r}(O)} \wh{X}_H(\eta) < \eps ~ a^H}. \qedhere
    \end{gather*}
\end{proof}

\begin{rem}\label{rem_hyperbolicDualProblem}
    Unfortunately, our technique is not applicable to calculate the persistence probability of the hyperbolic fractional L\'evy field on an expanding domain, e.g.\ $\bb^{\hyp_d}_T(O)$, $T\to\infty$, with a fixed barrier. The self-similarity property from Proposition~\ref{prop_selfSimRiemann} implies that
    \begin{align*}
        \prob{\sup_{\eta\in\bb^{\hyp_d}_T(O)} X_H(\eta) < 1} = \prob{\sup_{\eta\in\bb^{\hyp_d(1/T)}_{1}(O)} \wh{X}_H(\eta) < T^{-H}},
    \end{align*}
    which, at first, looks consistent with what we would expect. However, the volume of the $1$-ball in $\mcl_{-T^2} = \hyp_d(1/T)$ increases exponentially in $T$ 
    rather than polynomially and we have heavily relied on the uniform polynomial decay of ball volumes with respect to  their radius (cf.\ Lemma~\ref{lem_ballVolumeBoundsGeneralRiemann}).
\end{rem}

We can, however, calculate the persistence exponent with respect to  fixed domains of arbitrary Riemannian manifolds. For the upper bound we need the result for the persistence exponent of spherical fractional L\'evy fields, cf.\ \eqref{eq_sphericalPersistenceExponent}.
We then prove the general upper bound by comparing the persistence event on the general Riemannian manifold to the spherical case.

\begin{lem}\label{lem_riemannPersistenceUpperBound}
    The persistence probability of the fractional L\'evy field $(X_H(\eta))_{\eta\in K}$ on the relatively compact set $K$ of a connected Riemannian manifold $\mcl$, where $O$ is in the interior of $K$, satisfies
    \begin{align*}
        \prob{\sup_{\eta\in K} X_H(\eta) < \eps} \leq \eps^{\frac{d}{H} + \mathdutchcal{o}(1)}, && \text{ for } \eps \searrow 0.
    \end{align*}
\end{lem}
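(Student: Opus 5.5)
Since $O$ lies in the interior of $K$, we choose a small radius $r>0$ with $\bb_r^{\mcl}(O)\subseteq K$. Monotonicity of the supremum then gives
\begin{align*}
    \prob{\sup_{\eta\in K} X_H(\eta) < \eps} \leq \prob{\sup_{\eta\in \bb_r^{\mcl}(O)} X_H(\eta) < \eps},
\end{align*}
so it suffices to bound the persistence probability on a small geodesic ball. We take $r\leq r'$ from Lemma~\ref{lem_rauchToponogovComparisonEstimate}. We also fix a positive curvature $\kcl\geq \max\{\kcl',1\}$ with $r\leq \inj(\mcl_\kcl)=\pi/\sqrt{\kcl}$, shrinking $r$ if necessary. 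Then $\Psi_\kcl:\bb_r^{\mcl}(O)\to\bb_r^{\mcl_\kcl}(O)$ is a bijection which, by \eqref{eq_expMapDistancePreservingGeneralised}, preserves distances to $O$. By the left inequality of Lemma~\ref{lem_rauchToponogovComparisonEstimate}, it does not increase mutual distances: $d_\kcl(\Psi_\kcl(\eta),\Psi_\kcl(\zeta))\leq d(\eta,\zeta)$.

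Next we transfer this to covariances, exactly as in Lemma~\ref{lem_hfbmUpperBound} but with the sphere in place of Euclidean space. Let $(S_H(\xi))_{\xi\in\mcl_\kcl}$ be the spherical fractional L\'evy field with origin $O$; it exists since $H\leq 1/2$. Define $Y(\eta):=S_H(\Psi_\kcl(\eta))$ for $\eta\in\bb_r^{\mcl}(O)$. Then $\expec{Y(\eta)^2}=d(O,\eta)^{2H}=\expec{X_H(\eta)^2}$. Moreover, since $x\mapsto x^{2H}$ is increasing,
\begin{align*}
    \expec{X_H(\eta)X_H(\zeta)}=\eh\left(d(O,\eta)^{2H}+d(O,\zeta)^{2H}-d(\eta,\zeta)^{2H}\right)\leq \expec{Y(\eta)Y(\zeta)}.
\end{align*}
Both fields are separable and continuous: $X_H$ by Lemma~\ref{lem_epsBallSupEstimate} applied on relatively compact sets (the entropy argument there does not use homogeneity), and $Y$ as a composition of continuous maps. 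Slepian's Lemma~\ref{lem_Slepian} with the constant barrier $f\equiv\eps$ therefore yields
\begin{align*}
    \prob{\sup_{\bb_r^{\mcl}(O)} X_H<\eps}\leq \prob{\sup_{\xi\in\bb_r^{\mcl_\kcl}(O)} S_H(\xi)<\eps}.
\end{align*}

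It remains to bound the spherical persistence probability on a small cap by $\eps^{d/H+\odcl(1)}$. The known result \eqref{eq_sphericalPersistenceExponent} concerns the whole sphere $\sph_d$, so we rescale first. Applying Proposition~\ref{prop_selfSimRiemann} with $a:=\sqrt{\kcl}$ turns the ball into $\bb_{\sqrt{\kcl}\,r}^{\sph_d}(O)$ on the unit sphere and the barrier into $\eps\,\kcl^{H/2}$; the constant factor does not affect the exponent. To pass from a cap to the whole sphere, we use the positive correlations of $S_H$ (Remark~\ref{rem_proofOfDomainsWithoutZeroAndPositiveCorrelations}) together with Corollary~\ref{cor_SlepianPosCorr}, splitting $\sph_d$ into the cap $A=\bb_{\sqrt{\kcl}r}(O)$ and its complement $B$. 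This gives
\begin{align*}
    \prob{\sup_{\sph_d}S_H<\eps}\geq \prob{\sup_A S_H<\eps}\cdot\prob{\sup_B S_H<\eps}.
\end{align*}
The main obstacle is a lower bound on the second factor that is polynomial in $\eps$ (rather than merely positive) as $\eps\to0$. Here $\prob{\sup_B S_H\leq 0}>0$ by Lemma~\ref{lem_NonPersistenceOnDomainsWithoutZeroRiemann}, because $\inf_B\expec{S_H^2}\geq(\sqrt{\kcl}r)^{2H}>0$. The event $\{\sup_B S_H<\eps\}$ contains $\{\sup_B S_H\leq 0\}$, so its probability is bounded below by a constant independent of $\eps$. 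Dividing, we obtain $\prob{\sup_A S_H<\eps}\leq C\,\prob{\sup_{\sph_d}S_H<\eps}=\eps^{d/H+\odcl(1)}$ by \eqref{eq_sphericalPersistenceExponent}. Chaining the three inequalities proves the lemma. One point still has to be checked: Corollary~\ref{cor_SlepianPosCorr} must be applicable with the constant barrier $\eps$ and with $B$ closed but non-compact in the relevant sense. Since $\sph_d$ is compact, this is routine.
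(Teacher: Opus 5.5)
Your proof is correct, and it differs from the paper's in one step: how you pass from the local Slepian comparison to the whole-sphere result \eqref{eq_sphericalPersistenceExponent}.

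The paper avoids spherical caps altogether. It takes $r$ small and sets $\kcl:=\pi^2/r^2$, so that $r=\inj(\mcl_\kcl)$. Then $\Psi_\kcl$ maps $\bb_r^\mcl(O)$ onto the punctured sphere $\sph_d(r/\pi)\setminus\{\ol{O}\}$. Slepian's lemma therefore compares directly with persistence on the whole sphere, the antipodal point is recovered by continuity, and a single rescaling to $\sph_d$ finishes.

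You instead allow $r<\pi/\sqrt{\kcl}$ and compare with a genuine cap $A$. You then show $\prob{\sup_A S_H<\eps}\le C\,\prob{\sup_{\sph_d}S_H<\eps}$ using Corollary~\ref{cor_SlepianPosCorr} together with Lemma~\ref{lem_NonPersistenceOnDomainsWithoutZeroRiemann} on the complementary cap $B$. This is the same splitting device the paper uses for the lower bound in Lemma~\ref{lem_riemannPersistenceLowerBound}.

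What each route buys:
\begin{itemize}
\item The paper's route is shorter and needs no non-persistence input. However, it operates at the boundary case $r=\inj(\mcl_\kcl)$ of Lemma~\ref{lem_rauchToponogovComparisonEstimate} and must deal with the missing antipodal point.
\item Your route costs one extra lemma but decouples the choices of $r$ and $\kcl$. As a by-product, it shows that the exponent $d/H$ in \eqref{eq_sphericalPersistenceExponent} carries over to caps centred at $O$, since the reverse inequality is trivial by monotonicity. The paper explicitly sidesteps this, remarking that the spherical result is only available for whole spheres.
\end{itemize}

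Your closing caveat is unnecessary. $B$ is a closed subset of $\sph_d$, hence compact, and Corollary~\ref{cor_SlepianPosCorr} has no compactness hypothesis in any case.
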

\begin{proof}
    Since $O$ is in the interior of $K$, we may choose $r'>0$ and $\kcl > 0$ with $\kcl \geq \kcl'$ according to Lemma~\ref{lem_rauchToponogovComparisonEstimate} so that $B_{r'}(O)\subseteq K$, and some $0<r\leq r'$, such that $r = \frac{\pi}{\sqrt{\kcl}}$. This choice implies that $\mcl_\kcl = \sph_d(1/\sqrt{\kcl}) = \sph_d(r/\pi)$ (cf.\ \eqref{eq_modelSpace_injRadius}). Thus, $\inj_{\sph_d(r/\pi)}(O) = r$, i.e.\ $\bb_r^{\sph_d(r/\pi)}(O) = \sph_d(r/\pi)\setminus\{\ol{O}\}$, where $\ol{O}$ is the point antipodal to $O\in\sph_d(r)$.

    Then we may directly apply \eqref{eq_expMapDistancePreservingGeneralised} and Lemma~\ref{lem_rauchToponogovComparisonEstimate} with $\Psi:=\Psi_{\kcl}$ to obtain that
    \begin{align*}
        d_\mcl(O,\eta) &= d_{\sph_d(r/\pi)}(O,\Psi(\eta)),\qquad\qquad
        d_\mcl(\eta,\zeta) \geq d_{\sph_d(r/\pi)}(\Psi(\eta),\Psi(\zeta))
    \end{align*}
    for all $\eta, \zeta\in \bb_r^\mcl(O)$. Thus, if $(\wh{X}_H(\eta))_{\eta\in\sph_d(r/\pi)}$ is a fractional L\'evy field on $\sph_d(r/\pi)$, we obtain that
    \begin{align*}
        \expec{X_H(\eta)^2} & = \expec{\wh{X}_H(\Psi(\eta))^2},\qquad\qquad  
        \expec{X_H(\eta) X_H(\zeta)} & \leq \expec{\wh{X}_H(\Psi(\eta)) \wh{X}_H(\Psi(\zeta))}
    \end{align*}
    for all $\eta, \zeta\in \bb_r^\mcl(O)$ in the same way that we derived the inequality in the proof of Lemma~\ref{lem_hfbmUpperBound}.
    
    By restriction to a smaller domain and then by Slepian's Lemma in the form of Lemma~\ref{lem_Slepian} comparing to the fractional L\'evy field $(\wh{X}_H(\eta))_{\eta\in\sph_{d}(r/\pi)}$ we obtain 
    \begin{align*}
        \prob{\sup_{\eta\in K} X_H(\eta) < \eps}
        \leq \prob{\sup_{\eta\in \bb_r^\mcl(O)} X_H(\eta) < \eps}
        \leq \prob{\sup_{\eta\in \sph_{d}(r/\pi)} \wh{X}_H(\eta) < \eps}.
    \end{align*}
    Recall that the map $\Psi:\bb_r^\mcl(O)\to\bb_r^{\sph_d(r/\pi)}(O) = \sph_d(r/\pi)\setminus\{\ol{O}\}$ is bijective only when the point $\ol{O}$ is excluded. The missing point $\ol{O}$ can be recovered by a.s.\ continuity of the process on the sphere of radius $r/\pi$ and by intersecting with the event of probability one that $\{\wh{X}_H(\ol{O})\neq \eps\}$. We compare to the whole sphere $\sph_d(r/\pi)$, because \eqref{eq_sphericalPersistenceExponent} only works for whole spheres, and not for spherical caps. Using Proposition~\ref{prop_selfSimRiemann} together with the definition of our model spaces \eqref{eq_defModelSpace} and by disregarding antipodal points as before we obtain that
    \begin{align*}
        \prob{\sup_{\eta\in \sph_{d}(r/\pi)} \wh{X}_H(\eta) < \eps}
        = \prob{\sup_{\eta\in \sph_{d}} S_H(\eta) < \eps \left(\frac{\pi}{r}\right)^{H}},
    \end{align*}
    where $(S_H(\eta))_{\eta\in\sph_d}$ is the spherical fractional L\'evy field. The statement now follows from the persistence bound in \eqref{eq_sphericalPersistenceExponent} using that $r$ and $\pi$ are constants.
\end{proof}

The lower bound is obtained similarly by comparing to the standard hyperbolic case.

\begin{lem}\label{lem_riemannPersistenceLowerBound}
    The persistence probability of a fractional L\'evy field $(X_H(\eta))_{\eta\in K}$ on the relatively compact set $K$ of a connected $d$-dimensional Riemannian manifold $\mcl$, where $O$ is in the interior of $K$, satisfies
    \begin{align*}
        \prob{\sup_{\eta\in K} X_H(\eta) < \eps} \geq c ~ \eps^{\frac{d}{H}} ~ \left(\sqrt{-\log \eps}\right)^{-\frac{d}{H}}
    \end{align*}
    for some constant $c>0$ and all $\eps>0$ small enough.
\end{lem}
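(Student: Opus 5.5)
The plan is to work locally around $O$: compare $X_H$ on a small ball with the hyperbolic field via Lemma~\ref{lem_rauchToponogovComparisonEstimate} and Slepian's Lemma, then handle the rest of $K$ with positive correlations. This mirrors the proof of Lemma~\ref{lem_riemannPersistenceUpperBound}, but uses the other inequality of the comparison lemma.

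\textbf{Step 1 (local comparison with a hyperbolic model space).} Let $r'>0$ and $\kappa'$ be as in Lemma~\ref{lem_rauchToponogovComparisonEstimate}, and fix some $\kappa<\min\{\kappa',0\}$. Since $\inj(\mcl_\kappa)=\infty$ by \eqref{eq_modelSpace_injRadius}, we may choose $0<r\leq r'$ with $\olb_r^\mcl(O)\subseteq K$ and $r<\inj_\mcl(O)$. Set $\Psi:=\Psi_\kappa$. By \eqref{eq_expMapDistancePreservingGeneralised} we have $d_\mcl(O,\eta)=d_\kappa(O,\Psi(\eta))$, and Lemma~\ref{lem_rauchToponogovComparisonEstimate} gives $d_\mcl(\eta,\zeta)\leq d_\kappa(\Psi(\eta),\Psi(\zeta))$ for $\eta,\zeta\in\bb_r^\mcl(O)$.

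Let $(Y_H(\xi))_{\xi\in\mcl_\kappa}$ be the fractional L\'evy field on $\mcl_\kappa$, which exists by Lemma~\ref{lem_fddEqualityConstCurvature}. The field $X_H$ and the field $Y_H\circ\Psi$ then have equal variances on $\bb_r^\mcl(O)$. Writing the covariances as in the proof of Lemma~\ref{lem_hfbmUpperBound}, they satisfy
\begin{align*}
\expec{X_H(\eta)X_H(\zeta)}\geq \expec{Y_H(\Psi(\eta))Y_H(\Psi(\zeta))}.
\end{align*}
Slepian's Lemma (Lemma~\ref{lem_Slepian}), applied with the roles reversed and $f\equiv\eps$, together with the bijectivity of $\Psi:\bb_r^\mcl(O)\to\bb_r^{\mcl_\kappa}(O)$, yields
\begin{align*}
\prob{\sup_{\eta\in\bb_r^\mcl(O)}X_H(\eta)<\eps}\geq\prob{\sup_{\xi\in\bb_r^{\mcl_\kappa}(O)}Y_H(\xi)<\eps}.
\end{align*}

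\textbf{Step 2 (rescaling to $\hyp_d$).} Apply Proposition~\ref{prop_selfSimRiemann} with $a:=\sqrt{-\kappa}$, so that $\mcl_{\kappa/a^2}=\mcl_{-1}=\hyp_d$. The right-hand side above then equals $\prob{\sup_{\bb_{ar}^{\hyp_d}(O)}X_H<\eps a^H}$ for the hyperbolic field. Lemma~\ref{lem_hfbmLowerBound} with $R:=ar$ bounds this from below by $C_2(a^H\eps)^{d/H}(\sqrt{-\log(a^H\eps)})^{-d/H}$. Since $a$ is a fixed constant, this is at least $c'\eps^{d/H}(\sqrt{-\log\eps})^{-d/H}$ for small $\eps$.

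\textbf{Step 3 (the remainder of $K$).} Write $K\subseteq \bb_r^\mcl(O)\cup A$ with $A:=K\setminus\bb_r^\mcl(O)$. The set $A$ is relatively compact, and $\expec{X_H(\eta)^2}=d(O,\eta)^{2H}\geq r^{2H}>0$ on $A$. By Remark~\ref{rem_proofOfDomainsWithoutZeroAndPositiveCorrelations} the field is positively correlated because $H\leq 1/2$. Corollary~\ref{cor_SlepianPosCorr} with $f\equiv\eps$ therefore gives
\begin{align*}
\prob{\sup_K X_H<\eps}\geq\prob{\sup_{\bb_r^\mcl(O)}X_H<\eps}\cdot\prob{X_H\leq 0 \text{ on } A},
\end{align*}
where for the second factor we used $\{X_H\le 0 \text{ on } A\}\subseteq\{X_H<\eps \text{ on } A\}$. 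By Lemma~\ref{lem_NonPersistenceOnDomainsWithoutZeroRiemann} the second factor is a positive constant independent of $\eps$. Combining this with Steps 1 and 2 gives the claim.

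\textbf{Expected obstacle.} The main technical point is justifying the Slepian comparisons. They require separability, which here comes from a.s.\ continuity, on a manifold that need not be homogeneous, so Lemma~\ref{lem_epsBallSupEstimate} does not apply verbatim. I would instead repeat its Dudley argument locally, using the covering bound $N_X(x)\le c\,x^{-d/H}$ on relatively compact sets from \cite[Prop.\ 3.1]{KraetschmerUrusov2022}. A second, minor point is the open/closed ball distinction between $\sup<\eps$ and $\le$ at the boundary of $\bb_r^\mcl(O)$; continuity of the fields takes care of it.
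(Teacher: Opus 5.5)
Your proposal is correct and is essentially the paper's proof. It uses the same splitting of $K$ via Corollary~\ref{cor_SlepianPosCorr} and Lemma~\ref{lem_NonPersistenceOnDomainsWithoutZeroRiemann}, the same comparison with the field on $\mcl_\kappa$, $\kappa<0$, through Lemma~\ref{lem_rauchToponogovComparisonEstimate} and Slepian's Lemma, and the same rescaling to $\hyp_d$ by Proposition~\ref{prop_selfSimRiemann} with $a=\sqrt{\abs{\kappa}}$ followed by Lemma~\ref{lem_hfbmLowerBound} with $R=r\sqrt{\abs{\kappa}}$; you only carry out these steps in a different order. Your extra care about a.s.\ continuity on a non-homogeneous $\mcl$, which the paper leaves implicit, is appropriate. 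One small nit: the radius constraint in Lemma~\ref{lem_rauchToponogovComparisonEstimate} refers to $\inj(\mcl_\kcl)$, not $\inj(\mcl_\kappa)$, so you should also take $r\le\inj(\mcl_{\kcl'})$.
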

\begin{proof}
    Since $O$ is in the interior of $K$, we may choose $r>0$ and some $\kappa < 0$ with $\kappa \leq \kappa'$ according to Lemma~\ref{lem_rauchToponogovComparisonEstimate}.
    Now, we split the persistence probability into two pieces
    \begin{align}\label{eq_lowerGeneralRiemannianBoundSplitting}
        \prob{\sup_{\eta\in K} X_H(\eta) < \eps} 
        \geq \prob{\sup_{\eta\in K\setminus \bb_r^\mcl(O)} X_H(\eta) < \eps} ~ \prob{\sup_{\eta\in \bb_r^\mcl(O)} X_H(\eta) < \eps}
    \end{align}
    using Corollary~\ref{cor_SlepianPosCorr}, which is possible, since the field is positively correlated everywhere, cf.\ Remark~\ref{rem_proofOfDomainsWithoutZeroAndPositiveCorrelations}. 
    
    An application of Lemma~\ref{lem_NonPersistenceOnDomainsWithoutZeroRiemann} yields that the probability of the process being non-positive on $K\setminus \bb_r^\mcl(O)$, i.e.\ the first factor in \eqref{eq_lowerGeneralRiemannianBoundSplitting}, is bounded from below by a positive constant.

    Then we may directly apply \eqref{eq_expMapDistancePreservingGeneralised} and Lemma~\ref{lem_rauchToponogovComparisonEstimate} with $\Psi:=\Psi_{\kappa}$ to obtain that
    \begin{align*}
        d_\mcl(O,\eta) &= d_{\mcl_\kappa}(O,\Psi(\eta)), \qquad 
        d_\mcl(\eta, \zeta)  \leq d_{\mcl_\kappa}(\Psi(\eta), \Psi(\zeta))
    \end{align*}

    holds for any $\eta,\zeta\in \bb_r^\mcl(O)$. Therefore, if $(\wh{X}_H(\eta))_{\eta\in\mcl_\kappa}$ is the fractional L\'evy field on $\mcl_\kappa$, we obtain that
    \begin{align*}
        \expec{X_H(\eta)^2} & = \expec{\wh{X}_H(\Psi(\eta))^2}, \qquad 
        \expec{X_H(\eta) X_H(\zeta)}  \geq \expec{\wh{X}_H(\Psi(\eta)) \wh{X}_H(\Psi(\zeta))}
    \end{align*}
    for any $\eta,\zeta\in\bb_r^\mcl(O)$ in the same way that we derived the reverse inequality in the proof of Lemma~\ref{lem_hfbmUpperBound}. 
    From the above estimate \eqref{eq_lowerGeneralRiemannianBoundSplitting}, Lemma~\ref{lem_NonPersistenceOnDomainsWithoutZeroRiemann},  Slepian's Lemma (Lemma~\ref{lem_Slepian}) and the self-similarity property for persistence probabilities in model spaces from Proposition~\ref{prop_selfSimRiemann} with $a:=\sqrt{\abs{\kappa}}$ we may thus infer the lower bound
    \begin{align*}
        \prob{\sup_{\eta\in K} X_H(\eta) < \eps} 
        &\geq c' ~ \prob{\sup_{\eta\in \bb_r^{\mcl}(O)} X_H(\eta) < \eps}
        \geq c' ~ \prob{\sup_{\eta\in \bb_r^{\mcl_\kappa}(O)} \wh{X}_H(\eta) < \eps}\\
        &= c' ~ \prob{\sup_{\eta\in \bb_{r \sqrt{\abs{\kappa}}}^{\hyp_d}(O)} X_H^{\hyp_d}(\eta) < \eps ~ \abs{\kappa}^{H/2}}
        \geq c ~ \eps^{\frac{d}{H}} ~ \left(\sqrt{-\log \eps}\right)^{-\frac{d}{H}},
    \end{align*}
    where in the last step we used Lemma~\ref{lem_hfbmLowerBound} for $R:=r \sqrt{\abs{\kappa}}$ and $\eps>0$ small enough and where the constants $c,c'>0$ may depend on $\mcl, \kappa, H, r$. The stochastic process $(X_H^{\hyp_d}(\eta))_{\eta\in\hyp_d}$ is the hyperbolic fractional L\'evy field. This proves the statement.
\end{proof}

Now Lemma~\ref{lem_riemannPersistenceUpperBound} and Lemma~\ref{lem_riemannPersistenceLowerBound} imply Theorem~\ref{thm_mainRiemannianManifold}.

\bibliographystyle{alpha}
\bibliography{bibliography.bib}

\begin{thebibliography}{BdlHV08}

\bibitem[AB76]{AskeyBingham1976}
R.~Askey and N.~H. Bingham.
\newblock {Gaussian processes on compact symmetric spaces}.
\newblock {\em Zeitschrift für Wahrscheinlichkeitstheorie und verwandte
  Gebiete}, 37(2):127–143, 1976.

\bibitem[AE09]{AmannEscher2009book}
H.~Amann and J.~Escher.
\newblock {\em Analysis III}.
\newblock Birkh\"{a}user Basel, 2009.

\bibitem[AH26]{AurzadaHelmer2026}
F.~Aurzada and M.~Helmer.
\newblock Persistence probabilities of spherical fractional brownian motion.
\newblock {\em ALEA Lat. Am. J. Probab. Math. Stat.}, 23(1):221, 2026.

\bibitem[Arc92]{Arcones1992}
M.~A. Arcones.
\newblock On the arg max of a {G}aussian process.
\newblock {\em Statistics \& Probability Letters}, 15(5):373--374, 1992.

\bibitem[Arc95]{Arcones1995}
M.~A. Arcones.
\newblock On the law of the iterated logarithm for gaussian processes.
\newblock {\em Journal of Theoretical Probability}, 8(4):877–903, 1995.

\bibitem[AS15]{AurzadaSimon2015}
F.~Aurzada and T.~Simon.
\newblock {Persistence probabilities and exponents}.
\newblock In {\em L\'evy matters. {V}}, volume 2149 of {\em Lecture Notes in
  Mathematics}, pages 183--224. Springer, Cham, 2015.

\bibitem[AT09]{AdlerTaylor2009}
R.~J. Adler and J.~E. Taylor.
\newblock {\em {Random Fields and Geometry}}.
\newblock Springer Monographs in Mathematics. Springer New York, 2009.

\bibitem[Bau13]{Baumgarten2013thesis}
C.~Baumgarten.
\newblock {\em {Persistence of sums of independent random variables, iterated
  processes and fractional Brownian motion}}.
\newblock PhD thesis, Technische Universit\"at Berlin, 2013.
\newblock \url{https://depositonce.tu-berlin.de/handle/11303/3917}.

\bibitem[BdlHV08]{BekkaEtAl2008book}
B.~Bekka, P.~de~la Harpe, and A.~Valette.
\newblock {\em {Kazhdan’s Property (T)}}.
\newblock New Mathematical Monographs. Cambridge University Press, 2008.

\bibitem[BHOZ08]{BiaginiEtAl2008book}
F.~Biagini, Y.~Hu, B.~\O{}ksendal, and T.~Zhang.
\newblock {\em {Stochastic calculus for fractional {B}rownian motion and
  applications}}.
\newblock Probability and its Applications (New York). Springer-Verlag London,
  Ltd., London, 2008.

\bibitem[BMS13]{BrayMajumdarSchehr2013}
A.~J. Bray, S.~N. Majumdar, and G.~Schehr.
\newblock {Persistence and first-passage properties in nonequilibrium systems}.
\newblock {\em Advances in Physics}, 62:225--361, 2013.

\bibitem[Cha01]{Chavel2001}
I.~Chavel.
\newblock {\em Isoperimetric inequalities}, volume 145 of {\em Cambridge Tracts
  in Mathematics}.
\newblock Cambridge University Press, Cambridge, 2001.

\bibitem[Cha06]{Chavel2006}
I.~Chavel.
\newblock {\em Riemannian Geometry: A Modern Introduction}.
\newblock Cambridge University Press, 2006.

\bibitem[Che57]{Chentsov1957}
N.~N. Chentsov.
\newblock Lévy {B}rownian motion for several parameters and generalized white
  noise.
\newblock {\em Theory of Probability {\&} Its Applications}, 2(2):265–266,
  1957.

\bibitem[Chr70]{Christensen1970}
J.~P.~R. Christensen.
\newblock On some measures analogous to {H}aar measure.
\newblock {\em Mathematica Scandinavica}, 26:103--106, 1970.

\bibitem[CL12]{CohenLifshits2012}
S.~Cohen and M.~A. Lifshits.
\newblock Stationary {G}aussian random fields on hyperbolic spaces and on
  {E}uclidean spheres.
\newblock {\em ESAIM: Probability and Statistics}, 16:165–221, 2012.

\bibitem[Far73]{Faraut1973}
J.~Faraut.
\newblock Fonction brownienne sur une vari\'et\'e riemannienne.
\newblock {\em S\'eminaire de probabilit\'es}, 7:61--76, 1973.

\bibitem[FH74]{Faraut1974}
J.~Faraut and K.~Harzallah.
\newblock Distances hilbertiennes invariantes sur un espace homogène.
\newblock {\em Annales de l’Institut Fourier}, 24(3):171–217, 1974.

\bibitem[FLH15]{FeragenLauzeHauberg2015}
A.~Feragen, F.~Lauze, and S.~Hauberg.
\newblock Geodesic exponential kernels: When curvature and linearity conflict.
\newblock In {\em 2015 IEEE Conference on Computer Vision and Pattern
  Recognition (CVPR)}, pages 3032--3042, 2015.

\bibitem[Gan67]{Gangolli1967}
R.~Gangolli.
\newblock Positive definite kernels on homogeneous spaces and certain
  stochastic processes related to {L}évy's brownian motion of several
  parameters.
\newblock {\em Annales de l’Institut Henri Poincaré Probabilités et
  statistiques}, 3(2):121--226, 1967.

\bibitem[Gel14]{Gelbaum2014}
Z.~A. Gelbaum.
\newblock {Fractional Brownian fields over manifolds}.
\newblock {\em Transactions of the American Mathematical Society},
  366(9):4781--4814, 2014.

\bibitem[Her60]{Hermann1960Remarks}
R.~Hermann.
\newblock Remarks on the foundations of integral geometry.
\newblock {\em Rendiconti del Circolo Matematico di Palermo}, 9(1):91–96,
  1960.

\bibitem[HKM02]{HjorthKokkendorffMarkvorsen2002}
P.~G. Hjorth, S.~L. Kokkendorff, and S.~Markvorsen.
\newblock Hyperbolic spaces are of strictly negative type.
\newblock {\em Proceedings of the American Mathematical Society},
  130(1):175--181, 2002.

\bibitem[Ist05]{Istas2005}
J.~Istas.
\newblock {Spherical and hyperbolic hractional Brownian motion}.
\newblock {\em Electronic Communications in Probability}, 10:254 -- 262, 2005.

\bibitem[Ist06]{Istas2006stable}
J.~Istas.
\newblock On fractional stable fields indexed by metric spaces.
\newblock {\em Electronic Communications in Probability}, 11, 2006.

\bibitem[Ist12]{Istas2012}
J.~Istas.
\newblock {Manifold indexed fractional fields}.
\newblock {\em ESAIM: Probability and Statistics}, 16:222--276, 2012.

\bibitem[Kar89]{Karcher1989}
H.~Karcher.
\newblock Riemannian comparison constructions.
\newblock In {\em Global differential geometry}, volume~27 of {\em MAA Stud.
  Math.}, pages 170--222. Math. Assoc. America, Washington, DC, 1989.

\bibitem[Kol40]{Kolmogoroff1940}
A.~N. Kolmogoroff.
\newblock Wienersche {S}piralen und einige andere interessante {K}urven im
  {H}ilbertschen {R}aum.
\newblock {\em C. R. (Doklady) Acad. Sci. URSS (N.S.)}, 26:115--118, 1940.

\bibitem[KP90]{KimPollard1990}
J.~K. Kim and D.~Pollard.
\newblock Cube root asymptotics.
\newblock {\em The Annals of Statistics}, 18(1):191--219, 1990.

\bibitem[KU22]{KraetschmerUrusov2022}
V.~Kr\"{a}tschmer and M.~Urusov.
\newblock {A {K}olmogorov–{C}hentsov {T}ype {T}heorem on {G}eneral {M}etric
  {S}paces with {A}pplications to {L}imit {T}heorems for {B}anach-{V}alued
  {P}rocesses}.
\newblock {\em Journal of Theoretical Probability}, 36(3):1454–1486, 2022.

\bibitem[Lee12]{Lee2012book}
J.~M. Lee.
\newblock {\em Introduction to Smooth Manifolds}.
\newblock Springer New York, 2012.

\bibitem[L{\'{e}}v40]{Levy1940}
P.~L{\'{e}}vy.
\newblock {Le Mouvement Brownien Plan}.
\newblock {\em Amer. J. Math.}, 62:487--550, 1940.

\bibitem[L{\'{e}}v65]{Levy1965book}
P.~L{\'{e}}vy.
\newblock {\em {Processus stochastiques et mouvement brownien}}.
\newblock Gauthier-Villars \& Cie, Paris, 1965.

\bibitem[LP18]{LopezPimentel2018}
S.~I. López and L.~P.~R. Pimentel.
\newblock On the location of the maximum of a process: Lévy, gaussian and
  random field cases.
\newblock {\em Stochastics}, 90(8):1221--1237, 2018.

\bibitem[LP23]{LucicPasqualetto2023}
D.~Lučić and E.~Pasqualetto.
\newblock The metric-valued {L}ebesgue differentiation theorem in measure
  spaces and its applications.
\newblock {\em Advances in Operator Theory}, 8(2), 2023.

\bibitem[LSSW16]{LodhiaEtAl2016}
A.~Lodhia, S.~Sheffield, X.~Sun, and S.~S. Watson.
\newblock {Fractional {G}aussian fields: {A} survey}.
\newblock {\em Probability Surveys}, 13:1--56, 2016.

\bibitem[MC68]{ChentsovMorozova1968}
E.~A. Morozova and N.~N. Chentsov.
\newblock {P. Lévy’s Random Fields}.
\newblock {\em Theory of Probability \& Its Applications}, 13(1):153--156,
  1968.

\bibitem[Mis08]{Mishura2008book}
Y.~S. Mishura.
\newblock {\em {Stochastic calculus for fractional {B}rownian motion and
  related processes}}, volume 1929 of {\em Lecture Notes in Mathematics}.
\newblock Springer-Verlag, Berlin, 2008.

\bibitem[Mol67]{Molchan1967}
G.~M. Molchan.
\newblock {On some problems concerning Brownian motion in Lévy’s sense}.
\newblock {\em Theory of Probability \& Its Applications}, 12(4):682--690,
  1967.

\bibitem[Mol79]{Molchan1979}
G.~M. Molchan.
\newblock Homogeneous random fields on symmetric spaces of rank one.
\newblock {\em Teor. Veroyatnost. i Mat. Statist.}, 21:123--148, 167, 1979.

\bibitem[Mol88]{Molino1988book}
P.~Molino.
\newblock {\em Riemannian Foliations}.
\newblock Birkh\"{a}user Boston, 1988.

\bibitem[Mol99]{Molchan99}
G.~M. Molchan.
\newblock {Maximum of a fractional {B}rownian motion: probabilities of small
  values}.
\newblock {\em Communications in Mathematical Physics}, 205(1):97--111, 1999.

\bibitem[Mol17]{Molchan2017}
G.~M. Molchan.
\newblock {Survival exponents for fractional {B}rownian motion with
  multivariate time}.
\newblock {\em ALEA Lat. Am. J. Probab. Math. Stat.}, 14(1):1--7, 2017.

\bibitem[Mol18]{Molchan2018}
G.~M. Molchan.
\newblock {Persistence exponents for {G}aussian random fields of fractional
  {B}rownian motion type}.
\newblock {\em Journal of Statistical Physics}, 173(6):1587--1597, 2018.

\bibitem[MOR14]{MetzlerEtAl2014book}
R.~Metzler, G.~Oshanin, and S.~Redner, editors.
\newblock {\em First-passage phenomena and their applications}.
\newblock World Scientific Publishing Co. Pte. Ltd., Hackensack, NJ, 2014.

\bibitem[MVN68]{MandelbrotVanNess1968}
B.~B. Mandelbrot and J.~W. Van~Ness.
\newblock Fractional {B}rownian motions, fractional noises and applications.
\newblock {\em SIAM Review}, 10:422--437, 1968.

\bibitem[Nou12]{Nourdin2012book}
I.~Nourdin.
\newblock {\em {Selected aspects of fractional {B}rownian motion}}, volume~4 of
  {\em Bocconi \& Springer Series}.
\newblock Springer, Milan; Bocconi University Press, Milan, 2012.

\bibitem[Pet16]{Petersen2016}
P.~Petersen.
\newblock {\em {Riemannian Geometry}}.
\newblock Springer International Publishing, third edition, 2016.

\bibitem[Pey17]{Peyre2017}
R.~Peyre.
\newblock Fractional {B}rownian motion satisfies two-way crossing.
\newblock {\em Bernoulli}, 23(4B):3571--3597, 2017.

\bibitem[Rat19]{Ratcliffe2019}
J.~G. Ratcliffe.
\newblock {\em Foundations of Hyperbolic Manifolds}.
\newblock Springer International Publishing, 2019.

\bibitem[Rit23]{Ritore2023}
M.~Ritoré.
\newblock {\em Isoperimetric Inequalities in Riemannian Manifolds}.
\newblock Springer International Publishing, 2023.

\bibitem[San04]{Santaló2004}
L.~A. Santaló.
\newblock {\em Integral Geometry and Geometric Probability}.
\newblock Cambridge Mathematical Library. Cambridge University Press, 2nd
  edition, 2004.

\bibitem[Sch38]{Schoenberg1938}
I.~J. Schoenberg.
\newblock Metric spaces and positive definite functions.
\newblock {\em Transactions of the American Mathematical Society},
  44(3):522–536, 1938.

\bibitem[She16]{Shen2016}
Y.~Shen.
\newblock Random locations, ordered random sets and stationarity.
\newblock {\em Stochastic Processes and their Applications}, 126(3):906--929,
  2016.

\bibitem[Sle62]{Slepian62}
D.~Slepian.
\newblock {The one-sided barrier problem for {G}aussian noise}.
\newblock {\em The Bell System Technical Journal}, 41(2):463--501, 1962.

\bibitem[SS13]{SamorodnitskyShen2013}
G.~Samorodnitsky and Y.~Shen.
\newblock Is the location of the supremum of a stationary process nearly
  uniformly distributed?
\newblock {\em The Annals of Probability}, 41(5):3494--3517, 2013.

\bibitem[SSV12]{SchillingSongVondracek2012}
R.~L. Schilling, R.~Song, and Z.~Vondracek.
\newblock {\em Bernstein Functions}.
\newblock De Gruyter, Berlin, Boston, 2012.

\bibitem[Tak87]{Takenaka1987}
S.~Takenaka.
\newblock Representation of {E}uclidean {R}andom {F}ield.
\newblock {\em Nagoya Mathematical Journal}, 105:19–31, 1987.

\bibitem[Tak91]{Takenaka1991}
S.~Takenaka.
\newblock Integral-geometric construction of self-similar stable processes.
\newblock {\em Nagoya Mathematical Journal}, 123:1–12, 1991.

\bibitem[TKU81]{TakenakaKuboUrakawa1981}
S.~Takenaka, I.~Kubo, and H.~Urakawa.
\newblock {B}rownian motion parametrized with metric space of constant
  curvature.
\newblock {\em Nagoya Mathematical Journal}, 82:131–140, 1981.

\bibitem[Ven16]{VenetThesis2016}
N.~Venet.
\newblock {\em {On the existence of fractional brownian fields indexed by
  manifolds}}.
\newblock PhD thesis, {Universit{\'e} Paul Sabatier - Toulouse III}, 2016.
\newblock \url{https://theses.hal.science/tel-01825845}.

\bibitem[Vid67]{Vidal1967}
E.~Vidal.
\newblock On regular foliations.
\newblock {\em Annales de l’Institut Fourier}, 17(1):129–133, 1967.

\end{thebibliography}

\appendix
\section{Appendix}

\subsection{Supplementary facts and proofs}

\begin{lem}\label{lem_ballVolume}
    The volume $\sigma(\bb_r^{\mcl_\kappa}(O))$ with respect to the Riemannian measure $\sigma$ of the geodesic ball with radius $0<r<\inj(\mcl_\kappa)$ in the $d$-dimensional model space $\mcl_\kappa$ satisfies:
    \begin{align}\label{eq_ballVolumeBoundsModelSurfaces}
        c~\eps^d \leq \sigma(\bb_\eps^{\mcl_\kappa}(O)) \leq C ~ \eps^{d}
    \end{align}
    for constants $c,C>0$ depending on $\kappa$ and $r$ and all $0<\eps<r$.
\end{lem}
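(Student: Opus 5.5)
I would prove Lemma~\ref{lem_ballVolume} by computing the volume of geodesic balls in the model spaces explicitly in geodesic polar coordinates and then bounding the radial density on the compact range $[0,r]$. By homogeneity of $\mcl_\kappa$ (Example~\ref{expl_shortestDistancesAndIsometriesModelSpaces}) it suffices to treat the centre $O$. Since $\eps<r<\inj(\mcl_\kappa)$, the exponential map $\exp_O$ is a diffeomorphism from the Euclidean ball of radius $\eps$ in $T_O\mcl_\kappa\cong\R^d$ onto $\bb_\eps^{\mcl_\kappa}(O)$. In polar coordinates the Riemannian measure of a space of constant curvature $\kappa$ pulls back to $\mathrm{sn}_\kappa(\rho)^{d-1}\dd\rho\dd\theta$. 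Here $\theta$ ranges over the unit sphere $\sph_{d-1}$ with its surface measure $\omega_{d-1}$, and
\begin{align*}
    \mathrm{sn}_\kappa(\rho)=
    \begin{cases}
        \sin(\sqrt{\kappa}\rho)/\sqrt{\kappa}, & \kappa>0,\\
        \rho, & \kappa=0,\\
        \sinh(\sqrt{-\kappa}\rho)/\sqrt{-\kappa}, & \kappa<0.
    \end{cases}
\end{align*}
This follows from the standard Jacobi field computation for constant curvature (cf.\ \cite[§III.3 and §II.3]{Chavel2006}). Consequently
\begin{align*}
    \sigma(\bb_\eps^{\mcl_\kappa}(O))=\abs{\sph_{d-1}}\int_0^\eps \mathrm{sn}_\kappa(\rho)^{d-1}\dd\rho .
\end{align*}

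The key step is a two-sided comparison $a\rho\le \mathrm{sn}_\kappa(\rho)\le b\rho$ valid for all $\rho\in[0,r]$, with constants $a,b>0$ depending only on $\kappa$ and $r$.
\begin{enumerate}
    \item For $\kappa=0$ we may take $a=b=1$.
    \item For $\kappa<0$ we use $\sinh x\ge x$ to get $a=1$. Since $\sinh(x)/x$ is increasing, we may take $b=\sinh(\sqrt{-\kappa}r)/(\sqrt{-\kappa}r)$.
    \item For $\kappa>0$ we use $\sin x\le x$ to get $b=1$. Since $\sin(x)/x$ is decreasing and positive on $(0,\pi)$, and $\sqrt\kappa r<\pi$ by \eqref{eq_modelSpace_injRadius}, we may take $a=\sin(\sqrt\kappa r)/(\sqrt\kappa r)>0$.
\end{enumerate}
Integrating these bounds gives
\begin{align*}
    \frac{\abs{\sph_{d-1}}a^{d-1}}{d}\,\eps^d\le\sigma(\bb_\eps^{\mcl_\kappa}(O))\le\frac{\abs{\sph_{d-1}}b^{d-1}}{d}\,\eps^d ,
\end{align*}
which is the claim with $c=\abs{\sph_{d-1}}a^{d-1}/d$ and $C=\abs{\sph_{d-1}}b^{d-1}/d$.

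The main obstacle is justifying the polar-coordinate form of the volume element. If one prefers to avoid the Jacobi field argument, it can be checked directly in the embedded models. For $\sph_d(1/\sqrt\kappa)$ and $\hyp_d(1/\sqrt{-\kappa})$, parametrize a point at geodesic distance $\rho$ from $O$ in direction $\theta$ as
\begin{align*}
    \exp_O(\rho\theta)=\cos(\sqrt\kappa\rho)\,O+\mathrm{sn}_\kappa(\rho)\,\theta
\end{align*}
for the sphere, with $\cosh(\sqrt{-\kappa}\rho)$ in place of $\cos(\sqrt\kappa\rho)$ for the hyperboloid. Computing the induced metric of this parametrization gives $\dd\rho^2+\mathrm{sn}_\kappa(\rho)^2\dd\theta^2$, whose volume density is $\mathrm{sn}_\kappa(\rho)^{d-1}$.

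The only other delicate point is the positivity of $a$ for $\kappa>0$. This uses the strict inequality $r<\inj(\mcl_\kappa)=\pi/\sqrt\kappa$, which is why the constants must depend on $r$.
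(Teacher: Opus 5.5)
Your proof is correct and follows essentially the paper's route. The paper simply cites the explicit polar-coordinate formula $\sigma(\bb_\eps^{\mcl_\kappa}(O))=\abs{\sph_{d-1}}\int_0^\eps \mathrm{sn}_\kappa(\rho)^{d-1}\dd\rho$ from \cite[Eq.\ III.4.1 and Eq.\ II.5.8]{Chavel2006}; you additionally write out the elementary bounds $a\rho\le\mathrm{sn}_\kappa(\rho)\le b\rho$ on $[0,r]$, correctly using $r<\pi/\sqrt{\kappa}$ to get $a>0$ when $\kappa>0$.
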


The lemma can be inferred from the explicit formulas for $\sigma(\bb_\eps^{\mcl_\kappa}(O))$ given in  \cite[Eq.\ III.4.1]{Chavel2006} together with \cite[Eq.\ II.5.8]{Chavel2006}.

\begin{proof}[Proof of Lemma~\ref{lem_ballInequalityImpliesSurfaceDensity}]
    We show that for all null sets $D\subseteq A$ of the Riemannian measure $\sigma$ we have that $\prob{X\in D} = 0$. 
    Recall the definition of the Hausdorff-measure of dimension $s>0$ (cf.\ \cite[Sec.\ IX.3]{AmannEscher2009book}, \cite[Sec.\ 1.5.2]{Ritore2023}, \cite[Sec.\ §III.5]{Chavel2006}): For $E\subseteq\mcl$ define
    \begin{align*}
        \hcl_\eps^s (E) := \inf\left\{ \sum_{i=1}^\infty (\diam B_i)^s : E \subseteq \bigcup_{i=1}^\infty B_i, ~\diam(B_i) < \eps \text{ and $B_i$ open for all $i=1,2,\ldots$} \right\},
    \end{align*}
    where the diameter of a set in a metric space is defined in the usual sense as $\diam(A):=\sup_{x,y\in A}{d(x,y)}$.
    The $s$-dimensional Hausdorff measure is then defined by $\hcl^s :=\lim_{\eps\to 0} \hcl_\eps^s$. On $\mcl$ the Hausdorff measure of dimension $d$ is up to a constant identical to the Riemannian measure (cf.\ \cite[Sec.\ 1.5.2]{Ritore2023} or \cite[Sec.\ IV]{Chavel2001}), i.e.\ $\mathcal{H}^d = \alpha_{\mcl} ~ \sigma$.
    
    Since $D$ is a null set for the measure $\sigma$ it is also a null set for the $d$-dimensional Hausdorff measure. Thus, for any $\delta>0$ there exists a cover
    \begin{align*}
        D \subseteq \bigcup_{i=1}^\infty B_i, \quad \text{such that}\quad \sum_{i=1}^\infty (\diam B_i)^d < \frac{\delta}{c_A} \quad \text{with}\quad \diam(B_i) < \eps_A \text{ for all $i=1,\ldots$},
    \end{align*}
    where $c_A>0$ and $\eps_A>0$ are given by the assumption.
    Set $\eps_i:=\diam(B_i)$ and choose arbitrary $\eta_i\in B_i\cap D$ for every $i=1,\ldots$. This is possible, since if there is $j\geq 0$ with $B_j\cap D=\emptyset$ we can just choose the cover not containing $B_j$ without loss of generality. Then it still holds that $D\subseteq \bigcup_{i=1}^\infty \bb_{\eps_i}(\eta_i)$, since the radii were defined so that $B_i\subseteq \bb_{\eps_i}(\eta_i)$ for all $i=1,\ldots$.
    Therefore, using the assumption on $X$ we calculate that
    \begin{align*}
        \prob{X\in D} \leq \prob{X \in \bigcup_{i=1}^\infty \bb_{\eps_i}(\eta_i)}
        \leq \sum_{i=1}^\infty \prob{X\in \bb_{\eps_i}(\eta_i)}
        \leq \sum_{i=1}^\infty c_A ~ \eps_i^d 
        = c_A \sum_{i=1}^\infty (\diam B_i)^d< \delta.
    \end{align*}
    Since $\delta$ was chosen arbitrarily, this shows that $\prob{X\in D} = 0$. 
\end{proof}

\subsection{Local existence of fractional L\'evy fields}

To stay as self-contained as possible we provide the full proof of Lemma~\ref{lem_mainExistence2dFBM} with the exception of one result in Riemannian geometry for which we refer the reader to the literature. We follow the steps of \cite[Sec.\ 2]{VenetThesis2016} to a large degree.

Recall the definition of a Gaussian white noise measure (cf.\ \cite[Sec.\ 1.4.3]{AdlerTaylor2009}, \cite[Def.\ 2.1]{VenetThesis2016}, \cite{ChentsovMorozova1968}).

\begin{defi}\label{def_whiteNoiseMeasure}
    Let $(E, \ecl, \nu)$ be a measure space and let $\mathcal{E}_\nu$ denote all sets in $\mathcal{E}$ with finite $\nu$-measure. A random field $\wcl:\ecl_\nu\to\R$, such that for any $A,B\in\ecl_\nu$ we have
    \begin{enumerate}
        \item $\wcl(A)\sim\ncl(0, \nu(A))$,
        \item $\wcl(A \cup B) = \wcl(A) + \wcl(B) - \wcl(A\cap B)$ almost surely,
        \item $A\cap B = \emptyset$ implies that $\wcl(A)$ and $\wcl(B)$ are independent,
    \end{enumerate}
    is called Gaussian white noise with control measure $\nu$.
\end{defi}

It is well known that such a measure exists for any measure space (cf.\ \cite[Thm.\ 1.4.3]{AdlerTaylor2009}).

\begin{lem}\label{lem_existWhiteNoise}
    For any measure space $(E, \ecl, \nu)$ there exists a Gaussian white noise with control measure $\nu$ so that $\expec{\wcl(A)\wcl(B)}= \nu(A\cap B)$.
\end{lem}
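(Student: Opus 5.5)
We construct $\wcl$ on a probability space carrying an i.i.d.\ Gaussian sequence, via an isonormal process on $L^2(E,\ecl,\nu)$. First fix a probability space and a separable or non-separable Hilbert space setting. Simplest route: apply Kolmogorov's extension theorem to the index set $\ecl_\nu$, prescribing centred Gaussian finite-dimensional distributions with covariance kernel $K(A,B):=\nu(A\cap B)$.

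\textbf{Step 1: positive semi-definiteness of $K$.} Since $\nu(A\cap B)=\int \ind_A\ind_B \dd\nu=\abrac{\ind_A,\ind_B}_{L^2(\nu)}$ and $\ind_A\in L^2(\nu)$ for $A\in\ecl_\nu$, we get for reals $a_1,\ldots,a_n$ and $A_1,\ldots,A_n\in\ecl_\nu$
\begin{align*}
    \sum_{i,j} a_i a_j \nu(A_i\cap A_j) = \int \Big(\sum_i a_i \ind_{A_i}\Big)^2 \dd\nu \geq 0 .
\end{align*}
Hence every finite family has a well-defined centred Gaussian law $\ncl(0,(K(A_i,A_j))_{i,j})$. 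These laws are consistent under permutation and marginalisation, because marginals of Gaussians are Gaussian with the sub-covariance. Kolmogorov's theorem on $\R^{\ecl_\nu}$ then yields a centred Gaussian field $\wcl$ with $\expec{\wcl(A)\wcl(B)}=\nu(A\cap B)$.

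\textbf{Step 2: verify (a)--(c) of Definition~\ref{def_whiteNoiseMeasure}.}
\begin{enumerate}
    \item Property (a) is immediate, since $\operatorname{Var}\wcl(A)=\nu(A)$.
    \item For (c): if $A\cap B=\emptyset$, then $\wcl(A),\wcl(B)$ are jointly Gaussian with covariance $\nu(\emptyset)=0$, hence independent.
    \item For (b): set $Z:=\wcl(A\cup B)-\wcl(A)-\wcl(B)+\wcl(A\cap B)$. It is centred Gaussian, and by bilinearity
    \begin{align*}
        \expec{Z^2}=\int\big(\ind_{A\cup B}-\ind_A-\ind_B+\ind_{A\cap B}\big)^2\dd\nu = 0,
    \end{align*}
    because the integrand vanishes pointwise. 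So $Z=0$ almost surely.
\end{enumerate}

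\textbf{Main obstacle.} The only subtle point is that Kolmogorov extension needs no topology here, since it holds for arbitrary index sets with $\R$-valued coordinates. The real concern is therefore only the isometry identity $\expec{\wcl(A)\wcl(B)}=\abrac{\ind_A,\ind_B}$, which reduces (b) to a pointwise indicator identity.

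An alternative route, if one prefers to avoid Kolmogorov's theorem, is to take an orthonormal basis of $L^2(\nu)$ (possibly uncountable, using a product of standard Gaussians) and define $\wcl(A):=\sum_k \abrac{\ind_A,e_k}\xi_k$. This gives the same covariance by Parseval.
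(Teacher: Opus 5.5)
Your proof is correct and follows the standard argument behind the result, which the paper does not prove itself but only cites (Adler--Taylor, Thm.~1.4.3): $\nu(A\cap B)=\abrac{\ind_A,\ind_B}_{L^2(\nu)}$ is positive semi-definite, so Kolmogorov's theorem gives the centred Gaussian field, and (a)--(c) follow from the variance, the zero covariance for disjoint sets, and $\expec{Z^2}=0$. Your variance-zero check via the pointwise identity $\ind_{A\cup B}+\ind_{A\cap B}=\ind_A+\ind_B$ also directly covers the paper's general form of (b), where $A$ and $B$ need not be disjoint.
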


Given a connected $2$-dimensional Riemannian manifold $\mcl$ without border and with a point $O$ in it, our goal is to define a measure space $(E, \ecl, \nu)$ so that there is a neighbourhood $U$ around $O$ and a function $\eta\mapsto A_\eta$ mapping points in $U$ to sets in $\ecl_\nu$ so that $\nu(A_\eta \cap A_\zeta) = d(\eta,\zeta)$. Then the Gaussian white noise measure with control measure $\nu$ is the L\'evy Brownian field on $U$. The existence of the fractional L\'evy field will then follow through a different argument, cf.\ Lemma~\ref{lem_BrownianFieldToFractionallevy} below.

Recall from Section~\ref{sec_RiemannianBackground} that the generalisation of straight lines on Riemannian manifolds are called geodesics and that geodesics are uniquely defined by a starting point and a starting direction.

We introduce the notion of \emph{strongly (geodesically) convex} domains (cf.\ \cite[Sec.\ IX.6]{Chavel2006}):
\begin{defi}
    A domain $U$ on a Riemannian manifold $\mcl$ is called \emph{strongly (geodesically) convex} if for any $\eta,\zeta$ there is a unique geodesic between $\eta$ and $\zeta$ contained in $U$ which is length minimising in $U$ and length minimising in the ambient manifold $\mcl$.
\end{defi}

Working in a strongly convex domain removes many problems that occur in unrestricted Riemannian manifolds. Importantly, such a domain always exists around any point as can be inferred from the fact that the injectivity radius in any point is positive (cf.\ \cite[Thm.\ III.2.3]{Chavel2006}) together with the proof of \cite[Thm.\ IX.6.1]{Chavel2006}.

Now we present the appropriate control measure for the Gaussian white noise that will give us the L\'evy Brownian field. The construction can be found in \cite[Sec.\ IV.19.1, IV.19.2 and IV.19.4]{Santaló2004}. For additional technical details, cf.\ \cite[Sec.\ 1 \& 2]{Hermann1960Remarks}, \cite[Sec.\ 2.3, p.39]{Molino1988book} and \cite{Vidal1967}. Measures of this type are called kinematic measures and the resulting integral is sometimes called a \emph{Crofton formula}. Usually they require strong (symmetry) assumptions on the underlying space, which is not required here and which is why the existence of this measure is remarkable.

\begin{lem}\label{lem_santaloMeasure}
    For any interior point $O\in\mcl$ on a $2$-dimensional Riemannian surface there exists a strongly geodesically convex neighbourhood $U$ around $O$ and a measure $\nu_G$ on the $2$-dimensional set of geodesics $G$ in $U$, such that the following holds: If $\Gamma$ is a curve in $U$ with length $L(\Gamma)$ and $N_\Gamma(\gamma)$ denotes the number of intersections of $\Gamma$ with $\gamma\in G$ then we have
    \begin{align*}
        \int_G N_\Gamma(\gamma) \dd \nu_G(\gamma) = L(\Gamma).
    \end{align*}
\end{lem}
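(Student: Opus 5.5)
The plan is to build $\nu_G$ as the measure induced on the space of geodesics by the canonical (Liouville) structure of the unit tangent bundle, and to obtain the Crofton identity from a change-of-variables computation in which curvature drops out. First, fix a strongly geodesically convex neighbourhood $U$ of $O$; one exists by the remarks preceding the statement, via positivity of $\inj(O)$ and \cite[Thm.\ IX.6.1]{Chavel2006}. Shrinking if needed, I take $U$ relatively compact with smooth, strictly convex boundary. Let $SU$ be the unit tangent bundle over $U$ and $X$ the generator of the geodesic flow.

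By strong convexity, every maximal geodesic of $U$ is a single segment, entering and leaving through $\partial U$ once. Hence the space $\tilde G$ of oriented geodesics in $U$ is the quotient of $SU$ by the flow. It is a smooth $2$-manifold, parametrised for instance by the inward-pointing unit vectors at $\partial U$.

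On $SU$ I use the canonical contact form $\alpha_{(x,v)}(\xi) := \abrac{d\pi(\xi), v}$, where $\pi: SU\to U$ is the projection. The standard facts are $\alpha(X)=1$ and $\mathcal{L}_X\alpha = 0$, so $\iota_X d\alpha = 0$ and $\mathcal{L}_X d\alpha=0$. Therefore $d\alpha$ is basic and descends to a $2$-form $\omega$ on $\tilde G$. Since $\alpha\wedge d\alpha$ is the Liouville volume form, $\omega$ is nondegenerate. I then set $\nu_G := \tfrac14\abs{\omega}$ and push it to unoriented geodesics, which halves the constant to $\tfrac12$; which normalisation appears depends only on whether $G$ is taken oriented, and I fix it at the end.

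For the Crofton formula, first take $\Gamma:[0,L]\to U$ smooth and unit speed. Define $F(s,\theta)$ as the oriented geodesic through $\Gamma(s)$ whose direction makes angle $\theta$ with $\Gamma'(s)$. This factors as $(s,\theta)\mapsto (\Gamma(s), R_\theta\Gamma'(s))\in SU$ followed by the quotient map.

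The pullback of $\alpha$ is
\begin{align*}
    \alpha(\partial_s)=\abrac{\Gamma'(s), R_\theta\Gamma'(s)} = \cos\theta, \qquad \alpha(\partial_\theta)=0,
\end{align*}
because $d\pi(\partial_\theta)=0$. Hence the pullback of $d\alpha$ is $d(\cos\theta\, ds) = -\sin\theta\, d\theta\wedge ds$. Notably, neither the curvature of $\mcl$ nor the geodesic curvature of $\Gamma$ enters.

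Apply the area formula to $F$ on $[0,L]\times(0,2\pi)$. A geodesic $\gamma$ has exactly $N_\Gamma(\gamma)$ preimages for each orientation, and tangential intersections ($\sin\theta=0$) form a $\nu_G$-null set by Sard. This gives
\begin{align*}
    \int_{\tilde G} N_\Gamma\, \dd\abs{\omega} = \int_0^L\int_0^{2\pi}\abs{\sin\theta}\dd\theta\dd s = 4L(\Gamma),
\end{align*}
so the normalisation yields $L(\Gamma)$. Piecewise smooth curves follow by additivity over the pieces; the finitely many corner points are only hit by a null set of geodesics.

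I expect the main obstacle to be the global structure of $\tilde G$ rather than the local computation. The quotient must be Hausdorff and a genuine manifold, which is precisely why strong convexity is imposed: it excludes geodesics that re-enter $U$ and rules out non-uniqueness. Measurability of $N_\Gamma$ and the validity of the area formula, including finiteness of $N_\Gamma$ almost everywhere, also need checking. I would handle both by using the explicit chart of $\tilde G$ given by boundary data $(p,\phi)\in\partial U\times(-\pi/2,\pi/2)$, in which $\omega$ equals $\cos\phi\, dp\wedge d\phi$ by the same computation applied to $\partial U$. This chart shows $\nu_G$ is a finite Radon measure and reduces everything to the Euclidean area formula. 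This is the route in \cite[Sec.\ IV.19]{Santaló2004} and \cite{Hermann1960Remarks}, to which I would defer for the foliation-theoretic details.
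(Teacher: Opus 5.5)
Your proposal is correct and is essentially the construction the paper relies on. The paper gives no proof of its own but defers to Santal\'o (Sec.~IV.19) and Hermann, where the density for geodesics is likewise the invariant (Liouville) form descending from the unit tangent bundle to the leaf space of the geodesic-flow foliation, and Crofton's formula follows from $dG=\abs{\sin\theta}\,ds\,d\theta$ along the curve. The only thing to tidy is the normalisation bookkeeping: the push-forward of $\tfrac14\abs{\omega}$ under the two-to-one map to unoriented geodesics already gives $L(\Gamma)$, whereas the constant $\tfrac12$ belongs to the local density $\abs{\omega}$ read on unoriented geodesics. Either way this is a harmless choice of constant.
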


In strongly convex domains $U$, for any two points $\eta,\zeta\in U$ there is exactly one geodesic $\gamma_{\eta\zeta}$ entirely contained within $U$ passing through these points. This geodesic is length minimising in $U$ and in the ambient manifold. We write $\eta\zeta$ as shorthand notation for the curve defined by this unique geodesic between $\eta$ and $\zeta$ so that the length of the curve is precisely $d(\eta,\zeta)$. Furthermore, we define $\ol{\eta\zeta}$ to be the set of geodesics in $U$ intersecting $\eta\zeta$, but excluding $\gamma_{\eta\zeta}$. The mapping $\eta\mapsto\ol{O\eta}$ is the connection between points on the manifold and sets that we can measure with the control measure. We obtain the following corollary.

\begin{cor}\label{cor_santaloMeasureSpecific}
    Let the setup be the same as in Lemma~\ref{lem_santaloMeasure}. If $\Gamma = \eta\zeta$ for some $\eta,\zeta\in U$ then
    \begin{align*}
        \nu_G(\ol{\eta\zeta}) = d(\eta,\zeta)
    \end{align*}
\end{cor}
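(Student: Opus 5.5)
The plan is to apply the Crofton-type identity of Lemma~\ref{lem_santaloMeasure} to the curve $\Gamma = \eta\zeta$, the unique minimising geodesic segment in the strongly convex neighbourhood $U$. Its length is $L(\eta\zeta) = d(\eta,\zeta)$, because strong convexity makes this segment length minimising in $\mcl$ itself. Lemma~\ref{lem_santaloMeasure} then gives
\begin{align*}
    \int_G N_{\eta\zeta}(\gamma) \dd \nu_G(\gamma) = d(\eta,\zeta).
\end{align*}
It remains to identify the left-hand side with $\nu_G(\ol{\eta\zeta})$. For this we show that $N_{\eta\zeta}(\gamma) = \ind_{\ol{\eta\zeta}}(\gamma)$ for $\nu_G$-almost every $\gamma \in G$.

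This identification splits into three cases. First, if $\gamma$ does not meet $\eta\zeta$, then $N_{\eta\zeta}(\gamma)=0$ and $\gamma\notin\ol{\eta\zeta}$, so the two sides agree. Second, suppose $\gamma \neq \gamma_{\eta\zeta}$ meets $\eta\zeta$. If it met $\eta\zeta$ in two distinct points $\xi_1,\xi_2$, then $\gamma$ and $\gamma_{\eta\zeta}$ would be two different geodesics in $U$ through $\xi_1$ and $\xi_2$. This contradicts the uniqueness part of strong convexity. Hence $N_{\eta\zeta}(\gamma)=1 = \ind_{\ol{\eta\zeta}}(\gamma)$. Third, the only remaining geodesic is $\gamma_{\eta\zeta}$ itself, which is excluded from $\ol{\eta\zeta}$ but has infinitely many intersection points with $\eta\zeta$. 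So we must show that $\nu_G(\{\gamma_{\eta\zeta}\})=0$.

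This last point is the one subtle step. I would argue it from Lemma~\ref{lem_santaloMeasure} itself. Take a short geodesic segment $\Gamma'$ transversal to $\gamma_{\eta\zeta}$, crossing it once, of length $\ell$. Every geodesic other than $\gamma_{\eta\zeta}$ meets $\Gamma'$ at most once, again by strong convexity, while $\gamma_{\eta\zeta}$ itself meets it exactly once. Hence
\begin{align*}
    \nu_G(\{\gamma_{\eta\zeta}\}) \leq \int_G N_{\Gamma'} \dd\nu_G = \ell .
\end{align*}
Letting $\ell \to 0$ gives $\nu_G(\{\gamma_{\eta\zeta}\})=0$. Alternatively, one can note that the kinematic measure of Lemma~\ref{lem_santaloMeasure} has a smooth density on the $2$-dimensional manifold $G$, so it charges no single point.

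With the null set removed, integrating the indicator gives $\nu_G(\ol{\eta\zeta}) = d(\eta,\zeta)$. Measurability of $\ol{\eta\zeta}$ is implicit in Lemma~\ref{lem_santaloMeasure}, since $N_\Gamma$ is assumed integrable.
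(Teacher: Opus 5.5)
Your proposal is correct and follows the paper's proof: apply Lemma~\ref{lem_santaloMeasure} to $\Gamma=\eta\zeta$, use strong convexity to replace $N_\Gamma$ by the indicator of $\ol{\eta\zeta}$ away from the single geodesic $\gamma_{\eta\zeta}$, and show $\nu_G(\{\gamma_{\eta\zeta}\})=0$ because any curve meeting $\gamma_{\eta\zeta}$ has length at least that mass (your shrinking segment $\Gamma'$ makes the paper's one-line version of this step explicit). One small slip: the geodesic carrying $\Gamma'$ meets $\Gamma'$ infinitely often, so ``every other geodesic meets $\Gamma'$ at most once'' is false as stated, but your bound only uses $N_{\Gamma'}\geq 0$ and $N_{\Gamma'}(\gamma_{\eta\zeta})\geq 1$, so nothing is lost.
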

\begin{proof}
    Let $\gamma_{\eta\zeta}$ be the unique geodesic passing through $\eta$ and $\zeta$ in $U$. Then $\nu_G(\gamma_{\eta\zeta})$ must be equal to zero, since otherwise Lemma~\ref{lem_santaloMeasure} would imply that any curve passing through $\eta$ had length of at least $\nu_G(\gamma_{\eta\zeta})$, which is nonsense. Due to strong geodesic convexity, the number of intersections that $\Gamma$ has with any $\gamma\in G\setminus\{\gamma_{\eta\zeta}\}$ is at most one. Thus, we may replace $N_\Gamma(\gamma)$ by an indicator function and obtain that
    \begin{align*}
        \nu_G(\ol{\eta\zeta}) + 0 = \int_{G\setminus\{\gamma_{\eta\zeta}\}} \ind_{\Gamma\cap\gamma\neq\emptyset} \dd \nu_G(\gamma) + \nu_G(\gamma_{\eta\zeta}) = \int_{G} N_\Gamma(\gamma) \dd \nu_G(\gamma) = L(\Gamma) = d(\eta,\zeta),
    \end{align*}
    since $\Gamma$ is length minimising by strong geodesic convexity and since $\int_{\gamma_{\eta\zeta}} N_\Gamma(\gamma)\dd \nu_G(\gamma) = 0$ despite $N_\Gamma(\gamma_{\eta\zeta})=\infty$, because $\nu_G(\gamma_{\eta\zeta})=0$.
\end{proof}

A geodesic triangle $(\eta,\zeta,\xi)$ in a strongly convex domain $U$ consists of the three geodesic edges $\eta\zeta, \zeta\xi, \xi\eta$ connecting the given three distinct vertex points. The following lemma shows that the intersection of triangles and geodesics behaves the same way as in the Euclidean case. The statement in Euclidean space is sometimes referred to as ``Pasch's axiom''.

\begin{lem}\label{lem_paschsAxiom}
    In a strongly convex domain $U$ every geodesic line that intersects a geodesic triangle $(\eta, \zeta, \xi)$ in a non-vertex point intersects the triangle exactly twice.
\end{lem}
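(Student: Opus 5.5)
The plan is to prove Pasch's axiom (Lemma~\ref{lem_paschsAxiom}) by reducing it, via the topology of the strongly convex domain, to a separation argument. The geodesic $\gamma$ in question is a maximal geodesic segment in $U$. By strong convexity it meets each of the three edges $\eta\zeta$, $\zeta\xi$, $\xi\eta$ at most once, unless it contains that edge. Containment is impossible here: $\gamma$ would then be the geodesic through two vertices and hit the triangle at a vertex. A geodesic through a non-vertex point of an edge that contained the whole edge would coincide with $\gamma_{\eta\zeta}$, and this case is excluded or degenerate. So the total number of intersection points is at most three. The proof then has two parts: an upper bound showing that at most two intersections occur, and a lower bound showing that at least two occur.

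For the lower bound I would use a Jordan-curve argument. The triangle is a simple closed curve in $U$, which is homeomorphic to a disc since it lies in a strongly convex domain inside the injectivity radius, via $\exp_O^{-1}$. Suppose $\gamma$ crosses an edge transversally at the non-vertex point $p$. Tangency is impossible, since two geodesics tangent at a point coincide. Then points of $\gamma$ just before and just after $p$ lie in different components of the complement of the triangle. The geodesic $\gamma$ is a curve whose two ends leave every compact subset of $U$: it is maximal in $U$, and by strong convexity it cannot stay in a compact set without being extendable. Hence both of its ends eventually lie in the unbounded (exterior) component. Therefore $\gamma$ must cross the triangle at least once more, giving at least two intersections.

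For the upper bound I would suppose $\gamma$ meets all three edges, in points $p_1,p_2,p_3$, and order them along $\gamma$ with $p_2$ in the middle. Say $p_1\in\eta\zeta$ and $p_3\in\zeta\xi$ with $p_2\in\xi\eta$. Consider the geodesic triangle $(\zeta,p_1,p_3)$. Its sub-edges lie on $\eta\zeta$ and $\zeta\xi$, and its third side $p_1p_3$ is exactly the segment of $\gamma$ between $p_1$ and $p_3$, by uniqueness of geodesics in $U$. Now $\eta$ and $\xi$ lie outside the triangle $(\zeta,p_1,p_3)$, which is convex because the intersection of geodesically convex sets in $U$ is convex. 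Hence the segment $\eta\xi$ meets the side $p_1p_3$ at a point $p_2$ while both its endpoints are exterior. This contradicts the fact that two distinct geodesics meet at most once: the segment $\eta\xi$ enters the convex triangle region at most through two boundary points, and the geodesic continuation forces a second meeting with $\gamma$. The case where $\gamma$ passes through a vertex is handled by counting that vertex as the single intersection with both adjacent edges.

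I expect the main obstacle to be the upper bound. In particular I need to make rigorous that the closed region bounded by a geodesic triangle is geodesically convex and that geodesics from outside cannot cross a side in a single point. I would first try to transfer the whole configuration to $\R^2$ via a chart in which geodesics through $U$ form a family of curves any two of which meet at most once (a pseudoline arrangement), since Pasch's axiom holds for such arrangements by the classical argument.
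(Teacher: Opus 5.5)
Your lower bound (transversality at a non-vertex point, Jordan separation in the disc $U$, and the fact that a maximal geodesic of $U$ cannot remain in the compact closed interior) matches the paper's argument. So does your observation that $\gamma$ meets each edge at most once.

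The gap is in the upper bound, i.e.\ in excluding the configuration with one crossing on each of the three edges. There are three problems with that step as written:
\begin{enumerate}
\item You assert that the region bounded by $(\zeta,p_1,p_3)$ is geodesically convex because it is an intersection of convex sets. That presupposes that every geodesic cuts $U$ into two convex half-planes, which is essentially the separation content of Pasch's axiom itself.
\item You never show that $\eta$ and $\xi$ lie outside this sub-triangle, and convexity alone does not give it.
\item Your final contradiction only excludes a second meeting of $\eta\xi$ with $\gamma$. It says nothing about a meeting with the sides $\zeta p_1$ and $\zeta p_3$.
\end{enumerate}
So this step does not go through. The pseudoline reduction you mention as a fallback would be one more unproved step.

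The missing idea is a parity count, which your own lower-bound setup already supplies and which is what the paper uses. Every intersection point is a transversal crossing at a non-vertex point of the Jordan curve, so it switches $\gamma$ between interior and exterior. On either side, the part of $\gamma$ beyond its last intersection is connected, misses the triangle, and cannot stay in the compact closed interior, so both ends of $\gamma$ lie in the exterior. Hence the number of crossings is even. It is at least one and at most three (one per edge), so it equals two.

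Your sub-triangle route can also be repaired without any convexity. The segment $\eta\xi$ meets $(\zeta,p_1,p_3)$ only at $p_2$, and it crosses there transversally. By the same separation argument, one of $\eta,\xi$ then lies in the interior of the sub-triangle. On the other hand, the maximal continuations in $U$ of $\zeta\eta$ beyond $p_1$ and of $\zeta\xi$ beyond $p_3$ avoid the sub-triangle and cannot stay in its closed interior. This puts both $\eta$ and $\xi$ in the exterior, a contradiction.
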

\begin{proof}
    A geodesic is completely determined by its position and its direction. Let $\gamma$ be a geodesic that intersects an edge of the triangle at least once with no intersection in any of the vertex points.

    The intersection cannot be tangential, since this would imply that the geodesic follows the same direction in the same point as that edge of the triangle. Thus, $\gamma$ would contain that edge and therefore also one of the vertex points, which is a contradiction.

    The triangle is compact and defines an interior and an exterior by diffeomorphism to the Euclidean plane.
    
    Suppose that $\gamma$ and the triangle intersected only at the single non-vertex point $\xi$: Since the intersection is not tangential there is a part of $\gamma$ inside the interior of the triangle. This would imply that we could extend $\gamma$ infinitely in the interior of the triangle without ever hitting it. However, $\gamma$ cannot be of arbitrary length: The triangle together with its interior is a compact domain and the distance $d(\xi,.)$ is a continuous function. Thus, this distance is bounded on compact domains. Since $\gamma$ travels with a constant non-zero velocity vector by definition, cf.\ Section~\ref{sec_RiemannianBackground}, its length scales linearly with the time travelled, cf.\ \eqref{eq_curveLengthDef}. However, its length at time $t$ is exactly the distance $d(\xi,\gamma(t))$ because of the strong convexity of the domain. Thus, if $\gamma$ were to travel infinitely long, then the distance of $\xi$ to a point inside the triangle would be unbounded, which cannot be as we have already established by compactness. Therefore, it must hit another edge of the triangle in finite time.

    Suppose now that $\gamma$ and the triangle intersected in more than two points: By repeating the same arguments as before, we know that the intersection must occur an even number of times. Now, if there were to be more than exactly two points of intersection then two of those intersections would have to lie on the exact same edge of the triangle. However, this would imply that $\gamma$ is the shortest path between two points on the same side of the triangle. Thus, $\gamma$ would be identical to the geodesic that defines this particular side of the triangle, which cannot be true, since we assumed that $\gamma$ does not intersect any of the vertices.

    Therefore, every geodesic $\gamma$ which spends time in the interior of the triangle, must intersect the triangle exactly twice.
\end{proof}

The previous lemma automatically implies the following, which is derived from \cite[Lem.\ 2.2]{VenetThesis2016}. Note that for sets $A,B$ the \textit{symmetric difference} $\Delta$ is defined as $A\Delta B := A\setminus B \cup B\setminus A$.

\begin{cor}\label{cor_symmetricDiff}
    For any three distinct points $\eta,\zeta,\xi\in U$, where $U$ is a strongly geodesically convex neighbourhood, we have that
    \begin{align*}
        \ol{\eta\zeta} = \ol{\eta\xi} ~\Delta ~\ol{\zeta\xi}.
    \end{align*}
\end{cor}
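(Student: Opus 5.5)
The plan is to fix an arbitrary geodesic $\gamma\in G$ and check that it lies in $\ol{\eta\zeta}$ if and only if it lies in exactly one of $\ol{\eta\xi}$ and $\ol{\zeta\xi}$. This is done by a case analysis according to how $\gamma$ meets the three edges $\eta\zeta$, $\eta\xi$, $\zeta\xi$. Throughout I work in the strongly convex domain $U$, so each pair of points is joined by a unique minimising geodesic segment. A geodesic that is not the carrier of a segment meets that segment in at most one point, as used in the proof of Corollary~\ref{cor_santaloMeasureSpecific}.

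\emph{Degenerate configuration.} First suppose the three points lie on one geodesic, say with $\xi$ between $\eta$ and $\zeta$ (the other orderings are analogous). Then $\eta\zeta=\eta\xi\cup\xi\zeta$, and all three sets exclude the same carrier geodesic. A geodesic $\gamma$ different from the carrier meets $\eta\zeta$ in at most one point, which lies in exactly one of the two sub-segments unless it is $\xi$. So the claimed equality holds for every $\gamma$ not passing through $\xi$.

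\emph{Non-degenerate configuration.} Otherwise $(\eta,\zeta,\xi)$ is a geodesic triangle, and I split into three cases.

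\begin{enumerate}
\item $\gamma$ passes through no vertex. By Lemma~\ref{lem_paschsAxiom}, $\gamma$ either misses the triangle or meets it in exactly two points. These points lie on two distinct edges, since two intersections with one edge would force $\gamma$ to be that edge's carrier. So $\gamma$ belongs to exactly zero or exactly two of the three sets, which is precisely $\gamma\in\ol{\eta\zeta}\iff\gamma\in\ol{\eta\xi}\,\Delta\,\ol{\zeta\xi}$.
\item $\gamma$ is one of the three edge carriers. For instance, $\gamma_{\eta\zeta}$ is excluded from $\ol{\eta\zeta}$ but meets $\eta\xi$ at $\eta$ and $\zeta\xi$ at $\zeta$. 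It is neither of those carriers by non-collinearity, so it lies in both sets and hence not in their symmetric difference. The carriers $\gamma_{\eta\xi}$ and $\gamma_{\zeta\xi}$ are checked the same way and also give consistent answers.
\item $\gamma$ passes through exactly one vertex and is not an edge carrier. This is the remaining case.
\end{enumerate}

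\emph{Main obstacle.} The third case is where I expect the difficulty. Take $\gamma$ through $\eta$ that crosses into the triangle. The argument in the proof of Lemma~\ref{lem_paschsAxiom} (the geodesic cannot stay inside a compact region forever) shows it exits through an interior point of $\zeta\xi$. It then lies in all three sets, and the literal set identity fails for this $\gamma$. Therefore I would first try to make the statement exact by reading the segment $\eta\zeta$ in the definition of $\ol{\eta\zeta}$ in the half-open or ``generic'' sense, so that geodesics through the endpoints are not counted. With that convention, cases (a) and (b) cover everything and the identity is an exact set equality.

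If the definition must be kept literally as in the paper, the fallback is to show that the exceptional family, namely geodesics through one of the finitely many points $\eta,\zeta,\xi$, is $\nu_G$-null. This follows as in the proof of Corollary~\ref{cor_santaloMeasureSpecific}. The geodesics through a point $p$ all meet every short curve through $p$, so by Lemma~\ref{lem_santaloMeasure} their measure is at most the length of such a curve, which can be made arbitrarily small. The identity then holds off this null family, which is all that is used for the white-noise covariance computation in which the corollary is applied.
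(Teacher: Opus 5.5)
You are right that the statement is false as a literal set identity, and your null-set fallback is the correct repair. Your first repair, however, does not work.

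\textbf{The failure of the statement.} Your cases (a) and (b) are exactly what the paper's one-line justification amounts to: Lemma~\ref{lem_paschsAxiom} plus the exclusion of the carrier $\gamma_{\eta\zeta}$. The paper's derivation overlooks your case (c). It also overlooks the collinear configuration, where Lemma~\ref{lem_paschsAxiom} does not even apply. For a concrete counterexample, take a Euclidean disc $U$ containing $\eta=(0,0)$, $\zeta=(1,0)$, $\xi=(0,1)$. The line $\gamma=\{y=x\}$ is none of the three carriers. It meets $\eta\zeta$ and $\eta\xi$ at $\eta$, and meets $\zeta\xi$ at $(\tfrac12,\tfrac12)$. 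Hence $\gamma\in\ol{\eta\zeta}$, but also $\gamma\in\ol{\eta\xi}\cap\ol{\zeta\xi}$, so $\gamma\notin\ol{\eta\xi}\,\Delta\,\ol{\zeta\xi}$.

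\textbf{Why the fallback suffices.} The paper uses the corollary only through $\nu_G$-measures, in \eqref{eq_nuGSymmetricDifference} and the covariance computation after it. Let $G_p$ be the pencil of geodesics through $p$. For every curve $\Gamma\subseteq U$ through $p$, Lemma~\ref{lem_santaloMeasure} gives $\nu_G(G_p)\le L(\Gamma)$. Hence $\nu_G(G_p)=0$, granting measurability, which the paper tacitly assumes for all these sets. Your case analysis shows the identity holds off $G_\eta\cup G_\zeta\cup G_\xi$. This is the same device the paper uses to get $\nu_G(\gamma_{\eta\zeta})=0$ in Corollary~\ref{cor_santaloMeasureSpecific}.

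\textbf{What does not work.} Reading segments as open, or discarding from $\ol{\eta\zeta}$ the geodesics through $\eta$ or $\zeta$, does not remove case (c). The reason is that the discarding depends on the pair. In the example, $\gamma$ drops out of $\ol{\eta\zeta}$ and $\ol{\eta\xi}$, but it still meets the relative interior of $\zeta\xi$. So now $\gamma\in\ol{\eta\xi}\,\Delta\,\ol{\zeta\xi}$ while $\gamma\notin\ol{\eta\zeta}$. Your claim that under this convention cases (a) and (b) cover everything is therefore false.

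\textbf{How to get an exact identity.} Replace \emph{$\gamma$ meets the segment} by \emph{$\gamma$ separates the endpoints}, with a tie-break. For each geodesic $\gamma$, fix one closed side $S_\gamma$; this uses that each geodesic splits $U$ into two sides. Put $\gamma\in\ol{\eta\zeta}$ iff exactly one of $\eta,\zeta$ lies in $S_\gamma$. Then membership in $\ol{\eta\zeta}$, $\ol{\eta\xi}$, $\ol{\zeta\xi}$ is $a\oplus b$, $a\oplus c$, $b\oplus c$ for some $a,b,c\in\{0,1\}$, and the identity is exact. These sets differ from the paper's only on $G_\eta\cup G_\zeta$, so Corollary~\ref{cor_santaloMeasureSpecific} is unaffected. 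Otherwise, state the corollary modulo a $\nu_G$-null set and keep only your fallback.
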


Note that the identity is correct, since for any $\eta,\zeta\in U$ we excluded the unique geodesic $\gamma_{\eta\zeta}$ passing through $\eta$ and $\zeta$.

We have now collected all the ingredients necessary for the existence proof of the local existence of the L\'evy Brownian field. This is analogous to \cite[Thm.\ 2.1]{VenetThesis2016}.

\begin{lem}\label{lem_existenceLévyBrownianField2D}
    Let $\mcl$ be a $2$-dimensional connected Riemannian manifold without boundary and with $O\in\mcl$ in its interior. Then there exists a neighbourhood $U$ around $O$, on which the L\'evy Brownian field exists.
\end{lem}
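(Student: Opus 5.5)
We take the strongly geodesically convex neighbourhood $U$ of $O$ and the measure $\nu_G$ on the set $G$ of geodesics in $U$ from Lemma~\ref{lem_santaloMeasure}. Then we build the field from Gaussian white noise. Concretely, consider the measure space $(G,\ecl,\nu_G)$, with $\ecl$ the $\sigma$-algebra on which $\nu_G$ is defined. By Lemma~\ref{lem_existWhiteNoise} there is a Gaussian white noise $\wcl$ with control measure $\nu_G$ and covariance $\expec{\wcl(A)\wcl(B)}=\nu_G(A\cap B)$. For $\eta\in U$ we set $A_\eta:=\ol{O\eta}$ and $A_O:=\emptyset$, and define $X(\eta):=\wcl(A_\eta)$. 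Corollary~\ref{cor_santaloMeasureSpecific} gives $\nu_G(A_\eta)=d(O,\eta)<\infty$, so each $A_\eta\in\ecl_{\nu_G}$. Consequently $X$ is a centred Gaussian field with $X(O)=0$.

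The key step is computing the structure function. For $\eta,\zeta\in U$ we have
\begin{align*}
\expec{(X(\eta)-X(\zeta))^2}=\nu_G(A_\eta)+\nu_G(A_\zeta)-2\nu_G(A_\eta\cap A_\zeta)=\nu_G(A_\eta\,\Delta\,A_\zeta).
\end{align*}
When $O,\eta,\zeta$ are distinct, Corollary~\ref{cor_symmetricDiff} applied with $\xi:=O$ gives $A_\eta\Delta A_\zeta=\ol{\eta\zeta}$. Corollary~\ref{cor_santaloMeasureSpecific} then yields $\nu_G(\ol{\eta\zeta})=d(\eta,\zeta)$. The cases where two of the points coincide are immediate from $A_O=\emptyset$ and Corollary~\ref{cor_santaloMeasureSpecific}.

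One subtlety remains: $O,\eta,\zeta$ may be collinear, i.e.\ lie on one geodesic, so that the "triangle" is degenerate. Corollary~\ref{cor_symmetricDiff} is stated for distinct points, but if it secretly relies on Lemma~\ref{lem_paschsAxiom} for genuine triangles, I would handle this case directly. Say $\eta$ lies between $O$ and $\zeta$. Then a geodesic other than $\gamma_{O\zeta}$ meets $O\zeta$ at most once, so it meets exactly one of $O\eta$ and $\eta\zeta$, up to the $\nu_G$-null set of geodesics through $\eta$. Here I would argue that geodesics through a fixed point form a $\nu_G$-null set by applying Lemma~\ref{lem_santaloMeasure} to curves of vanishing length around the point. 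This gives $\nu_G(A_\eta\Delta A_\zeta)=\nu_G(\ol{\eta\zeta})$ up to null sets. The identity therefore holds in all cases. Together with the white-noise properties, which ensure we have a consistent Gaussian family, this shows that $X$ is the L\'evy Brownian field ($H=1/2$) on $U$.

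Finally, I would argue continuity: a continuous modification exists by Lemma~\ref{lem_MetricSupEstimate}, using the covering bound $N_X(x)\le c\,x^{-2d}$ on relatively compact subsets as in the proof of Lemma~\ref{lem_epsBallSupEstimate}. Possibly we need to shrink $U$ to be relatively compact first. I expect the main obstacle to be the measure-theoretic care about $\nu_G$-null sets, namely geodesics through vertices and the excluded geodesic $\gamma_{\eta\zeta}$. These null sets are what justify the symmetric-difference identity in the degenerate configurations.
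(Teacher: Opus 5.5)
Your proposal is correct and is essentially the paper's proof. You use the same strongly convex $U$ with $\nu_G$, the same white noise $\wcl$, the same field $X_\eta=\wcl(\ol{O\eta})$ and the same two corollaries. The only differences are that you compute $\expec{(X_\eta-X_\zeta)^2}=\nu_G(\ol{O\eta}\,\Delta\,\ol{O\zeta})$ directly instead of the covariance, and that you add a continuity step the paper omits here.

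Your observation that the geodesics through a fixed point form a $\nu_G$-null set is a worthwhile addition. It is needed beyond the collinear case: a geodesic through a vertex that crosses the interior of the triangle lies in $\ol{\eta\zeta}$ but not in $\ol{O\eta}\,\Delta\,\ol{O\zeta}$. So Corollary~\ref{cor_symmetricDiff} in fact only holds up to such null sets.
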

\begin{proof}
    We choose $U$ as the strongly geodesically convex domain, so that the measure $\nu_G$ from Lemma~\ref{lem_santaloMeasure} exists. Then by Lemma~\ref{lem_existWhiteNoise} there is a Gaussian white noise $\wcl$ with control measure $\nu_G$, such that we can define
    \begin{align*}
        X_\eta := \wcl(\ol{O\eta})
    \end{align*}
    for any $\eta\in U$. Then $(X_\eta)_{\eta\in U}$ is a centred Gaussian process. Note that
    \begin{align}\label{eq_nuGSymmetricDifference}
        \nu_G(\ol{O\eta}) + \nu_G(\ol{O\zeta}) - \nu_G(\ol{O\eta}~ \Delta ~\ol{O\zeta}) = 2\nu_G(\ol{O\eta}\cap\ol{O\zeta}).
    \end{align}
    
    Thus, we can calculate the covariance of $(X_\eta)_{\eta\in U}$ for $\eta,\zeta\in U$ using Lemma~\ref{lem_existWhiteNoise}, \eqref{eq_nuGSymmetricDifference} and Corollary~\ref{cor_symmetricDiff} through
    \begin{align*}
        \expec{X_\eta X_\zeta} 
        & = \expec{\wcl(\ol{O\eta}) ~ \wcl(\ol{O\zeta})}
        = \nu_G(\ol{O\eta} \cap \ol{O\zeta}) \\
        & = \frac{1}{2}\left( \nu_G(\ol{O\eta}) + \nu_G(\ol{O\zeta}) - \nu_G(\ol{O\eta}~\Delta ~\ol{O\zeta}) \right) \\
        & = \frac{1}{2}\left( \nu_G(\ol{O\eta}) + \nu_G(\ol{O\zeta}) - \nu_G(\ol{\eta\zeta}) \right)\\
        & = \frac{1}{2}\left( d(O,\eta) + d(O,\zeta) - d(\eta,\zeta) \right),
    \end{align*}
    where in the last step we applied Corollary~\ref{cor_santaloMeasureSpecific}. A simple rearrangement shows that this covariance defines the process with structure function
    \begin{gather*}
        \expec{(X_\eta-X_\zeta)^2} = d(\eta,\zeta). \qedhere
    \end{gather*}
\end{proof}

It remains to show the existence of the \emph{fractional} L\'evy field for the other Hurst parameters $0<H<1/2$. The following definition (cf.\ \cite[Sec.\ 2.4]{Istas2012}) in combination with the lemma following thereafter will allow for simpler notation.

\begin{defi}\label{def_condNegSemiDef}
    Let $A$ be a set and let $f:A\times A\to \R_{\geq 0}$ be a symmetric function. Then it is called conditionally negative definite, if for any $n\in\N$, any $\lambda_1, \ldots, \lambda_n \in\R$ with $\lambda_1 + \ldots + \lambda_n=0$ and any $x_1, \ldots, x_n \in A$ it holds that
    \begin{align*}
        \sum_{i,j=1}^n \lambda_i \lambda_j f(x_i, x_j) \leq 0.
    \end{align*}
\end{defi}

Now we may state the following version of Schoenberg's theorem (cf.\ \cite[Sec.\ 2.4]{Istas2012}, \cite[Sec.\ C.3]{BekkaEtAl2008book}, \cite{Schoenberg1938}).

\begin{lem}\label{lem_schoenbergsTheorem}
    Let $A$ be a set and let $f:A\times A\to\R_{\geq 0}$ be a symmetric function. Let $O\in A$ be an arbitrary point and let $g:A\times A\to \R_{\geq 0}$ be defined by
    \begin{align*}
        g(x,y)&:= f(O,x) + f(O,y) - f(x,y).
    \end{align*} 
    \begin{enumerate}
        \item $f$ is conditionally negative definite if and only if $g$ is positive definite,
        \item $f$ is conditionally negative definite if and only if for any $t\geq 0$ the function $(x,y)\mapsto \exp(-t f(x,y))$ is positive semi-definite.
    \end{enumerate}
\end{lem}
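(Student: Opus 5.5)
The plan is to prove both parts by direct manipulation of quadratic forms, using part (a) as the tool for part (b). One caveat first. Both the forward direction of (a) and, through it, the forward direction of (b) need the diagonal normalisation $f(O,O)=0$ (in general $f(x,x)=0$). This holds in every application in the paper, since there $f(x,y)=d(x,y)^{2H}$. Without it, (a) fails: adding a large constant $c$ to $f$ keeps $f$ conditionally negative definite, but changes $g$ by $+c$ everywhere except at $x=O$ or $y=O$, where it also enters through $f(O,O)$. I will therefore state and use $f(x,x)=0$ explicitly.

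For (a), the direction "$g$ positive definite $\Rightarrow$ $f$ conditionally negative definite" is immediate. If $\sum_i\lambda_i=0$, the terms $f(O,x_i)$ and $f(O,x_j)$ cancel in $\sum_{i,j}\lambda_i\lambda_j g(x_i,x_j)$, which leaves $-\sum_{i,j}\lambda_i\lambda_j f(x_i,x_j)\ge 0$.

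For the converse, take arbitrary $c_1,\dots,c_n\in\R$ and points $x_1,\dots,x_n$. Adjoin the point $x_0:=O$ with weight $\lambda_0:=-\sum_i c_i$, so that the weights $(\lambda_0,c_1,\dots,c_n)$ sum to zero. Conditional negative definiteness applied to these $n+1$ points, together with $f(O,O)=0$, gives
\begin{align*}
\sum_{i,j\ge1} c_ic_j f(x_i,x_j) - 2\Big(\sum_i c_i\Big)\sum_j c_j f(O,x_j) \le 0 .
\end{align*}
This says exactly that $\sum_{i,j}c_ic_j g(x_i,x_j)\ge 0$.

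For (b), the direction "$\Leftarrow$" goes by differentiating at $t=0$. Let $\sum_i\lambda_i=0$. Since the constant term drops out, $\sum_{i,j}\lambda_i\lambda_j\,(1-e^{-tf(x_i,x_j)})/t = -t^{-1}\sum_{i,j}\lambda_i\lambda_j e^{-tf(x_i,x_j)}\le 0$. Letting $t\searrow0$ yields $\sum_{i,j}\lambda_i\lambda_j f(x_i,x_j)\le 0$.

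For "$\Rightarrow$", fix $t\ge0$. The kernel $tf$ is again conditionally negative definite, so by (a) the kernel $tg$ is positive semi-definite. I then factor
\begin{align*}
e^{-tf(x,y)} = e^{-tf(O,x)}\,e^{-tf(O,y)}\,e^{tg(x,y)} .
\end{align*}
The last factor is positive semi-definite on every finite set of points by the Schur product theorem. Indeed, $e^{tg}=\sum_k t^k g^{\circ k}/k!$ is a convergent series with nonnegative coefficients of Hadamard powers of a positive semi-definite matrix, and limits of positive semi-definite matrices are positive semi-definite. Multiplying entrywise by the rank-one kernel $a(x)a(y)$, with $a(x)=e^{-tf(O,x)}$, amounts to conjugating by a diagonal matrix, which preserves positive semi-definiteness.

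I expect the only real obstacle to be this "$\Rightarrow$" step of (b), specifically the justification of the Schur product theorem for entrywise exponentials. If needed I would include the one-line proof: a Hadamard product of Gram matrices is the Gram matrix of the corresponding tensor products of the vectors. The remaining steps are routine algebra once the diagonal normalisation is in place.
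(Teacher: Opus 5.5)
The paper gives no proof of this lemma; it quotes it from Schoenberg, Bekka--de la Harpe--Valette and Istas. Your argument is the standard one from that literature:
\begin{itemize}
\item cancellation of the $f(O,\cdot)$ terms for one direction of (a);
\item adjoining $O$ with weight $-\sum_i c_i$ for the other direction of (a);
\item the difference quotient at $t=0$ for one direction of (b);
\item the factorisation $e^{-tf(x,y)}=e^{-tf(O,x)}e^{-tf(O,y)}e^{tg(x,y)}$ together with the Schur product theorem for the other direction of (b).
\end{itemize}
All of these steps are correct.

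Your caveat, however, is wrong, and because of it you prove the lemma only under an extra hypothesis that the statement does not contain. Your proposed counterexample does not work. Replacing $f$ by $f+c$ replaces $g$ by $g+c$ at every pair of points, including when $x=O$ or $y=O$: there $g(O,y)=f(O,O)$, which also shifts by $c$. For $c\ge 0$ this shift preserves positive semi-definiteness.

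In fact no normalisation is needed. Keep the diagonal term in your computation with $x_0=O$ and $\lambda_0=-\sum_i c_i$. Conditional negative definiteness then gives
\begin{align*}
    \sum_{i,j\ge 1} c_i c_j\, g(x_i,x_j) \;\ge\; \Big(\sum_{i\ge 1} c_i\Big)^2 f(O,O) \;\ge\; 0,
\end{align*}
since $f$ takes values in $\R_{\geq 0}$. This is exactly what the hypothesis $f\ge 0$ in the statement is for. The only real obstruction would be $f(O,O)<0$ (e.g.\ $f\equiv -1$, which is conditionally negative definite while $g\equiv -1$ is not positive semi-definite), and that case is excluded.

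For (b), the factorisation identity holds without any normalisation, and $tf\ge 0$ for $t\ge 0$, so the corrected version of (a) applies to $tf$. With this one-line change your proof establishes both parts exactly as stated.
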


The idea is to show that if $d(.,.)$ defines a structure function of a Gaussian field then for a class of functions $F(d(.,.))$ will define the structure function of a Gaussian field, as well. We believe that the first instance of the following argument was in \cite[§5]{Gangolli1967}. There, it was used for $F:\R\to\R_{\geq 0}$ with $F(x):=x^\alpha$ for $0<\alpha\leq 1$. We present a version of \cite[Prop.\ 2.5]{Istas2012} (cf.\ also \cite[Lem.\ 3.1]{Istas2005}).

\begin{lem}\label{lem_BrownianFieldToFractionallevy}
    If there exists a L\'evy Brownian field on a metric space $(U,d)$ then the there exists a fractional L\'evy field for all $0<H< 1/2$ on the same index set.
\end{lem}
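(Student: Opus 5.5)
Write $f(\eta,\zeta):=d(\eta,\zeta)$ on $U$. The plan is to show that $f^{2H}=d^{2H}$ is conditionally negative definite and then build the field from the resulting positive definite kernel via Lemma~\ref{lem_schoenbergsTheorem}.

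\emph{Step 1.} Since a L\'evy Brownian field $(X_\eta)_{\eta\in U}$ exists, $\expec{(X_\eta-X_\zeta)^2}=d(\eta,\zeta)$. For any $\lambda_1,\ldots,\lambda_n$ with $\sum_i\lambda_i=0$, expanding $(X_{x_i}-X_{x_j})^2$ gives
\begin{align*}
\sum_{i,j}\lambda_i\lambda_j d(x_i,x_j)=-2\,\expec{\Big(\sum_i\lambda_i X_{x_i}\Big)^2}\le 0 .
\end{align*}
The cross terms $\expec{X_{x_i}^2}$ vanish because $\sum_j\lambda_j=0$. Hence $d$ is conditionally negative definite.

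\emph{Step 2.} Let $\alpha:=2H\in(0,1)$ and use the Bernstein-type representation
\begin{align*}
x^\alpha=\frac{\alpha}{\Gamma(1-\alpha)}\int_0^\infty (1-e^{-tx})\,t^{-\alpha-1}\dd t,\qquad x\ge 0 .
\end{align*}
By Lemma~\ref{lem_schoenbergsTheorem}(b), each kernel $e^{-t d(\cdot,\cdot)}$ is positive semi-definite. For $\sum_i\lambda_i=0$ the constant term drops out, so
\begin{align*}
\sum_{i,j}\lambda_i\lambda_j(1-e^{-t d(x_i,x_j)})=-\sum_{i,j}\lambda_i\lambda_j e^{-t d(x_i,x_j)}\le 0 .
\end{align*}
Integrating this against the positive measure $t^{-\alpha-1}\dd t$ shows that $d^{2H}$ is conditionally negative definite. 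The integral converges because $1-e^{-tx}\le \min(1,tx)$.

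\emph{Step 3.} By Lemma~\ref{lem_schoenbergsTheorem}(a), the kernel
\begin{align*}
g(\eta,\zeta):=\tfrac12\big(d(O,\eta)^{2H}+d(O,\zeta)^{2H}-d(\eta,\zeta)^{2H}\big)
\end{align*}
is positive semi-definite. Kolmogorov's extension theorem then yields a centred Gaussian field with covariance $g$. It satisfies $X_H(O)=0$ and $\expec{(X_H(\eta)-X_H(\zeta))^2}=d(\eta,\zeta)^{2H}$.

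\emph{Step 4.} For continuity, I would argue locally as in Lemma~\ref{lem_epsBallSupEstimate} (Dudley's bound with the covering estimate) to obtain an a.s.\ continuous modification. This step is only needed if continuity is part of the definition.

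The main obstacle is Step 2. One must keep the integral representation valid, with the correct positive constant, and justify exchanging the finite sum with the integral; linearity over finitely many terms handles the exchange. Alternatively, one can cite the standard fact that $x\mapsto x^\alpha$ is a Bernstein function, so that $F\circ f$ stays conditionally negative definite.
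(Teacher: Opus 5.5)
Your proof is correct and follows the paper's argument: you reduce via Lemma~\ref{lem_schoenbergsTheorem}(a) to conditional negative definiteness of $d^{2H}$, then use the integral representation of $x^{2H}$ (your constant $\Gamma(1-\alpha)/\alpha$ is exactly the paper's $C_\alpha$) together with Lemma~\ref{lem_schoenbergsTheorem}(b) applied to $e^{-t\,d(\cdot,\cdot)}$. Your Steps~1 and~3 only make explicit what the paper leaves implicit, and your optional Step~4 is not part of the paper's proof; on a general metric space $(U,d)$ it would also need a covering-number bound that is not available in general, though such a bound does hold in the intended application, where $U$ is a small neighbourhood in a surface.
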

\begin{proof}
    Let $0<\alpha < 1$. Then one may directly verify using $e^{-u}\geq 1-u$ for all $u\in\R$ that
    \begin{align*}
        C_\alpha := \int_0^\infty \frac{1-e^{-u}}{u^{1+\alpha}} \dd u < \infty.
    \end{align*}
    Evidently, $C_\alpha > 0$. Thus, using the substitution $x\mapsto x/t$, it is immediate that
    \begin{align*}
        t^\alpha = \frac{-1}{C_\alpha} \int_0^\infty \frac{e^{-x t}-1}{x^{1+\alpha}} \dd x
    \end{align*}
    holds for any $t\geq 0$. 
    
    By Schoenberg's theorem (cf.\ first part of Lemma~\ref{lem_schoenbergsTheorem}) it suffices to check that $d(.,.)^{2H}$ is conditionally negative definite. So let $n\in\N$ and $\eta_1,\ldots,\eta_n\in U$ and $\lambda_1,\ldots,\lambda_n\in\R$ with $\lambda_1+\ldots + \lambda_n=0$. Then using the integral representation for $\alpha=2H$ we calculate that
    \begin{align*}
        \sum_{i,j=1}^n \lambda_i \lambda_j d(\eta_i,\eta_j)^{2H}
        & = \sum_{i,j=1}^n \lambda_i \lambda_j \frac{-1}{C_\alpha} \int_0^\infty \frac{e^{-x d(\eta_i,\eta_j)}-1}{x^{1+\alpha}} \dd x\\
        & = \frac{-1}{C_\alpha} \int_0^\infty \sum_{i,j=1}^n \lambda_i \lambda_j \frac{e^{-x d(\eta_i,\eta_j)}-1}{x^{1+\alpha}} \dd x.
    \end{align*}
    Using the existence of the L\'evy Brownian motion, the second part of Schoenberg's theorem (cf.\ Lemma~\ref{lem_schoenbergsTheorem} (b)), the fact that $\lambda_1+\ldots + \lambda_n=0$ and that $C_\alpha>0$ we obtain that the integrand is non-negative, which implies that the whole expression is non-positive. This is what we needed to show.
\end{proof}

Note that Lemma~\ref{lem_existenceLévyBrownianField2D} and Lemma~\ref{lem_BrownianFieldToFractionallevy} together imply Lemma~\ref{lem_mainExistence2dFBM}.

\begin{rem}
    The technique in Lemma~\ref{lem_BrownianFieldToFractionallevy} can be applied for a much broader class of functions. If $F$ is a Bernstein function, cf.\ \cite{SchillingSongVondracek2012}, and if $d(.,.)$ is the structure function of a L\'evy Brownian field then $F(d(.,.))$ is the structure function of a centred Gaussian field, as well, cf.\ \cite[Prop.\ 2.5]{Istas2012}.
\end{rem}

\begin{rem}
    It is unlikely that this approach generalises to higher dimensions, since the measure $\nu_G$ does not generalise to objects other than geodesics, except for hyperplanes in specific cases such as the models spaces $\mcl_\kappa$. The (local) existence of (fractional) L\'evy fields indexed by general Riemannian manifolds in higher dimensions is therefore an interesting open problem. 
\end{rem}

\end{document}